\author{Hideki Miyachi}
\address{Department of Mathematics,
Graduate School of Science, Osaka University,
Machikaneyama 1-1, Toyonaka, Osaka, 560-0043, Japan}
\email{miyachi@math.sci.osaka-u.ac.jp}
\newtheorem{theorem}{Theorem}
\newtheorem{proposition}{Proposition}[section]
\newtheorem{lemma}{Lemma}[section]
\newtheorem{corollary}{Corollary}
\newtheorem{claim}{Claim}[section]
\newtheorem{remark}{Remark}[section]
\newtheorem{question}{Question}
\newtheorem{convention}{Convention}
\newcommand{\ext}{{\rm Ext}}
\newcommand{\cl}[1]{{\rm cl}_{GM}(#1)}
\newcommand{\partialGM}[1]{{\partial_{GM}#1}}
\newcommand{\proj}{{\rm proj}}
\newcommand{\Teich}[2]{\mathcal{T}_{#1,#2}}
\newcommand{\TeichbX}{\mathcal{T}_{g,m}}
\newcommand{\GmInv}{\mathcal{C}_{GM}}
\newcommand{\GmTeich}{\mathcal{T}_{GM}}
\newcommand{\Gmbdy}{\tilde{\partial}_{GM}}
\newcommand{\mf}{\mathcal{MF}}
\newcommand{\pmf}{\mathcal{PMF}}
\newcommand{\extB}{{\mathcal Ext}}
\newcommand{\GmInvC}{{\sf M}\mathcal{C}_{GM}}
\newcommand{\GmTeichC}{{\sf M}\mathcal{T}_{GM}}
\newcommand{\GmbdyC}{{\sf M}\tilde{\partial}_{GM}}
\newcommand{\mfC}{{\sf MF}}
\newcommand{\pmfC}{{\sf MF}_1}
\newcommand{\gromov}[3]{\langle #1\,|\,#2\rangle_{#3}}
\newcommand{\mcg}{{\rm Mod}}
\subjclass{Primary~30F60, 32G15, Secondary~31B15, 51F99, 57M99, 20F38}
\begin{document}

\title[Unification of extremal length geometry]{Unification of extremal length geometry on Teichm\"uller space via
intersection number}

\thanks{The author is partially supported by the Ministry of Education, Science, Sports and Culture, Grant-in-Aid for Scientific Research (C), 21540177.}
\keywords{Teichm\"uller space, Teichm\"uller distance,
Extremal length,
Intersection number,
Gromov product,
Mapping class group}
%\subtitle{Do you have a subtitle?\\ If so, write it here}

%\titlerunning{Unification of extremal length geometry on Teichm\"uller space}        % if too long for running head

%\author{Hideki Miyachi
%        \and
%        Second Author %etc.
%}

%\authorrunning{Short form of author list} % if too long for running head

%\address{Hideki Miyachi,
%Department of Mathematics,
%Graduate School of Science,
%Osaka University,
%Machikaneyama 1-1, Toyonaka, Osaka 560-0043, Japan \\
%              Tel.: +81-06-6850-5326\\
%              Fax: +81-06-6850-5327\\
%%              \email{miyachi@math.sci.osaka-u.ac.jp}           %  \\
%%             \emph{Present address:} of F. Author  %  if needed
%%           \and
%%           S. Author \at
%%              second address
%}
%\email{miyachi@math.sci.osaka-u.ac.jp} 

%\date{Received: date / Accepted: date}
% The correct dates will be entered by the editor

\maketitle

\begin{abstract}
In this paper,
we give a framework for the study of
the extremal length geometry of Teichm\"uller space
after S. Kerckhoff, F. Gardiner and H. Masur.
There is a natural compactification using extremal length geometry introduced
by Gardiner and Masur.
The compactification is realized in a certain projective space.
We develop the extremal length geometry in the cone
which is defined as the inverse image of
the compactification via the quotient mapping.
The compactification is identified with a subset of the cone
by taking an appropriate lift.
The cone contains canonically the space of measured foliations
in the boundary.

We first extend the geometric intersection number
on the space of measured foliations to the cone,
and observe that the restriction of the intersection
number to Teichm\"uller space
is represented explicitly by the formula in terms of
the Gromov product with respect to the Teichm\"uller distance.
From this observation,
we deduce that the Gromov product extends continuously
to the compactification.

As an application,
we obtain an alternative approach to
Earle-Ivanov-Kra-Markovic-Royden's
characterization of isometries.
Namely,
with some few exceptions,
the isometry group of Teichm\"uller space
with respect to the Teichm\"uller distance
is canonically isomorphic to the extended mapping class group.
We also obtain a new realization of Teichm\"uller space,
a hyperboloid model of Teichm\"uller space
with respect to the Teichm\"uller distance.
% \PACS{PACS code1 \and PACS code2 \and more}
%\subjclass{Primary~30F60, 32G15, Secondary~31B15, 51F99, 57M99, 20F38}
\end{abstract}

\tableofcontents

\section{Introduction}

%\subsection{Background}
%\label{subsec:Intro_background}
The \emph{Teichm\"uller distance} is a canonical
and important distance on Teichm\"uller space.
The geometry of the Teichm\"uller distance
is deeply related to
the extremal length geometry on that space
(cf. \cite{MasurSurvey}).
%The aim of this paper is to develop both geometries on Teichm\"uller space.
To the author's knowledge,
 in \cite{Ker},
 S. Kerckhoff first studied
 the boundary of Teichm\"uller space at infinity via
extremal length.
The extremal length geometry on Teichm\"uller space
was formulated precisely by F. Gardiner and H. Masur in \cite{GM}.
%Indeed,
%in \cite{GM},
%They defined a compactification of Teichm\"uller space,
%recently called the \emph{Gardiner-Masur compactification}.
%which is defined by collecting the asymptotic behavior
%of projective classes of extremal lengths of simple closed curves
(cf. \S\ref{subsec:results}).
%(cf. \S\ref{sec:GM_closure}).
%The definition of their compactification is similar
%to the Thurston compactification.
%In fact,
%the Thurston compactification of Teichm\"uller space
%is defined by collecting the asymptotic behavior
%of hyperbolic lengths of simple closed curves (cf. \cite{FLP}).
%Actually,
%Gardiner and Masur observed in \cite{GM} a standard relation between the two boundaries.
%Namely,
%The Gardiner-Masur boundary contains
%% the Thurston boundary
%%in the sense that 
%the space of projective measured foliations. 
%is contained in the Gardiner-Masur boundary.

%By definition,
%any point in the Gardiner-Masur boundary is the projective class of a function
%on the set of homotopy classes of non-trivial and non-peripheral simple closed curves.
%In \cite{Mi1},
%the author observed that any boundary point
%is represented
%by a continuous function on the space of measured foliations.
%while any point of the Thurston boundary
%is represented by the intersection number function.
%Thus,
%the observation in \cite{Mi1} sets our expectations
%that the Gardiner-Masur boundary have properties which are similar to
%those
%in the rich theory of the Thurston boundary.
%
%In this paper,
%we attempt to unify the extremal length geometry
%via intersection number,
%aiming for a counterpart for Bonahon's theory
%on geodesic currents.
%(cf \S\ref{subsubsec:motivation1}).

\subsection{Motivation}
\label{subsec:motivation}
\paragraph{{\bf Unification of Teichm\"uller geometry in terms of intersection number}}
%\label{subsubsec:motivation1}
To define the Thurston compactification of Teichm\"uller space,
we first recognize each point of Teichm\"uller space as a function 
on the set of simple closed curves by assigning
the hyperbolic lengths of geodesic representatives,
and then,
we take the closure of the set of projective classes
of such functions in the projective space
(cf. \cite{FLP}).
In a broad sense,
completions due to Thurston carry out with recognizing each point of
Teichm\"uller space as a function on the set of simple closed curves
(see also \cite{DLR}).
%The Gardiner-Masur compactification is defined by the same manner
%as the Thurston compactification
%by considering the square root of extremal length instead of
%the hyperbolic length
%(cf. \eqref{eq:GM-embedding}).
Hence,
the Gardiner-Masur compactification is considered
as an object in the category ``Thurston's completion"
(cf. \eqref{eq:GM-embedding}).
Thus,
it is expected
%as mentioned in the previous section,
that every boundary point of the Gardiner-Masur compactification
is recognized as the projective class of a function
%on the set of simple closed curves
defined by (a kind of) intersection number.

In \cite{Bonahon_currents}, F. Bonahon realized the Thurston compactification
in the space of geodesic currents.
Indeed, in his method,
any point of Teichm\"uller space is associated to an equivariant Radon measure
on the space of hyperbolic geodesics on the universal cover of the base surface
of Teichm\"uller space.
He extended the intersection number function
to the space of geodesic currents,
and gave a unified treatment for the Thurston compactification
in terms of the intersection number.
His theory is broadly applied in many fields in mathematics
and yields enormous rich results
(cf. e.g.  \cite{Bonahon_bouts} and \cite{DLR}).
Thus,
it is natural to ask:
\begin{question}
\label{question:1}
Can we develop
extremal length geometry in terms of intersection number?
\end{question}

\paragraph{{\bf Relation to the geometry of the Teichm\"uller distance}}
As discussed in the previous section,
the space $\mathbb{R}_+^{\mathcal{S}}$ of non-negative functions
on the set $\mathcal{S}$ of simple closed curves
is the ambient space of Thurston's completions.
The interior $(0,\infty)^{\mathcal{S}}$ of
$\mathbb{R}_+^{\mathcal{S}}$
admits a pseudo-distance
\begin{equation}
\label{eq:product_metrics_RS}
d_\infty(f,g)=\log \sup_{\alpha\in \mathcal{S}}
\left\{
\frac{f(\alpha)}{g(\alpha)},
\frac{g(\alpha)}{f(\alpha)}
\right\}
\end{equation}
which is perceived as
the product distance of countably many $1$-dimensional hyperbolic spaces.
Possibly $d_\infty(f,g)=\infty$ for some $f,g\in \mathbb{R}_+^{\mathcal{S}}$
and 
the topology from \eqref{eq:product_metrics_RS}
is different from
the product topology on $\mathbb{R}_+^{\mathcal{S}}$.
From Kerckhoff's formula \eqref{eq:Kerckhoff_formula},
%the Teichm\"uller distance is
%represented by the ratios of extremal lengths.
%Moreover,
a natural lift given in
\eqref{eq:GM_embedding_lift}
of the Gardiner-Masur embedding gives an isometric embedding
from Teichm\"uller space to the ambient space
$((0,\infty)^{\mathcal{S}},d_\infty)$.
One may ask:
%Hence,
%it is quite natural to expect that
%any geometric property of Teichm\"uller distance
%has an effect on
%the geometric structure of the Gardiner-Masur compactification,
%and vice versa.

\begin{question}
\label{question:2}
How is the geometry of Teichm\"uller distance
related to the geometry of the Gardiner-Masur compactification
(embedding)?
\end{question}

%Notice that when we consider the hyperbolic length instead of the extremal length,
%we obtain the Thurston embedding of $\Teich{g}{m}$
%and the induced distance from \eqref{eq:product_metrics_RS}
%is nothing but the Thurston's Lipschitz metric.

\subsection{Results}
\label{subsec:results}
In this paper,
we attempt to unify the extremal length geometry
via intersection number,
aiming for a counterpart for Bonahon's theory
on geodesic currents.

%%The purpose of this paper is 
%%to develop the extremal length geometry
%%on Teichm\"uller space
%%after Kerckhoff,
%%Gardiner and Masur.
%%Indeed,
%In this paper,
%we attempt to unify the extremal length geometry
%via intersection number,
%aiming for a counterpart for Bonahon's theory.

%Indeed,
%we will realize the Gardiner-Masur compactification in the cone $\GmInv$
%in the space of non-negative function on the set of simple closed curves.
%The cone $\GmInv$ contains the space of measured foliations
%(cf. \S\ref{sec:cones}).
%As a result,
%we have a Teichm\"uller theoritic description of the geometric intersection
%number of measured foliations.
%Indeed,
%the intersection number appears as the ``degeneration" of the Gromov product
%(cf. Corollary \ref{coro:Gromov_product_boundary}).

%\subsubsection{Notation}
We fix the notation to give our results precisely.
Henceforth,
we fix a Riemann surface $X=X_{g,m}$ 
of genus $g$ with $m$ punctures such that $2g-2+m>0$.
Denote by $\TeichbX$ the Teichm\"uller space of $X$.
%and by $\cl{\TeichbX}$ and $\partialGM{\TeichbX}$
%the Gardiner-Masur compactification and the Gardiner-Masur boundary,
%respectively
%(cf. \S\ref{subsec:teichmuller-space} and \S\ref{sec:GM_closure}).
%It is known that the geometric structure of the Teichm\"uller space
%is independent of the choice of basepoints.
%Especially,
%the statement of our Theorem \ref{thm:main_realization_0}
%below
%does not depend on the basepoint.
%However,
%we will give a discussion 
When the argument depends on the basepoint,
%we denote by $\Teich{g}{m}$ the Teichm\"uller space
%of Riemann surfaces of type $(g,m)$
%for simplicity.
%Namely,
we consider the Teichm\"uller space
$\TeichbX$ as a pointed space
$(\Teich{g}{m},x_0)$,
where $x_0=(X,id)$.

Let $\mathcal{S}$ be the set of non-peripheral and
non-trivial simple closed curves on $X$,
and $\mf$ the space of measured foliations.
The space $\mf$ is contained in
$\mathbb{R}_+^{\mathcal{S}}$
(cf. \S\ref{subsec:measured_foliation}).

We refer readers to \S\ref{sec:GM_closure}
for details on the Gardiner-Masur closure.
We consider the cone $\GmInv$
which is defined as the inverse image
of the Gardiner-Masur closure $\cl{\Teich{g}{m}}$ via the projection
$\mathbb{R}_+^{\mathcal{S}}-\{0\}\to {\rm P}\mathbb{R}_+^{\mathcal{S}}$
(cf. \S\ref{subsec:cones}).
It is known that  the space
$\pmf$ of projective measured foliations
is contained in
the Gardiner-Masur boundary
$\partialGM{\Teich{g}{m}}$
and hence $\mf\subset \GmInv$ (cf. \cite{GM}).
%because $\pmf\subset \partialGM{\Teich{g}{m}}$
%contains the space $\pmf$ of projective measured foliations
%as noted in \S\ref{subsec:Intro_background}.
One of our aims in this paper is to define
the intersection number function on $\GmInv$.
In order to avoid any confusion,
we denote by $I(\,\cdot\,,\,\cdot\,)$ the original geometric intersection number function on $\mf$.

%%% Add at March 22, 2012
\subsubsection{Unification by intersection number}
The Gardiner-Masur embedding \eqref{eq:GM-embedding}
admits a natural lift
\begin{equation}
\label{eq:GM_embedding_lift}
\tilde{\Phi}_{GM}\colon
\Teich{g}{m}\ni y\mapsto
[\mathcal{S}\ni \alpha\mapsto \ext_y(\alpha)^{1/2}]
\in \GmInv\subset \mathbb{R}_+^\mathcal{S}.
\end{equation}
%Under the embedding $\tilde{\Phi}_{GM}$,
%the intersection number in Theorem \ref{thm:main_realization}
%is rephrased as the following \emph{basepoint-free representation}.
Our unification is stated as follows.
%
%\begin{theorem}[Intrinsic representation]
%The intersection number $i(\,\cdot\,,\,\cdot\,)\colon \GmInv\times \GmInv\to \mathbb{R}$
%satisfies 
%$$
%i(\tilde{\Phi}_{GM}(y),\tilde{\Phi}_{GM}(z))=\exp(d_T(y,z))
%$$
%for all $y,z\in \TeichbX$.
%\end{theorem}
%%%%%

\begin{theorem}[Unification]
\label{thm:main_realization_0}
There is a unique continuous function
$$
i(\,\cdot\,,\,\cdot\,)\colon \GmInv\times \GmInv\to \mathbb{R}
$$
with the following properties.
\begin{itemize}
\item[{\rm (i)}]
For any $y\in \Teich{g}{m}$,
the projective class of
the function $\mathcal{S}\ni\alpha\mapsto i(\tilde{\Phi}_{GM}(y),\alpha)$
is exactly the image of $y$ under the Gardiner-Masur embedding.
Actually,
it holds
$$
 i(\tilde{\Phi}_{GM}(y),\alpha)=\ext_y(\alpha)^{1/2}
$$
for all $\alpha\in \mathcal{S}$.
\item[{\rm (ii)}]
For $\mathfrak{a},\mathfrak{b}\in \GmInv$,
$i(\mathfrak{a},\mathfrak{b})=i(\mathfrak{b},\mathfrak{a})$.
\item[{\rm (iii)}]
For $\mathfrak{a},\mathfrak{b}\in \GmInv$ and $t,s\ge 0$,
$i(t\mathfrak{a},s\mathfrak{b})=ts\,i(\mathfrak{a},\mathfrak{b})$.
\item[{\rm (iv)}]
For any $y,z\in \Teich{g}{m}$,
$$
i(\tilde{\Phi}_{GM}(y),\tilde{\Phi}_{GM}(z))=\exp(d_T(y,z)).
$$
In particular,
we have $i(\tilde{\Phi}_{GM}(y),\tilde{\Phi}_{GM}(y))=1$
for $y\in \Teich{g}{m}$.
\item[{\rm (v)}]
For $F,G\in \mf\subset \GmInv$,
the value $i(F,G)$ is equal to the geometric intersection number
$I(F,G)$.
\end{itemize}
\end{theorem}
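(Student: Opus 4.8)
\emph{Strategy.} The plan is to reduce everything to a dense subset together with the single bridge identity coming from Kerckhoff's formula \eqref{eq:Kerckhoff_formula}, namely
\[
\exp(d_T(y,z))=\sup_{\alpha\in\mathcal{S}}\frac{\ext_z(\alpha)^{1/2}}{\ext_y(\alpha)^{1/2}}
\qquad(y,z\in\Teich{g}{m}).
\]
Uniqueness is the easy half: properties (iii) and (iv) force $i(t\tilde{\Phi}_{GM}(y),s\tilde{\Phi}_{GM}(z))=ts\,\exp(d_T(y,z))$ for all $t,s>0$ and $y,z\in\Teich{g}{m}$. Since the Gardiner--Masur embedding is injective \cite{GM}, a point $t\tilde{\Phi}_{GM}(y)$ determines the pair $(t,y)$, so this prescribes $i$ on $\mathbb{R}_{>0}\tilde{\Phi}_{GM}(\Teich{g}{m})\times\mathbb{R}_{>0}\tilde{\Phi}_{GM}(\Teich{g}{m})$. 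As $\GmInv=\pi^{-1}(\cl{\Teich{g}{m}})$ (for the projection $\pi\colon\mathbb{R}_+^{\mathcal{S}}\setminus\{0\}\to{\rm P}\mathbb{R}_+^{\mathcal{S}}$) is the cone over the closure $\cl{\Teich{g}{m}}$ of the image of the embedding, while $\mathbb{R}_{>0}\tilde{\Phi}_{GM}(\Teich{g}{m})$ is the cone over that image, the latter is dense in $\GmInv$ (the apex being a limit). Hence any two continuous functions satisfying (iii),(iv) agree everywhere, and the entire content is \emph{existence}, i.e.\ that this prescription extends continuously.

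\emph{Existence and the Gromov product (main step).} I would define $i$ first on the dense set by the formula above and then extend by continuity. The obstacle is that the supremum on the right is not continuous up to the boundary; to control it I would pass to the Gromov product. Writing $\exp(d_T(y,z))=\exp(d_T(x_0,y))\exp(d_T(x_0,z))\exp(-2\gromov{y}{z}{x_0})$ and using the normalized functions $\mathcal{E}_y(\alpha)=\ext_y(\alpha)^{1/2}/\exp(d_T(x_0,y))$, which satisfy $0\le\mathcal{E}_y(\alpha)\le\ext_{x_0}(\alpha)^{1/2}$ by Kerckhoff's formula and hence range over a compact subset of $\mathbb{R}_+^{\mathcal{S}}$ whose closure is exactly the Gardiner--Masur closure, the key is to establish a formula expressing $\exp(2\gromov{y}{z}{x_0})$ as a quantity manifestly continuous in the pair $(\mathcal{E}_y,\mathcal{E}_z)$ for the product topology. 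Granting such a formula (which is where the explicit evaluation of the Gromov product enters), one shows that whenever $t_n\tilde{\Phi}_{GM}(y_n)\to\mathfrak{a}$ and $s_n\tilde{\Phi}_{GM}(z_n)\to\mathfrak{b}$ in $\GmInv$, the values $t_ns_n\exp(d_T(y_n,z_n))$ converge to a limit depending only on $(\mathfrak{a},\mathfrak{b})$, yielding both the continuous extension and, as a by-product, that $\gromov{\cdot}{\cdot}{x_0}$ extends continuously to $\cl{\Teich{g}{m}}$. \textbf{I expect this convergence --- its existence and independence of the approximating sequences, especially for pairs of boundary points in $\partialGM{\Teich{g}{m}}\times\partialGM{\Teich{g}{m}}$ --- to be the genuine difficulty,} since it demands a uniform rather than a pointwise estimate and leans essentially on the compactness of the normalized image.

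\emph{Verifying the remaining properties.} Properties (ii) and (iii) are immediate: $d_T$ is symmetric, so $i$ is symmetric on the dense set and hence everywhere by continuity, while homogeneity is built into the defining prescription and survives the continuous extension. For (i), I would realize $\alpha\in\mathcal{S}$ (and more generally $F\in\mf$) as a limit $t_n\tilde{\Phi}_{GM}(z_n)\to\alpha$ in $\GmInv$, where the classes $[z_n]$ tend in $\pmf\subset\partialGM{\Teich{g}{m}}$ to the projective class of $\alpha$; then $i(\tilde{\Phi}_{GM}(y),\alpha)=\lim_n t_n\exp(d_T(y,z_n))$, and evaluating this via Kerckhoff's formula together with the standard asymptotics of extremal length along a sequence degenerating to $\alpha$ recovers $\ext_y(\alpha)^{1/2}$. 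Property (v) is the most delicate of these: for $F,G\in\mf$ one approximates both by such sequences and must check that the resulting double limit of $\exp(d_T(\cdot,\cdot))$ equals the geometric intersection number $I(F,G)$. Here I would use that $I$ is the continuous extension of $(\alpha,\beta)\mapsto I(\alpha,\beta)$ from $\mathcal{S}$, together with Gardiner's formula relating $\ext_y(F)$ to $I(F,\cdot)$ and the behaviour of the Teichm\"uller (Jenkins--Strebel) geodesics whose horizontal and vertical foliations are $F$ and $G$, so that the distance asymptotics produce precisely $I(F,G)$.
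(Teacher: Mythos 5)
Your outline matches the paper's architecture---uniqueness via density of the cone $\mathbb{R}_{>0}\tilde{\Phi}_{GM}(\Teich{g}{m})$ in $\GmInv$ together with (iii), (iv) and continuity, existence as the real content, and the Gromov product as the quantity to be controlled---but the existence argument has a genuine gap: the step you introduce with ``Granting such a formula'' \emph{is} the theorem. No formula is available that expresses $\exp(-2\gromov{y}{z}{x_0})$ as a manifestly continuous function of the pair $(\mathcal{E}_y,\mathcal{E}_z)$ in the product topology; what Kerckhoff's formula gives is
$$
\exp(-2\gromov{y}{z}{x_0})
=\left\{\frac{1}{K_y}\,\sup_{F\in\mf-\{0\}}\frac{\mathcal{E}_z(F)^2}{\ext_y(F)}\right\}^{1/2},
$$
which involves the unbounded normalization $K_y=\exp(2d_T(x_0,y))$ and a supremum of ratios that is in general only semicontinuous, so continuity up to $\partialGM{\TeichbX}\times\partialGM{\TeichbX}$ cannot be read off from it. Closing this gap is what occupies \S\S\ref{sec:extremal_length_GmInv}--\ref{sec:extension_intersection_number} of the paper, and it rests on two inputs your sketch does not supply: (a) the prior result (Proposition \ref{prop:continuity_E_p}, from \cite{Mi1}, \cite{Mi4}) that each boundary point $p$ is represented by a function $\mathcal{E}_p$ on $\mf$ with $(p,F)\mapsto\mathcal{E}_p(F)$ \emph{jointly} continuous, which is what permits defining $i_{x_0}$ against measured foliations and the extremal length $\extB^{x_0}_y$ on the whole cone; and (b) the equicontinuity of the family $\{\mathcal{E}_y\}_{y\in\TeichbX}$ on $\GmInvC$ (Proposition \ref{prop:equicontinuity}, proved via the neighborhoods $U_\delta$ and compactness of bounded sets), which feeds an Arzel\`a--Ascoli argument; subsequential limits are then pinned down by the symmetry $\mathcal{E}_y(z)=\mathcal{E}_z(y)$ evaluated on the dense set $\GmTeichC$ (cf.\ \eqref{eq:limit_on_GmTeich}). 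Your appeal to ``compactness of the normalized image'' gestures at this, but compactness of $\cl{\TeichbX}$ alone does not give the required uniform control; equicontinuity is the missing idea, and without it the independence of the limit from the approximating sequences (which you correctly flag as the crux) is unproven.

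A second, smaller gap concerns (i) and (v): you propose to obtain them from double limits of $t_ns_m\exp(d_T(y_n,z_m))$ along sequences degenerating to $[F]$ and $[G]$, citing distance asymptotics along Jenkins--Strebel geodesics. Establishing that such doubly rescaled distances converge and that the limit equals $I(F,G)$ is essentially equivalent in difficulty to the extension problem itself, so this is circular as stated. The paper gets both properties for free from the construction: $\mathcal{E}_p$ satisfies (E1) (its projective class is $p$), which yields (i) after rescaling by $\exp(d_T(x_0,y))$, and for $p=[G]\in\pmf$ one has $\mathcal{E}_{[G]}(F)=I(F,G)/\ext_{x_0}(G)^{1/2}$ (equation \eqref{eq:E-p-For-measured-foliation}), which yields (v) via Proposition \ref{prop:intersection_MF}. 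In short, the known structure of the boundary functions does the work that your limit computations would have to redo from scratch.
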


%\subsubsection{Unification by intersection number with basepoint}
For a technical reason,
instead of proving Theorem \ref{thm:main_realization_0},
we will show Theorem \ref{thm:pairing_intrinsic},
which is the \emph{basepoint dependent} version
%which is a paraphrase 
of Theorem \ref{thm:main_realization_0}
(cf.  \S\ref{sec:extension_intersection_number}).
Actually,
we will consider another lift
%$\Psi_{x_0}$
%in
%the  version
%(Theorem \ref{thm:pairing_intrinsic}) by
%the continuous extension of the embedding
\begin{equation}
\label{eq:Lift_GM}
\Psi_{x_0}\colon
\Teich{g}{m}\ni y\mapsto 
\left[
\mathcal{S}\ni\alpha
\mapsto 
\exp(-d_T(x_0,y))\cdot
\ext_y(\alpha)^{1/2}
\right]
\in \GmInv
\end{equation}
of the Gardiner-Masur embedding
in Theorem \ref{thm:pairing_intrinsic}
in place of $\tilde{\Phi}_{GM}$.
Namely,
\begin{equation}
\label{eq:Psi_and_Phi}
\Psi_{x_0}(y)=\exp(-d_T(x_0,y))\cdot \tilde{\Phi}_{GM}(y)
\end{equation}
for all $y\in \Teich{g}{m}$.
%The embedding \eqref{eq:Lift_GM}
%is clearly a lift of the Gardiner-Masur embedding \eqref{eq:GM-embedding}
%of
%$\Teich{g}{m}$.
%$\TeichbX=\Teich{g}{m}$.
One of advantages to use the embedding $\Psi_{x_0}$
is that $\Psi_{x_0}$
admits a continuous extension to $\cl{\TeichbX}$,
whereas $\tilde{\Phi}_{GM}$ diverges at infinity
(cf. Proposition \ref{prop:continuity_E_p} and \eqref{eq:Lift_GM}).

%One can easily deduce Theorem \ref{thm:main_realization_0}
%from
%Theorem \ref{thm:pairing_intrinsic}
%Theorem \ref{thm:main_realization}.

\subsubsection{Hyperboloid model of Teichm\"uller space}
\label{subsubsec:hyperboliod_model}
We represent the situations of our theorems schematically in Figure \ref{fig:cone}.
For any $y\in \TeichbX$,
$\tilde{\Phi}_{GM}(y)$ and $\Psi_{x_0}(y)$ are projectively equivalent
in $\mathbb{R}_+^{\mathcal{S}}$.
From (iv) in Theorem \ref{thm:main_realization_0},
the image under $\tilde{\Phi}_{GM}$ coincides with
the ``hyperboloid"
\begin{equation}
\label{eq:hyperboloid_model_T}
\{\mathfrak{a}\in \GmInv\mid i(\mathfrak{a},\mathfrak{a})=1\},
\end{equation}
and the boundary of the cone $\GmInv$ is represented as
the ``light cone"
\begin{equation}
\label{eq:light_cone_T}
\{\mathfrak{a}\in \GmInv\mid i(\mathfrak{a},\mathfrak{a})=0\}
\end{equation}
from (iii) and (iv) in Theorem \ref{thm:main_realization_0}
and the continuity of the intersection number on $\GmInv$.
The image of $\Psi_{x_0}$ looks like a section in the cone.
\begin{figure}
\begin{center}
\includegraphics[height=5cm]{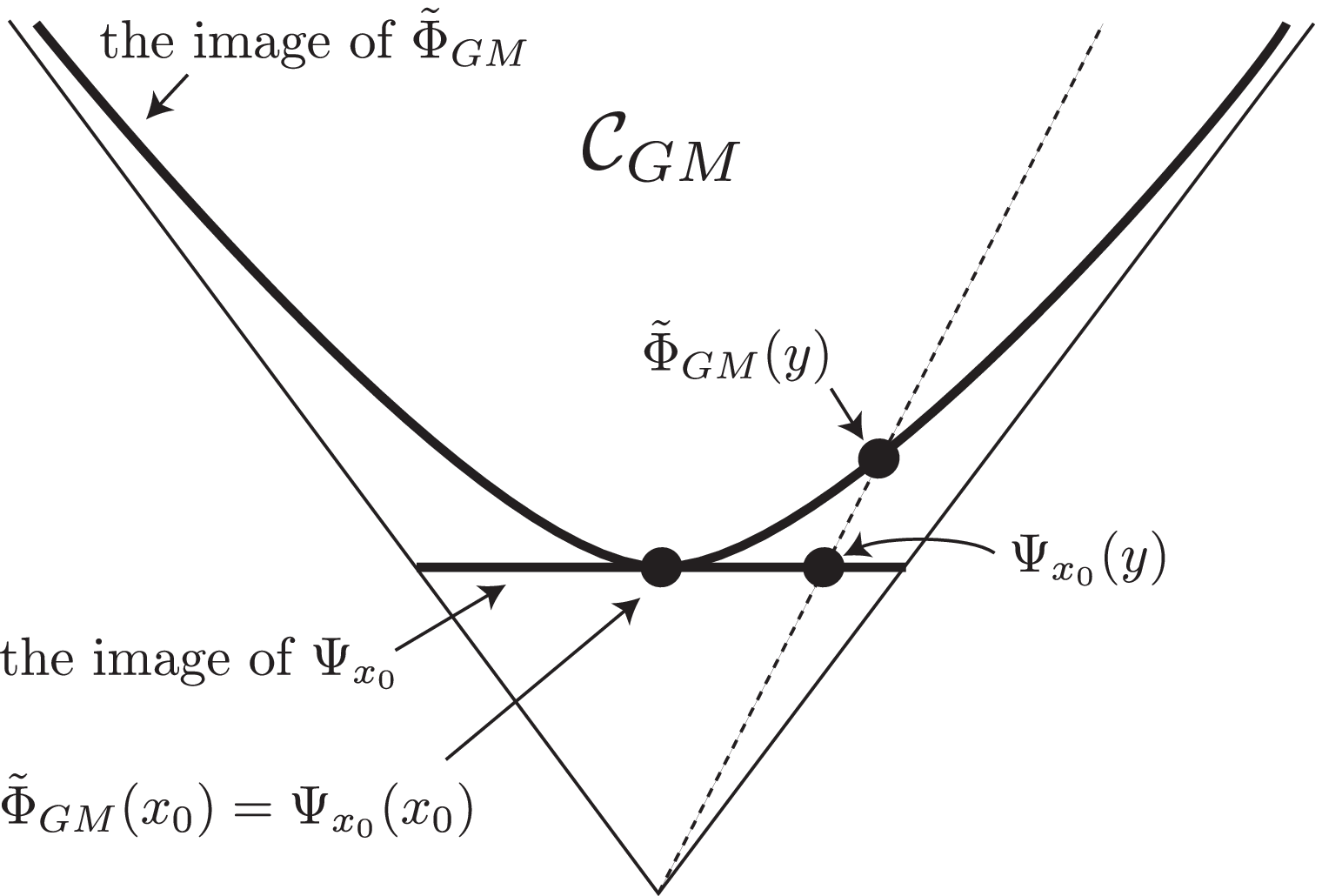}
\caption{Cone $\GmInv$ and the images of $\tilde{\Phi}_{GM}$ and $\Psi_{x_0}$.}
\label{fig:cone}
\end{center}
\end{figure}
These images contact only at the images of the basepoint.
%(cf. Proposition \ref{prop:hyperboloid_model}).
In the hyperboloid model,
the Teichm\"uller distance
is represented as
$$
d_T(y,z)=\log\,i(\tilde{\Phi}_{GM}(y),\tilde{\Phi}_{GM}(z)).
$$
%from (iv) of Theorem \ref{thm:main_realization_0}.
%from (iv)
%of \eqref{eq:exchange}.
%
%From this standpoint,
%the image of $\Psi_{x_0}$ seems to be the ``Kleinian model"
%of Teichm\"uller space
%(cf. \S2.3 of \cite{Thurston_book}).

This hyperboloid model might be a comparable object with Bonahon's realization
of the Thurston compactification
of Teichm\"uller space in the space of geodesic currents
(cf. \S4 in \cite{Bonahon_currents}).

\subsubsection{Extension of the Gromov product}
%From Theorem \ref{thm:main_realization_0},
%we conclude the following,
The following corollary confirms that the Gardiner-Masur boundary
is a kind of a canonical boundary for the geometry of the Teichm\"uller distance.

\begin{corollary}[Extension of the Gromov product for $d_T$]
\label{coro:Gromov_product_boundary}
For any $x_0\in \Teich{g}{m}$,
there is a unique continuous function
$$
\gromov{\cdot}{\cdot}{x_0}
\colon
\cl{\Teich{g}{m}}\times \cl{\Teich{g}{m}}
\to [0,+\infty]
$$
such that
\begin{itemize}
\item[{\rm (1)}]
for $y,z\in \Teich{g}{m}$,
$$
\gromov{y}{z}{x_0}
=\frac{1}{2}
(d_T(x_0,y)+d_T(x_0,z)-d_T(y,z)),
$$
\item[{\rm (2)}]
for $[F],[G]\in \pmf\subset \partialGM{\Teich{g}{m}}$,
$$
\exp(-2\gromov{[F]}{[G]}{x_0})=
\frac{I(F,G)}{\ext_{x_0}(F)^{1/2}\cdot \ext_{x_0}(G)^{1/2}}. 
$$
\end{itemize}
\end{corollary}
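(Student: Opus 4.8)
The plan is to build the extended Gromov product directly out of the intersection number $i$ of Theorem \ref{thm:main_realization_0} together with the lift $\Psi_{x_0}$, exploiting the fact that $\Psi_{x_0}$---unlike $\tilde{\Phi}_{GM}$---extends continuously to the whole Gardiner-Masur closure (Proposition \ref{prop:continuity_E_p}). First I would record the basic identity on the interior: for $y,z\in\Teich{g}{m}$, combining \eqref{eq:Psi_and_Phi} with the homogeneity (iii) and the distance formula (iv) of Theorem \ref{thm:main_realization_0} gives
\[
i(\Psi_{x_0}(y),\Psi_{x_0}(z))
=\exp\bigl(-d_T(x_0,y)-d_T(x_0,z)\bigr)\,i(\tilde{\Phi}_{GM}(y),\tilde{\Phi}_{GM}(z))
=\exp\bigl(-2\gromov{y}{z}{x_0}\bigr).
\]
Since the Gromov product is nonnegative on $\Teich{g}{m}$, the right-hand side is $(0,1]$-valued there.

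I would then \emph{define} the extension by $\gromov{p}{q}{x_0}:=-\tfrac{1}{2}\log\, i(\Psi_{x_0}(p),\Psi_{x_0}(q))$ for $p,q\in\cl{\Teich{g}{m}}$. Continuity is immediate: $\Psi_{x_0}$ is continuous on $\cl{\TeichbX}$ and $i$ is continuous on $\GmInv\times\GmInv$, so on the compact space $\cl{\TeichbX}\times\cl{\TeichbX}$ the composite $i(\Psi_{x_0}(\cdot),\Psi_{x_0}(\cdot))$ takes values in the closure $[0,1]$ of its interior range; hence $-\tfrac12\log$ of it lands in $[0,+\infty]$ and is continuous as a map into the two-point-compactified half line. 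Property (1) is the displayed identity, and uniqueness follows because $\Teich{g}{m}$ is dense in $\cl{\Teich{g}{m}}$, so a continuous function has at most one continuous extension.

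The crux is property (2), which amounts to pinning down the value of the continuous extension of $\Psi_{x_0}$ at a class $[F]\in\pmf$. Since $[F]$ lies in $\partialGM{\Teich{g}{m}}$ with Gardiner-Masur class equal to that of $F\in\mf\subset\GmInv$, the limit $\Psi_{x_0}([F])$ must be a positive multiple $c(F)\,F$ of $F$ in the cone. To compute $c(F)$ I would test against the basepoint: for $z_n\to[F]$ the interior identity with $y=x_0$ gives $i(\Psi_{x_0}(x_0),\Psi_{x_0}(z_n))=\exp(-2\gromov{x_0}{z_n}{x_0})=1$ for every $n$, while continuity together with $\Psi_{x_0}(x_0)=\tilde{\Phi}_{GM}(x_0)$ and the extension of (i) from $\mathcal{S}$ to $\mf$ (via density of weighted simple closed curves and continuity of $i$ and $\ext_{x_0}$) yields $i(\Psi_{x_0}(x_0),\Psi_{x_0}([F]))=c(F)\,i(\tilde{\Phi}_{GM}(x_0),F)=c(F)\,\ext_{x_0}(F)^{1/2}$. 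Hence $c(F)=\ext_{x_0}(F)^{-1/2}$, i.e. $\Psi_{x_0}([F])=\ext_{x_0}(F)^{-1/2}F$, and property (2) drops out using homogeneity (iii) and property (v):
\[
\exp\bigl(-2\gromov{[F]}{[G]}{x_0}\bigr)
=i(\Psi_{x_0}([F]),\Psi_{x_0}([G]))
=\frac{i(F,G)}{\ext_{x_0}(F)^{1/2}\ext_{x_0}(G)^{1/2}}
=\frac{I(F,G)}{\ext_{x_0}(F)^{1/2}\ext_{x_0}(G)^{1/2}}.
\]
The main obstacle is the identification of $\Psi_{x_0}([F])$: one must know that the continuous extension at a projective measured foliation is a genuine scalar multiple of $F$ and that property (i) passes from simple closed curves to all of $\mf$. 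Both rest on the continuity of $\Psi_{x_0}$ and the density of weighted simple closed curves in $\mf$, which I would invoke from the setup preceding Theorem \ref{thm:main_realization_0}.
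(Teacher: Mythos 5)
Your proposal is correct and takes essentially the same route as the paper: the paper likewise defines the extension by $\gromov{p}{q}{x_0}=-\frac{1}{2}\log i_{x_0}(p,q)$ on $\cl{\TeichbX}$ embedded via $\Psi_{x_0}$ (equation \eqref{eq:new-pairing}), gets continuity and the range $[0,+\infty]$ from continuity of the extended intersection number together with the bound $i_{x_0}(p,q)\le 1$ (which it deduces from Proposition \ref{prop:bdd_implies_compact} and Corollary \ref{coro:minsky_inequality_extention} rather than your density argument), proves (1) via (4) of Proposition \ref{prop:pairing}, and obtains (2) from the identification $\Psi_{x_0}([F])=\ext_{x_0}(F)^{-1/2}F$ combined with $i=I$ on $\mf\times\mf$. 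The only real difference is that you re-derive that normalization by testing against the basepoint, whereas the paper already has it available as \eqref{eq:psi_mfC_to_MF} (equivalently \eqref{eq:E-p-For-measured-foliation}, a consequence of Convention \ref{convention:2} and the sharpness of Minsky's inequality), so your extra step is a correct but redundant re-proof of an established fact.
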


The conclusion in
Corollary \ref{coro:Gromov_product_boundary} is somewhat surprising
because Teichm\"uller space with the Teichm\"uller distance
is believed to be a metric space with less ``good natures"
for geodesic triangles.
For instance,
%it was shown that
Teichm\"uller space is neither a metric space with
Busemann negative curvature
nor a Gromov hyperbolic space
(cf. \cite{Masur0}, \cite{MW2} and \cite{MP}).
Recently,
C. Walsh informed that there is a geodesic metric space
with the property that the Gromov product 
does not extend to the horofunction boundary
(cf. \S\ref{sec:non-extendability-GromovProduct}).

\subsubsection{Rigidity theorem for mappings of bounded distortion for triangles}
\label{subsubsec:almost_isometry}
Our unified treatment of extremal length geometry
in terms of intersection number
links the geometry of the Teichm\"uller distance
(an analytical aspect in Teichm\"uller theory)
with the geometry on $\mf$ via intersection number
(a topological aspect in Teichm\"uller theory).
%
%As an application,
%we will observe a rigidity theorem for
%certain mappings on the boundary of $\Teich{g}{m}$ at infinity,
%which is a kind of a generalization of
%Earle-Ivanov-Kra-Markovic-Royden's characterization of the action of isometries
%(cf. \cite{EK}, \cite{EM}, \cite{Ivanov} and \cite{Royden}).

We will deal with a \emph{mapping of bounded distortion for triangles}
which is defined as a mapping $\omega:\Teich{g}{m}\to \Teich{g}{m}$
satisfying
%$$
%|d_T(\omega(x),\omega(y))-d_T(x,y)|\le D
%$$
$$
\frac{1}{D_1}\gromov{x}{y}{z}-D_2
\le
\gromov{\omega(x)}{\omega(y)}{\omega(z)}
\le
D_1
\gromov{x}{y}{z}+D_2
$$
for all $x,y,z\in \Teich{g}{m}$ and some constants $D_1,D_2>0$ independent
of the choice of points of $\Teich{g}{m}$.
A mapping $\omega'\colon \Teich{g}{m}\to \Teich{g}{m}$ is said to be
a \emph{quasi-inverse} of a mapping $\omega\colon \Teich{g}{m}\to \Teich{g}{m}$ if 
there is a constant $D_3>0$ such that
$$
\sup_{x\in \Teich{g}{m}}
\{d_T(x,\omega\circ \omega'(x)),d_T(x,\omega'\circ \omega(x))\}\le D_3.
$$
One can easily check that
any quasi-inverse $\omega'$ of $\omega$ is also a mapping
of bounded distortion for triangles.
In \S\ref{subsec:proof_of_theorem_characterization},
%we obtain a rigidity theorem for
%the action on $\pmf$ of a mapping of bounded distortion for triangles.
%More precisely,
we prove the following.

\begin{theorem}[Asymptotic Rigidity]
\label{thm:almost_isometry}
Suppose that the complex dimension of $\Teich{g}{m}$ is at least two.
Let $\omega\colon \Teich{g}{m}\to \Teich{g}{m}$ be a mapping
of bounded distortion for triangles.
Assume the following two conditions:
\begin{itemize}
\item[{\rm (a)}]
The map $\omega$ admits a continuous extension to $\partialGM{\Teich{g}{m}}$.
\item[{\rm (b)}]
The map $\omega$ has a quasi-inverse $\omega'$ which admits a continuous extension
to $\partialGM{\Teich{g}{m}}$.
\end{itemize}
Then,
the following hold:
\begin{itemize}
\item[{\rm (1)}]
The map $\omega$ acts homeomorphically on
$\pmf\subset \partialGM{\Teich{g}{m}}$
and $\omega'=\omega^{-1}$ on $\pmf$.
\item[{\rm (2)}]
The restriction of $\omega$ to $\pmf$
preserves $\mathcal{S}$ and induces a simplicial automorphism of
the complex of curves.
\end{itemize}
\end{theorem}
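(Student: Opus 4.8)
The plan is to push every hypothesis out to the Gardiner--Masur boundary and then read it off through the intersection number via Corollary \ref{coro:Gromov_product_boundary}. Write $\bar\omega,\bar\omega'\colon\cl{\Teich{g}{m}}\to\cl{\Teich{g}{m}}$ for the continuous extensions furnished by (a) and (b).

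\textbf{Step 1 (passing the distortion to the boundary).} Fix the interior basepoint $x_0$. Given $\xi,\eta\in\partialGM{\Teich{g}{m}}$, choose $y_n\to\xi$ and $z_n\to\eta$ in $\Teich{g}{m}$, apply the defining inequality with basepoint $x_0$, and let $n\to\infty$; the continuity of $\omega$ together with the continuity of the extended Gromov product gives
$$
\frac{1}{D_1}\gromov{\xi}{\eta}{x_0}-D_2
\le
\gromov{\bar\omega(\xi)}{\bar\omega(\eta)}{\omega(x_0)}
\le
D_1\gromov{\xi}{\eta}{x_0}+D_2,
$$
and likewise for $\bar\omega'$. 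The crucial consequence is that the relation $\gromov{\xi}{\eta}{x_0}=+\infty$ is preserved in both directions (using $\bar\omega'$ for the converse). By the formula in Corollary \ref{coro:Gromov_product_boundary} this relation is, on $\pmf$, exactly the \emph{vanishing} $I(F,G)=0$ of the geometric intersection number, and it is independent of the interior basepoint, so the basepoint change $x_0\mapsto\omega(x_0)$ is harmless.

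\textbf{Step 2 ($\bar\omega,\bar\omega'$ are mutually inverse homeomorphisms of the boundary).} From the quasi-inverse condition $\sup_x d_T(x,\omega'\circ\omega(x))\le D_3$ I would deduce that $\bar\omega'\circ\bar\omega=\bar\omega\circ\bar\omega'=\mathrm{id}$ on $\partialGM{\Teich{g}{m}}$. The mechanism is that a self-map at bounded $d_T$-distance from the identity which extends continuously to $\cl{\Teich{g}{m}}$ must extend to the identity on the boundary; I would verify this from the Lipschitz control $\lvert\log\ext_x(\alpha)-\log\ext_y(\alpha)\rvert\le 2d_T(x,y)$ of the defining coordinates, which shows that a bounded perturbation has the same Gardiner--Masur limit. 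Thus $\bar\omega$ is a homeomorphism of $\partialGM{\Teich{g}{m}}$ with inverse $\bar\omega'$, which already yields $\omega'=\omega^{-1}$ on $\partialGM{\Teich{g}{m}}\supseteq\pmf$, giving conclusion (1) once $\pmf$ is shown invariant.

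\textbf{Step 3 (recognizing $\pmf$ and $\mathcal{S}$).} I would first show $\bar\omega(\pmf)=\pmf$ by characterizing $\pmf$ inside the strictly larger set $\partialGM{\Teich{g}{m}}$ through data that Step 1 exhibits as preserved, namely the qualitative behaviour of the extended Gromov product, equivalently the vanishing locus of the intersection number. Granting this, $\bar\omega|_{\pmf}$ is a homeomorphism preserving the symmetric relation $\{([F],[G]):I(F,G)=0\}$. Within $\pmf$ the simple closed curves are singled out by this relation alone: using $\dim_{\mathbb{C}}\Teich{g}{m}\ge 2$, a class $[F]\in\pmf$ lies in $\mathcal{S}$ precisely when its disjointness locus $\{[G]:I(F,G)=0\}$ has maximal dimension, a property invariant under any relation-preserving homeomorphism. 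Hence $\bar\omega(\mathcal{S})=\mathcal{S}$ and $\bar\omega$ sends disjoint curves to disjoint curves, i.e.\ induces a simplicial automorphism of the curve complex; this is conclusion (2).

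\textbf{Main obstacle.} The heart of the argument is Step 3, and specifically the intrinsic recognition of $\pmf$ inside the Gardiner--Masur boundary by intersection-number data that is robust under the \emph{coarsening} built into the hypothesis: the distortion only controls the Gromov product up to multiplicative and additive constants, so any invariant used to cut out $\pmf$ must survive this imprecision. Once $\pmf$ is isolated, the passage from the preserved disjointness relation to a simplicial automorphism of the curve complex is the classical Ivanov-type recognition of $\mathcal{S}$, and it is exactly here that the dimension hypothesis $\dim_{\mathbb{C}}\Teich{g}{m}\ge 2$ is indispensable.
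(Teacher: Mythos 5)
Your skeleton coincides with the paper's: transferring the distortion inequality to the boundary (your Step 1 is Proposition \ref{prop:translation_null_space2} in Gromov-product language), using vanishing of the extended intersection number as the preserved relation, and finishing with an Ivanov-type dimension count where $\dim_{\mathbb{C}}\Teich{g}{m}\ge 2$ enters (the paper's \S\ref{subsec:proof_of_theorem_characterization}, via Theorem 4.1 of \cite{Ivanov}). The gaps are in your Steps 2 and 3, exactly where you locate the ``main obstacle''. The mechanism of Step 2 is false: bounded Teichm\"uller distance does \emph{not} imply equal Gardiner--Masur limits. Masur's classical examples \cite{Masur0} of Jenkins--Strebel rays whose vertical foliations have the same core curves $\alpha_1,\dots,\alpha_k$ but different weights stay at bounded distance from one another, yet by \cite{Mi1} such a ray converges in $\cl{\Teich{g}{m}}$ to the projective class of an $\ell^2$-type function $\bigl(\sum_j c_j^2\, I(\alpha_j,\cdot)^2\bigr)^{1/2}$ whose coefficients vary with the weights; different weights give bounded-distance sequences with \emph{distinct} boundary limits. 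Your Lipschitz bound $\lvert\log \ext_x(\alpha)-\log\ext_y(\alpha)\rvert\le 2d_T(x,y)$ survives the limit only as a bounded ratio of limit functions, not as proportionality, so it cannot force equal projective limits. Consequently $\bar\omega'\circ\bar\omega=\mathrm{id}$ on all of $\partialGM{\Teich{g}{m}}$ is unjustified --- and the paper never claims it; it proves the identity only on $\pmf$, and even that requires the rigidity of uniquely ergodic points (Lemma \ref{lem:charcterization_ue}, i.e.\ Theorem 3 of \cite{Mi3}) together with the fact that for uniquely ergodic $G$ the null space $\mathcal{N}(G)$ is the single ray $\mathbb{R}_+G$ (Lemma \ref{lem:unuquely_ergodic_points}), so that $h_{\omega'}\circ h_{\omega}(\mathcal{N}(G))\subset\mathcal{N}(G)$ pins down $\omega'\circ\omega([G])=[G]$, followed by density of such $[G]$ in $\pmf$.

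Step 3 has the more serious gap: you never produce the characterization of $\pmf$ inside $\partialGM{\Teich{g}{m}}$ on which everything rests, and the invariant you propose --- the vanishing locus of the intersection number --- provably cannot do the job. The same examples show why: the Gardiner--Masur limit $p$ of a Jenkins--Strebel ray with core curves $\alpha_1,\dots,\alpha_k$, $k\ge 2$, lies \emph{outside} $\pmf$, yet its null space in $\mf$ equals $\{G\in\mf : I(\alpha_j,G)=0 \mbox{ for all } j\}$, which is exactly the null space of the projective multicurve $\bigl[\sum_j a_j\alpha_j\bigr]\in\pmf$; no condition phrased through vanishing of intersection numbers with measured foliations separates them. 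The paper circumvents this by characterizing only the \emph{uniquely ergodic} points (null space equal to one ray, Lemma \ref{lem:unuquely_ergodic_points}) and then showing $\omega$ maps these into $\mf$ by a surjectivity trick: from $\omega\circ\omega'=\mathrm{id}$ on $\pmf$ one gets $\mf\subset h_\omega(\Gmbdy)$, so a nonzero $F\in\mathcal{N}(h_\omega(G))\cap\mf$ (which exists by Proposition \ref{prop:translation_null_space1}) can be written $F=h_\omega(\mathfrak{a})$; then $i(\mathfrak{a},G)=0$ forces $\mathfrak{a}\in\mathbb{R}_+G$, hence $h_\omega(G)\in\mf$. Only after this does density of uniquely ergodic points and continuity yield $\omega(\pmf)=\pmf$ (Proposition \ref{prop:OMEGApreservePMF}), and then Proposition \ref{prop:omega_preserving_mf} feeds the codimension-one criterion for recognizing $\mathcal{S}$. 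This uniquely-ergodic machinery is a substantive import from \cite{Mi3}, not a formal consequence of your Steps 1--2, and without it your argument does not close.
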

%
%Any isometry is a mapping of bounded distortion for triangles
%and has a quasi-inverse.
%Furthermore,
%L. Liu and W. Su showed that any isometry extends homeomorphically 
%on $\partialGM{\Teich{g}{m}}$ (cf. \cite{LiuSu}).
%Hence,
%every isometry satisfies the assumptions in Theorem \ref{thm:almost_isometry}.
By definition,
a quasi-invertible mapping of bounded distortion for triangles is a quasi-isometry.
However,
the author does not know whether Theorem \ref{thm:almost_isometry}
holds for quasi-isometries on $\Teich{g}{m}$.
We remark that
(1) in Theorem \ref{thm:almost_isometry} holds
when the complex dimension of $\Teich{g}{m}$ is equal to one.
In this case,
$(\Teich{g}{m},d_T)$ is isometric to the hyperbolic plane,
and both the Gardiner-Masur boundary and $\pmf$
coincide with the boundary at infinity
of the hyperbolic plane (cf. e.g. \cite{Mi0}).
Hence any quasi-isometry on $(\Teich{g}{m},d_T)$ induces
a homeomorphism of $\pmf$.
However,
the assertion (2) does not hold because the isometry group
of $(\Teich{g}{m},d_T)$ acts transitively in this case.
%The remaining case for the assertion (1)
%is proven in Proposition \ref{prop:OMEGApreservePMF}.
%In the proof,
%we do not have any restriction on the topology of $X$.
%Furthermore,
%even if $X$ is a twice punctured torus,
%under the assumption of Theorem \ref{thm:almost_isometry},
%the action of $\omega$ on $\pmf$
%also induces an automorphism of the complex of curves
%(cf. \S\ref{subsec:proof_of_theorem_characterization}).
%We only need the assumption on the topology of $X$
%for proving the assertion (2)
%in Theorem \ref{thm:almost_isometry}
%by applying Ivanov-Korkmaz-Luo's characterization
%of automorphisms of the complex of curves.

%that
%the restriction of such a mapping to $\pmf$ coincides with the action
%of a homeomorphism on $X$
%(cf. Theorem \ref{thm:almost_isometry}).
\subsubsection{Isometries on $\Teich{g}{m}$}
\label{subsubsec:isometry-teich}
%In \cite{Royden},
%H. Royden first observed a beautiful result that
%any biholomorphic automorphism of $\Teich{g}{m}$ is induced from an orientation
%preserving homeomorphism on $X$
%(see also Earle-Kra \cite{EK} and Earle-Markovic \cite{EM}).
%Since the Teichm\"uller distance coincides with the Kobayashi
%intrinsic distance on $\Teich{g}{m}$,
%Royden's result gives a characterization of certain isometries
%on $\Teich{g}{m}$.
%In \cite{Ivanov},
%N. Ivanov showed that with few exception,
%the isometry group of $(\Teich{g}{m},d_T)$ is isomorphic to
%the extended mapping class group.
%
Theorem \ref{thm:almost_isometry}
allows us to give
an alternative approach to
Earle-Ivanov-Kra-Markovic-Royden's characterization of
the isometry group of $(\Teich{g}{m},d_T)$
via the Gardiner-Masur compactification.
Namely,
we show the following
in \S\ref{subsec:isometry_on_TX}.
 
%\begin{theorem}
%[No proper similarities]
%\label{coro:Ivanov_characterization}
%Suppose that the complex dimension of $\Teich{g}{m}$ is at least two
%and $X$ is not a torus with two punctures.
%Then,
%any similarity on $(\Teich{g}{m},d_T)$ is an isometry.
%In particular,
%the group of similarities on $(\Teich{g}{m},d_T)$
%is isomorphic to the extended mapping class group.
%\end{theorem}
%Here,
%for a mapping $\omega \colon (M,\rho)\to (M,\rho)$
%is said to be a \emph{similarity} if
%there is a constant $A>0$ such that
%$$
%d(\omega(x),\omega(y))=A\cdot d(x,y).
%$$

\begin{corollary}
[Royden \cite{Royden},
Earle-Kra \cite{EK},
Ivanov \cite{Ivanov},
and
Earle-Markovic \cite{EM}]
\label{coro:Ivanov_characterization}
Suppose that
$3g-3+m\ge 2$
%the complex dimension of $\Teich{g}{m}$ is at least two
and $(g,m)$ is neither $(1,2)$ nor $(2,0)$.
%$X$ is neither a torus with two punctures nor a closed surface of
%genus two.
Then,
the isometry group of $(\Teich{g}{m},d_T)$
is canonically isomorphic to the extended mapping class group.
\end{corollary}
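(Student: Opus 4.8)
The plan is to derive Corollary~\ref{coro:Ivanov_characterization} as a consequence of the Asymptotic Rigidity Theorem~\ref{thm:almost_isometry} together with Ivanov's theorem that (under the stated hypotheses on $(g,m)$) every simplicial automorphism of the complex of curves is induced by an element of the extended mapping class group. First I would observe that any surjective isometry $\omega\colon(\Teich{g}{m},d_T)\to(\Teich{g}{m},d_T)$ is in particular a mapping of bounded distortion for triangles: from the very definition of the Gromov product in terms of $d_T$, an isometry preserves $\gromov{x}{y}{z}$ exactly, so we may take $D_1=1$ and $D_2=0$. Moreover $\omega$ is bijective, so its inverse $\omega^{-1}$ is itself an isometry and serves as a quasi-inverse (with $D_3=0$). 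Thus both structural hypotheses of Theorem~\ref{thm:almost_isometry} on the algebraic side are automatic, and it remains only to verify the two continuity conditions~(a) and~(b).

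The key step is the continuous extendability to the Gardiner-Masur boundary. Here I would invoke the continuity of the extended Gromov product established in Corollary~\ref{coro:Gromov_product_boundary}, combined with property~(iv) of Theorem~\ref{thm:main_realization_0}, which encodes $d_T$ intrinsically through the intersection number on the cone $\GmInv$. An isometry $\omega$ preserves $d_T$ and hence, via the unification, preserves the intersection pairing on the image $\tilde{\Phi}_{GM}(\Teich{g}{m})$. Since the Gardiner-Masur compactification $\cl{\Teich{g}{m}}$ is a metrizable closure and $\omega$ is a $d_T$-isometry, sequences $\Psi_{x_0}(y_n)$ converging in $\cl{\Teich{g}{m}}$ are carried by $\omega$ to sequences whose images converge as well, because the limiting behavior is controlled entirely by the intersection-number geometry that $\omega$ respects; this yields a continuous extension of $\omega$ (and symmetrically of $\omega^{-1}$) to $\partialGM{\Teich{g}{m}}$. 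The normalization using $\Psi_{x_0}$ rather than $\tilde{\Phi}_{GM}$ is essential here, as noted after \eqref{eq:Psi_and_Phi}, since $\Psi_{x_0}$ alone admits the continuous extension to $\cl{\TeichbX}$.

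With conditions~(a) and~(b) verified, Theorem~\ref{thm:almost_isometry} applies: conclusion~(2) gives that $\omega$ restricts to a simplicial automorphism of the complex of curves. Invoking Ivanov's theorem (valid precisely when $3g-3+m\ge 2$ and $(g,m)\notin\{(1,2),(2,0)\}$), this automorphism is induced by a unique element of the extended mapping class group $\mcg^{\pm}(X)$. Conversely, every element of the extended mapping class group acts by a $d_T$-isometry, and this action is faithful on $\Teich{g}{m}$ under the dimension hypothesis, giving an injective homomorphism in the other direction. I would finish by checking that the two constructions are mutually inverse: the mapping class recovered from $\omega$ acts identically to $\omega$ on $\pmf$ and hence, by the density-type rigidity of the extremal-length data (an isometry is determined by its boundary action together with a single interior value), must equal $\omega$ on all of $\Teich{g}{m}$.

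The main obstacle I expect is precisely the verification that an isometry, known a priori only to preserve $d_T$ on the interior, extends \emph{continuously} to the Gardiner-Masur boundary; this is where the full strength of the intersection-number unification is needed, since $d_T$-isometries need not be smooth or even Lipschitz in any classical boundary chart, and the extension must be extracted purely from the metric structure encoded in $i(\cdot,\cdot)$. A secondary technical point is confirming that the abstract simplicial automorphism produced by Theorem~\ref{thm:almost_isometry} together with the mapping class it determines actually agrees with $\omega$ throughout the interior, rather than merely on the boundary $\pmf$; this requires knowing that an isometry of $(\Teich{g}{m},d_T)$ is rigidly determined by its induced boundary homeomorphism, a fact I would establish using property~(iv) of Theorem~\ref{thm:main_realization_0} to reconstruct interior distances from boundary data.
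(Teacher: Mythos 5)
Your overall outline (verify the hypotheses of Theorem \ref{thm:almost_isometry} for an isometry, obtain an automorphism of the complex of curves, invoke Ivanov--Korkmaz--Luo, then show the recovered mapping class agrees with $\omega$) matches the paper, but your treatment of the two hard steps has genuine gaps. The first and most serious one is the continuous extension of $\omega$ to $\partialGM{\Teich{g}{m}}$. The paper does \emph{not} derive this from the intersection-number machinery: it quotes the fact that the Gardiner--Masur compactification coincides with the horofunction compactification of $d_T$ (Liu--Su \cite{LiuSu}), and isometries always extend homeomorphically to horofunction compactifications. Your attempted replacement argument does not work as sketched. What an isometry preserves are the Gromov products $\gromov{y}{z}{x_0}$ of \emph{interior} points, and the extended Gromov product does not separate points of $\partialGM{\Teich{g}{m}}$: by Corollary \ref{coro:Gromov_product_boundary}(2), two distinct classes $[\alpha],[\beta]\in\pmf$ with $I(\alpha,\beta)=0$ (e.g.\ disjoint curves) satisfy $\gromov{[\alpha]}{[\beta]}{x_0}=+\infty$. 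Hence, given $y_n\to p$, knowing that all Gromov products $\gromov{\omega(y_n)}{\omega(y_m)}{x_0}$ blow up only tells you that any two subsequential limits of $\{\omega(y_n)\}$ are mutually null; it does not force them to coincide, so convergence of $\{\omega(y_n)\}$ does not follow. Even the stronger information that $\omega$ preserves Gromov products with \emph{all} interior points reduces the problem to showing that a boundary point is determined by its extremal-length function $w\mapsto\ext_w(\cdot)$ on $\Teich{g}{m}$ --- and that injectivity statement is exactly the content of the horofunction identification, which is external input (\cite{LiuSu}), not something established by Theorem \ref{thm:main_realization_0} or Corollary \ref{coro:Gromov_product_boundary}.

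The second gap is the final agreement step. You assert that an isometry is determined by its boundary action together with one interior value, "reconstructing interior distances from boundary data" via property (iv) of Theorem \ref{thm:main_realization_0}; but (iv) expresses $d_T(y,z)$ through intersection numbers of \emph{interior} points and gives no such reconstruction from $\pmf$. The paper has to work here: it first proves (Lemma \ref{lem:action_bdy}) that the Gardiner--Masur boundary action of a mapping class restricted to $\pmf$ is the canonical one --- without this you cannot even compare $\omega$ with $h_*$ on the boundary --- and then shows that the isometry $\overline{\omega}=\omega\circ h_*^{-1}$, which is the identity on $\pmf$, fixes a point of $\Teich{g}{m}$ (Claim \ref{claim:fixed_pt}, using Teichm\"uller disks of pseudo-Anosov maps and the uniqueness of Teichm\"uller geodesics with prescribed endpoints in $\partialGM{\Teich{g}{m}}$, Theorem 1.1 of \cite{Mi3}), and finally is the identity everywhere (Claim \ref{claim:identity}, using uniqueness of Jenkins--Strebel rays with a given endpoint and density of such rays). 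Your proposal compresses all of this into an unproved rigidity assertion; as stated, the proof is incomplete at precisely the two places where the real mathematical content lies.
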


Actually,
our proof of Corollary \ref{coro:Ivanov_characterization}
is somewhat modelled on Ivanov's proof.
We outline the idea of his proof.
The essential part is to show that an isometric action on $(\Teich{g}{m},d_T)$
induces an automorphism of the complex of curves.
After then,
from a theorem by Ivanov, Korkmaz and Luo,
we see that such an automorphism of the complex of curves is induced by an
element of the extended mapping class group
(cf. \cite{Ivanov0}, \cite{Korkmaz} and \cite{Luo}).
Finally, it is checked that the action of the given isometry coincides with the action of
the element of the extended mapping class group.

As noted before,
our proof of Corollary \ref{coro:Ivanov_characterization} also 
follows the same line.
However,
our proof of the essential part above follows from Theorem \ref{thm:almost_isometry}
which holds for
mappings of bounded distortion for triangles.
%with conditions in Theorem \ref{thm:almost_isometry}.
%with homeomorphic extensions to $\partial_{GM}\TeichbX$
%Namely,
%we obtain the case of isometries as a corollary of our result.
%Moreover,
To show the essential part above,
Ivanov induces a self-homeomorphism of $\pmf$.
%as Theorem \ref{thm:almost_isometry}.
To do this,
he identifies $\pmf$ with the unit sphere in the tangent space,
and defines the self-homeomorphism
by passing the ``exponential maps"
(cf. the discussion after the proof of Lemma 5.2 in \cite{Ivanov}).

\subsection{Plan of this paper}
\label{subsec:plan}
%We give a rough explaination about the extension.
%The embedding $\Psi_{GM}$ of $\TeichbX$ into $\GmInv$ is defined as \eqref{eq:embedding}.
%Notice that the embedding is dependent on the base point $x_0$.
%To extend the intersection number on $\mf$,
%we first recognize the function $\mathcal{E}_p(F)$
%as the intersection number between $\Phi_{GM}(p)$
%for $p\in \cl{\TeichbX}$ and $F\in \mf$
%(cf. \S\ref{sec:GM_closure} and
%\eqref{eq:intersection_number_pre-definition}).
%Under this recognition,
%we define the extremal length for elements of $\GmInv$
%(cf. \eqref{eq:extremal_length_cone}).
%The definition of this ``new" extremal length
%is motivated by the Minsky's inequality
%\eqref{eq:minsky_inequality}.
%
%After then,
%we redefine the function $\mathcal{E}_{\mathfrak{y}}$
%on $\GmInv$ for $\mathfrak{y}\in \GmTeich$
%(cf. \eqref{eq:E_y-2}).
%From the definition,
%we observe that 
%$\mathcal{E}_{y}(z)=
%\exp(-2\gromov{y}{z}{x_0})$
%%for $\mathfrak{y}=\Psi_{GM}(y)$ and $\mathfrak{z}=\Psi_{GM}(z)$
%for $y,z\in \TeichbX$,
%which implies that our intersection number is symmetric
%(cf. \eqref{eq:gromov-product_extension}).
%Finally,
%we show the convergence of $\mathcal{E}_{\mathfrak{y}}$
%when $\mathfrak{y}\to \mathfrak{a}\in \Gmbdy$.
%The value $\mathcal{E}_{\mathfrak{a}}(\mathfrak{b})$
%for $\mathfrak{a},\mathfrak{b}\in \GmInv$
%is nothing but our intersection number $i(\mathfrak{a},\mathfrak{b})$
%between $\mathfrak{a}$
%and $\mathfrak{b}$.
This paper is organized as as follows.
In \S\S\ref{sec:Teichmuller-theory} and \ref{sec:GM_closure},
we recall basic notions in Teichm\"uller theory
and known results for the Gardiner-Masur compactification.
%Especially,
%we will recall a continuous function
%$\mathcal{E}_p$ on $\mf$
%associated to $p\in \cl{\TeichbX}$ (cf. \cite{Mi1}).
%Notice that the function $\mathcal{E}_p$ is dependent on the
%choice of the basepoint $x_0\in \Teich{g}{m}$.
%Namely,
%$\mathcal{E}_p=\mathcal{E}^{x_0}_p$
%(cf. \S\ref{subsec:function-E_p}).

In \S\ref{sec:cones},
we define the cones which are essential objects in this paper.
We also define the (topological) \emph{models} of cones,
and canonical identifications between cones and their models.
%By definition,
%the model of the cone canonically contains $\cl{\TeichbX}$
%(cf. \S\ref{subsec:model_cones}).
We use such models when we develop an argument which
depends on the choice of the basepoint of $\TeichbX$.

From \S\ref{sec:extremal_length_GmInv} to \S\ref{sec:extension_intersection_number},
we devote to constructing the intersection number on the cone $\GmInv$.
%The strategy to define the intersection number on $\GmInv$
%is simple:
%We first recognize the value $\mathcal{E}_p(F)$ as the intersection number between $p\in \cl{\TeichbX}$ and $F\in \mf$,
%and then,
%we extend it to the product $\GmInv\times\GmInv$.
%
In \S\ref{sec:extremal_length_GmInv},
we define the extremal length $\extB^{x_0}_{\cdot}(\,\cdot\,)$ and
the intersection number $i_{x_0}(\,\cdot\,,\,\cdot\,)$
associated to the basepoint $x_0$ on a part of each model.
The definition of this ``new" extremal length
is motivated by the following formula
% from Minsky's inequality
%Namely,
%from Minsky's inequality,
%we have the following formula
%\emph{tautological formula}
\begin{equation}
\label{eq:tautological_formula}
\ext_y(G)=\sup_{F\in \mf-\{0\}}
\frac{I(G,F)^2}{\ext_y(F)}
\end{equation}
for $G\in \mf$
(cf. \eqref{eq:minsky_inequality}).
%Unfortunately,
%the equation \eqref{eq:tautological_formula} can not be adopted as
%the definition of the extremal length for measured foliations
%because the right-hand side is calculated by using the extremal length.
%However,
%for extending the extremal length on our cone $\GmInv$,
%we will adopt the  formula \eqref{eq:tautological_formula}.
%Thus,
We first define the intersection number between elements of $\GmInv$
and measured foliations (\S\ref{subsec:intersection_number_associated_to_basepoint}),
and
the extremal length for elements of $\GmInv$
%on marked Riemann surfaces
(\S\ref{subsec:extremal_length_GmInv}).
%
%The definitions of our
%extremal length function and intersection number function given
%in the previous paragraph
%depend on the choice of the basepoint.
%Hence,
%our discussion will be given on the models defined above.
%%It might look to be awkward
%%because our main result,
%%Theorem \ref{thm:main_realization_0},
%%is independent of the choice of basepoint.
%%However,
%%because our function $\mathcal{E}_p$,
%%which is a basic object in the starting point,
%%depends on the choice of the basepoint,
%%we are compelled to begin with a basepoint-depending argument.
%%Actually,
%Later on,
%our intersection number and extremal length
%will be shown to be \emph{intrinsic} in the sense that
%each of them is represented as the composition of a function on the cone
%and the identification between the cone and the model
%defined in \S\ref{sec:cones}
%(cf. \S\ref{subsec:extremal_length_is_intrinsic}).
%%and \S\ref{subsec:IntersectionNumber_is_intrinsic}).
%
In \S\ref{sec:topology_of_models},
we discuss the topology of models of cones.
%The cone is locally compact and metrizable.
%Furthermore,
%we see that the extremal length function is
%a proper function
%(cf. Proposition \ref{prop:bdd_implies_compact}).
%We also give a system of neighborhoods
%which will be used for showing the equicontinuity
%of the family of continuous functions discussed in
%\S\ref{sec:Gromov_product_extension}
%(cf. \S\ref{subsec:system_neighborhood}).
In  \S\S\ref{sec:Gromov_product_extension}
and \ref{sec:extension_intersection_number},
the intersection number
on the cone is defined by extending the functions defined in earlier sections.
%In the beginning of \S\ref{sec:Gromov_product_extension},
%we define $\mathcal{E}_{\eta}(\zeta)$
%where $\zeta$ and $\eta$ are in a part of the model of the cone.
%The function $\mathcal{E}_{\eta}(\zeta)$
%is an extended notion of the function $\mathcal{E}_p$
%which we mentioned before (cf. \eqref{eq:E_y-2}).
%In \S\ref{subsec:Gromov_product_d_T},
%we notice a relation between our function $\mathcal{E}_{\eta}(\zeta)$
%and the Gromov product for $d_T$
%(cf. \eqref{eq:gromov-product_extension}).
%As discussed above,
%we will show that the family $\{\mathcal{E}_\eta\}_\eta$
%is a normal family of continuous functions on a part of the cone
%in \S\ref{subsec:equicontinuity}.
%Indeed,
%our intersection number is defined as the limit of such a normal family
%in \S\ref{subsec:extension_i_x_0}.
%The proof of the extension is completed
% in
%\S\ref{subsec:IntersectionNumber_is_intrinsic}
%by checking
%that the intersection number is intrinsic.
We prove Corollary \ref{coro:Gromov_product_boundary}
in \S\ref{subsec:extension_Gromov}.
In \S\ref{sec:isometric_action},
we show Theorem \ref{thm:almost_isometry}
and give an alternative approach to Earle-Kra-Ivanov-Markovic-Royden's characterization in Corollary \ref{coro:Ivanov_characterization}.
%The strategy to prove Theorem \ref{thm:almost_isometry}
%is to investigate the \emph{null space} $\mathcal{N}(\mathfrak{a})$
%for $\mathfrak{a}\in \GmInv$
%(cf. \S\ref{subsec:null_space}).
%
%From Corollary \ref{coro:Gromov_product_boundary},
%when a mapping $\omega$ of bounded distortion
%for triangles admits a continuous extension to $\partialGM{\Teich{g}{m}}$,
%we observe that for $p,q\in \partialGM{\Teich{g}{m}}$,
%the intersection number between $p$ and $q$ is zero if and only if
%so is that between $\omega(p)$ and $\omega(q)$
%(cf. Proposition \ref{prop:translation_null_space2}).
%We will also characterize uniquely ergodic measured foliations
%(in $\GmInv$)
%in terms of their null spaces (cf. \S\ref{subsec:ue_in_GmInv}).
%Using the observation and the characterization,
%we deduce that $\omega$ induces
%a self-homeomorphism of $\pmf$,
%which implies (1) of Theorem \ref{thm:almost_isometry}
%(cf. \S\ref{subsec:omega_preserves_pmf}).
%In the proof of (2) of
%Theorem \ref{thm:almost_isometry},
%the author is impressed with that in Theorem A in Ivanov \cite{Ivanov}.
%
%In the last section,
%we discuss the hyperboloid model on Teichm\"uller space
%with the Teichm\"uller distance.

\section{Teichm\"uller theory}
\label{sec:Teichmuller-theory}
\subsection{Teichm\"uller space}
\label{subsec:teichmuller-space}
%Throughout this paper,
%we fix a Riemann surface $X$ of analytically finite type $(g,m)$
%with $2g-2+m>0$.
The \emph{Teichm\"uller space 
$\Teich{g}{m}$ of Riemann surfaces of analytically finite type $(g,m)$}
%$\TeichbX$ of $X$
is the set of equivalence classes of marked Riemann surfaces
$(Y,f)$ where $Y$ is a Riemann surface and $f:X\to Y$ a quasiconformal mapping.
Two marked Riemann surfaces $(Y_1,f_1)$
and $(Y_2,f_2)$ are
said to be \emph{Teichm\"uller equivalent}
if there is a conformal mapping $h:Y_1\to Y_2$
which is homotopic to $f_2\circ f_1^{-1}$.

Teichm\"uller space 
$\Teich{g}{m}$
%$\TeichbX$
has a canonical complete distance,
called the
\emph{Teichm\"uller distance $d_T$},
which is defined by
\begin{equation}
\label{eq:original_teichmuller_distance}
d_T(y_1,y_2)=\frac{1}{2}
\log
\inf
\{K(h)\mid \mbox{$h$ is q.c. homotopic to $f_2\circ f_1^{-1}$}
\}
\end{equation}
for $y_i=(Y_i,f_i)\in \Teich{g}{m}$
%\TeichbX$
($i=1,2$),
where
$K(h)$ is the maximal dilatation of $h$ (e.g. \cite[\S4.1.1]{IT}).
%The Teichm\"uller distance has a geometric representation
%called \emph{Kerckhoff's formula}
%which we recall in \S\ref{subsec:kerckhoff_formula}.
%given in \eqref{eq:Kerckhoff_formula}.
%In this paper,
%we shall adopt the formula as the definition of the distance.

%\begin{convention}
%\label{convention:1}
%%As we noted in Introduction,
%%in some part of this paper,
%%we will give a \emph{basepoint-depending} argument.
%%For instance,
%%our function $\mathcal{E}_p$
%%defined in \S\ref{sec:GM_closure}
%%is determined up to mulitiplications by positive constants,
%%which depends on the choice of the basepoint.
%Unless otherwise noted,
%throughout this paper,
%%we consider the Teichm\"uller space
%%$\TeichbX$ as a pointed space with
%we consider $x_0=(X,id)\in \Teich{g}{m}$
%as a basepoint.
%%When we do not care the basepoint in some context,
%%we will denote Teichm\"uller space by $\Teich{g}{m}$
%%instead of $\TeichbX$.
%%Namely,
%%$\TeichbX=(\Teich{g}{m},x_0)$.
%\end{convention}
%

\subsection{Measured foliations}
\label{subsec:measured_foliation}
%Let $\mathcal{S}$ be the set of non-trivial and
%non-peripheral simple closed curves on $X$.
Denote by $\mathbb{R}_+\otimes \mathcal{S}$ 
the set of formal products $t\alpha$ where $t\ge 0$ and
$\alpha\in \mathcal{S}$.
The set $\mathbb{R}_+\otimes \mathcal{S}$ is embedded into
$\mathbb{R}_+^\mathcal{S}$ by
\begin{equation}
\label{eq:scc_embedded}
\mathbb{R}_+\otimes \mathcal{S}\ni t\alpha\mapsto
[\mathcal{S}\ni \beta \mapsto t\,I(\alpha,\beta)]
\in \mathbb{R}_+^\mathcal{S}.
\end{equation}
%where $I(\,\cdot\,,\,\cdot\,)$
%is the \emph{geometric intersection number}.
We topologize $\mathbb{R}_+^\mathcal{S}$
with the pointwise convergence
(i.e. the product topology).
%It is known the mapping \eqref{eq:scc_embedded}
%is an embedding.
The \emph{space $\mf$
of measured foliations on $X$}
is the closure of the image of the mapping \eqref{eq:scc_embedded}.
The intersection number of any two weighted curves in
$\mathbb{R}_+\otimes \mathcal{S}$
is defined by $I(t\alpha,s\beta)=ts\,I(\alpha,\beta)$.
It is known that the intersection number function extends continuously
on $\mf\times \mf$
(cf. \cite{Rees}).

The positive numbers $\mathbb{R}_{>0}$ acts on $\mathbb{R}_+^\mathcal{S}$
by multiplication.
Let
\begin{equation}
\label{eq:quotient}
\proj\colon \mathbb{R}_+^\mathcal{S}-\{0\}\to {\rm P}\mathbb{R}_+^\mathcal{S}
=(\mathbb{R}_+^\mathcal{S}-\{0\})/\mathbb{R}_{>0}
\end{equation}
be the quotient mapping.
The space $\pmf$ of \emph{projective measured foliations}
is defined to be the quotient
$$
\pmf=\proj(\mf-\{0\})=(\mf-\{0\})/\mathbb{R}_{>0}.
$$
It is known that $\mf$ and $\pmf$
are homeomorphic to $\mathbb{R}^{6g-6+2n}$ and $S^{6g-7+2n}$
respectively (cf. \cite{FLP}).

\subsection{Extremal length}
\label{subsec:extremallength}
For $y=(Y,f)\in \Teich{g}{m}$ and $\alpha\in \mathcal{S}$,
the \emph{extremal length of $\alpha$ on $y$} is defined by
\begin{equation}
\label{eq:extremal_length_original}
\ext_y(\alpha)=1/\sup_A\{{\rm Mod}(A)\mid
\mbox{$A\subset Y$ and the core is homotopic to $f(\alpha)$}
\},
\end{equation}
where ${\rm Mod}(A)$ is the \emph{modulus} of an annulus $A$,
which is equal to $(\log r)/2\pi$ if $A$ is conformally equivalent
to a round annulus $\{1<|z|<r\}$.
For $t\alpha\in \mathbb{R}_+\otimes \mathcal{S}$,
we set
$$
\ext_y(t\alpha)=t^2\ext_y(\alpha).
$$
In \cite{Ker},
Kerckhoff showed that the extremal length function extends continuously 
on $\mf$.
Let
\begin{equation}
\label{eq:MF1}
\mf_1=\{F\in \mf\mid \ext_{x_0}(F)=1\}.
\end{equation}
The extremal length of measured foliations satisfies the following
inequality,
which is called
\emph{Minsky's inequality}:
\begin{equation}
\label{eq:minsky_inequality}
I(F,G)^2\le \ext_y(F)\cdot \ext_y(G)
\end{equation}
for all $y\in \Teich{g}{m}$ and $F,G\in \mf$
(cf. \cite{Minsky2}).
Minsky's inequality is sharp in the sense that 
for any $y\in \Teich{g}{m}$ and $F\in \mf-\{0\}$,
there is a unique $G\in \mf-\{0\}$
up to positive multiple
such that
\begin{equation}
\label{eq:minsky_inequality_sharp}
I(F,G)^2=\ext_y(F)\cdot \ext_y(G).
\end{equation}
Furthermore,
such a pair $F$ and $G$ of measured foliations
are realized by the horizontal and vertical foliations of
a holomorphic quadratic
differential on a marked Riemann surface $y$,
and vice versa
(cf. \cite{GM}).

\subsection{Kerckhoff's formula}
\label{subsec:kerckhoff_formula}
In \cite{Ker},
Kerckhoff 
%also observed that the Teichm\"uller distance
%is represented by 
gave the following formula:
\begin{equation}
\label{eq:Kerckhoff_formula}
d_T(y,z)=
\frac{1}{2}\log
\sup_{F\in \mf-\{0\}}
\frac{\ext_y(F)}{\ext_z(F)}
=
\frac{1}{2}\log
\max_{F\in \mf_1}
\frac{\ext_y(F)}{\ext_z(F)}.
\end{equation}
In fact,
for any $y_1,y_2\in \Teich{g}{m}$,
there is a unique pair $(F,G)$ of measured foliations in $\mf_1$
such that
\begin{equation}
\label{eq:kerckhoff_formula_sharp}
\frac{\ext_{y_1}(F)}{\ext_{y_2}(F)}
=\frac{\ext_{y_2}(G)}{\ext_{y_1}(G)}
=e^{2d_T(y_1,y_2)}.
\end{equation}

\section{The Gardiner-Masur closure}
\label{sec:GM_closure}
%In this section,
%we recall basic properties of the Gardiner-Masur closure.
%Let $\mathcal{S}$ be the set of non-peripheral and
%non-trivial simple closed curves on $X$,
%and $\mf$ the space of measured foliations.
%By definition,
%the space $\mf$ is contained in the space
%$\mathbb{R}_+^{\mathcal{S}}$ of non-negative functions on $\mathcal{S}$
%(cf. \S\ref{subsec:measured_foliation}).
%
%Let $\contMF$ be the space of continuous functions $f$ on $\mf$
%with homogeneous property that $f(tF)=tf(F)$ for all $t\ge 0$ and $F\in \mf$.
%Notice that for any $y\in \TeichbX$,
%the hyperbolic length $\ell_y(\cdot)$ and
%the square root of the extremal length
%$\ext_y(\cdot )^{1/2}$ are contained in $\contMF$.
%Furthermore,
%from Thurston, Bonahon and Rees' Theorem,
%any measured foliation is recognized as an element of $\contMF$
%via the intersection number
%and hence we recognize $\mf$ as a subset of $\contMF$.
%
%We define the \emph{extremal length} for $f\in \contMF$
%on $y\in \TeichbX$
%by
%$$
%\ext_y(f)=\sup_{F\in \mf-\{0\}}\frac{f(F)^2}{\ext_y(F)}.
%$$
In \cite{GM},
Gardiner and Masur proved
that a mapping
%the mapping \eqref{eq:GM-embedding}
%\subsection{Gardiner-Masur closure}
%We define a mapping
\begin{equation}
\label{eq:GM-embedding}
\Phi_{GM}\colon
\Teich{g}{m}\ni y\mapsto
[\mathcal{S}\ni \alpha\mapsto \ext_y(\alpha)^{1/2}]
\in {\rm P}\mathbb{R}_+^\mathcal{S}
\end{equation}
%and proved that the mapping \eqref{eq:GM-embedding}
is an embedding and the image is relatively compact,
where $\ext_y(\alpha)$ is the extremal length of
$\alpha\in \mathcal{S}$ on $y\in \Teich{g}{m}$.
The closure
$\cl{\Teich{g}{m}}$
%$\cl{\Teich{g}{m}}=\cl{\TeichbX}$
of the image
% of the embedding \eqref{eq:GM-embedding}
is called the \emph{Gardiner-Masur closure} or \emph{compactification},
and the complement of the image in the closure is said to be the
\emph{Gardiner-Masur boundary}
which we denote by 
$\partialGM{\Teich{g}{m}}$.
%$\partial_{GM}\Teich{g}{m}=\partial_{GM}\TeichbX$.
%The Gardiner-Masur compactification $\cl{\Teich{g}{m}}$
%is realized in the projective space ${\rm P}\mathbb{R}_+^{\mathcal{S}}$
%of $\mathbb{R}_+^{\mathcal{S}}$.

%In \S\ref{subsec:function-E_p} below,
%we assign a continuous function $\mathcal{E}_p$
%on $\mf$ to all point $p$ in the Gardiner-Masur closure.
%As noted in Convention \ref{convention:1},
%after \S\ref{subsec:teichmuller-space}, 
%The function $\mathcal{E}_p$ depends on the choice of basepoint $x_0$.
%When we emphasis the dependence,
%we write $\mathcal{E}^{x_0}_p$ instead of $\mathcal{E}_p$.
%As we will observe in Proposition \ref{prop:pairing} later,
%the value $\mathcal{E}_p(F)$ is recognized as
%the intersection number between $p$ and $F\in \mf$
%associated to $x_0$
%(see also \S\ref{subsec:intersection_number_associated_to_basepoint}).

%\subsection{Gardiner-Masur closure}
%We define a mapping
%\begin{equation}
%\label{eq:GM-embedding}
%\TeichbX\ni y\mapsto
%[\mathcal{S}\ni \alpha\mapsto \ext_y(\alpha)^{1/2}]
%\in {\rm P}\mathbb{R}_+^\mathcal{S}.
%\end{equation}
%In \cite{GM},
%F. Gardiner and H. Masur proved that the mapping \eqref{eq:GM-embedding}
%is an embedding and the image is relatively compact.
%The closure $\cl{\TeichbX}$ of the image of the embedding \eqref{eq:GM-embedding}
%is called the \emph{Gardiner-Masur closure} or \emph{compactification}
%and the complement of the image in the closure is said to be the
%\emph{Gardiner-Masur boundary},
%which we denote by $\partialGM{\TeichbX}$.

%\subsection{Function $\mathcal{E}_p$}
%\label{subsec:function-E_p}
For $y\in \TeichbX$,
we define a continuous function $\mathcal{E}_y$ on $\mf$
by
\begin{equation}
\label{eq:E_y}
\mathcal{E}_y(F)=
%\mathcal{E}^{x_0}_y(F)=
\left\{
\frac{\ext_{y}(F)}{K_y}
\right\}^{1/2}
\end{equation}
where $K_y=\exp(2d_T(x_0,y))$.
In \cite{Mi1},
the author showed that for any $p\in \partialGM{\TeichbX}$,
there is a continuous function $\mathcal{E}_p$ on $\mf$
such that
\begin{itemize}
\item[(E1)]
the projective class  of the assignment
$\mathcal{S}\ni \alpha\mapsto \mathcal{E}_p(\alpha)$ is equal to $p$;
\item[(E2)]
if a sequence $\{y_n\}_{n=1}^\infty$ converges to $p\in \cl{\TeichbX}$,
there are $t_0>0$ and a subsequence
$\{y_{n_j}\}_j$ such that $\mathcal{E}_{y_{n_j}}$ converges to
$t_0\,\mathcal{E}_p$ uniformly on any compact set of $\mf$.
\end{itemize}
Notice
%from observations (E1) and (E2)
that
%the function 
%$\mathcal{E}_p$ is \emph{projectively}
%determined.
%Namely,
%for all $t>0$ and $p\in \partialGM{\TeichbX}$,
%the product 
$t\,\mathcal{E}_p$ also satisfies (E1) and (E2) above
for all $t>0$ and $p\in \partialGM{\TeichbX}$,
and the function $\mathcal{E}_p$ depends on the choice of basepoint $x_0$.
When we emphasis the dependence,
we write $\mathcal{E}^{x_0}_p$ instead of $\mathcal{E}_p$.

We first sharpen the condition (E2) above as follows (cf. \cite{Mi4}).

\begin{proposition}
\label{prop:continuity_E_p}
For any $p\in \partialGM{\TeichbX}$,
one can choose $\mathcal{E}_p$ appropriately such that
%\begin{itemize}
%\item[{\rm (1)}]
%if a sequence $\{y_n\}_{n=1}^\infty$ converges to $p\in \cl{\TeichbX}$,
%$\mathcal{E}_{y_n}$ converges to
%$\mathcal{E}_p$ uniformly on any compact set of $\mf$,
%and
%\item[{\rm (2)}]
the function
$$
\cl{\TeichbX}\times \mf\ni (p,F)\mapsto \mathcal{E}_p(F)
$$
is continuous.
%\end{itemize}
\end{proposition}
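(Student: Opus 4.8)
The plan is to pin down the normalization of $\mathcal{E}_p$ and then reduce joint continuity to a single equicontinuity estimate. The set $\mf_1$ of \eqref{eq:MF1} is compact (it meets each ray of $\mf-\{0\}$ once and maps homeomorphically onto $\pmf$). For interior points Kerckhoff's formula \eqref{eq:Kerckhoff_formula} gives $\sup_{F\in\mf_1}\mathcal{E}_y(F)=1$, and evaluating the uniform convergence in (E2) on the compact set $\mf_1$ shows $t_0\sup_{F\in\mf_1}\mathcal{E}_p(F)=1$; hence $\sup_{F\in\mf_1}\mathcal{E}_p(F)\in(0,\infty)$, and after replacing $\mathcal{E}_p$ by a positive multiple I may assume $\sup_{F\in\mf_1}\mathcal{E}_p(F)=1$ for every $p\in\cl{\TeichbX}$, with the convergence in (E2) then holding with scalar $t_0=1$. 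Since $\mathcal{S}$ is countable, $\cl{\TeichbX}$ is compact metrizable, so it suffices to show $\mathcal{E}_{p_n}(F_n)\to\mathcal{E}_p(F)$ whenever $(p_n,F_n)\to(p,F)$.

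The crux, and the only non-formal point, is the following $y$-independent modulus-of-continuity estimate for the interior family:
$$
|\mathcal{E}_y(F)-\mathcal{E}_y(G)|\le \sup_{H\in\mf_1}|I(F,H)-I(G,H)|
\qquad(F,G\in\mf,\ y\in\TeichbX).
$$
To prove it I would start from Minsky's tautological formula \eqref{eq:tautological_formula}, which after taking square roots reads $\ext_y(F)^{1/2}=\sup_{\ext_y(H)=1}I(F,H)$, exhibiting $\ext_y^{1/2}$ as a supremum of the functionals $I(\,\cdot\,,H)$. The elementary bound $|\sup a-\sup b|\le\sup|a-b|$ then gives $|\ext_y(F)^{1/2}-\ext_y(G)^{1/2}|\le\sup_{\ext_y(H)=1}|I(F,H)-I(G,H)|$. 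Dividing by $K_y^{1/2}$ and writing each competitor as $H=\ext_{x_0}(H)^{1/2}\hat H$ with $\hat H\in\mf_1$, the constraint $\ext_y(H)=1$ becomes $\ext_{x_0}(H)^{1/2}=\ext_y(\hat H)^{-1/2}$, while Kerckhoff's formula \eqref{eq:Kerckhoff_formula} yields $\ext_y(\hat H)\ge K_y^{-1}$, so $\ext_y(\hat H)^{1/2}K_y^{1/2}\ge 1$; the weight in front of $|I(F,\hat H)-I(G,\hat H)|$ is therefore at most $1$, which is exactly the displayed inequality. As $I$ is continuous on $\mf\times\mf$ and $\mf_1$ is compact, the right-hand side tends to $0$ as $G\to F$, uniformly for $F$ in a compact set, which is the desired equicontinuity; it passes verbatim to the boundary functions since each $\mathcal{E}_p$ is a uniform limit of $\mathcal{E}_y$'s. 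The same formula \eqref{eq:Kerckhoff_formula} gives the uniform bound $\mathcal{E}_y(F)\le\ext_{x_0}(F)^{1/2}$, so the family $\{\mathcal{E}_p:p\in\cl{\TeichbX}\}$ is uniformly bounded on compacta as well.

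With equicontinuity and uniform boundedness in hand, Arzel\`a--Ascoli together with a diagonal argument over a compact exhaustion of $\mf$ lets me extract from any subsequence of $\{\mathcal{E}_{p_n}\}$ a further subsequence converging uniformly on compacta to some continuous $\phi$. I then identify $\phi=\mathcal{E}_p$: the limit $\phi$ is homogeneous of degree one and satisfies $\sup_{\mf_1}\phi=1$; on $\mathcal{S}$ it is proportional to $\mathcal{E}_p$ (both represent $p$, using $p_n\to p$ in ${\rm P}\mathbb{R}_+^{\mathcal{S}}$), and by homogeneity together with the density of $\mathbb{R}_+\otimes\mathcal{S}$ in $\mf$ this proportionality extends to all of $\mf$; comparing the suprema over $\mf_1$ forces the constant to be $1$, so $\phi=\mathcal{E}_p$. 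As the limit is independent of the subsequence, the full sequence $\mathcal{E}_{p_n}$ converges to $\mathcal{E}_p$ uniformly on compacta, and combining this with $F_n\to F$ gives $\mathcal{E}_{p_n}(F_n)\to\mathcal{E}_p(F)$, establishing joint continuity.

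I expect the main obstacle to be the equicontinuity estimate of the second paragraph: it is the one place where the geometry enters, and it depends on combining Minsky's tautological formula \eqref{eq:tautological_formula} with Kerckhoff's formula \eqref{eq:Kerckhoff_formula} in precisely the way that makes the bound free of $y$. Once this $y$-uniform modulus of continuity is secured, the passage to the boundary and the identification of the Arzel\`a--Ascoli limit are routine.
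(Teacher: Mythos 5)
Your proof is correct, and it actually establishes more, in a self-contained way, than the argument the paper displays. The first step is identical to the paper's: using \eqref{eq:kerckhoff_formula_sharp} to get $\max_{F\in\mf_1}\mathcal{E}_y(F)=1$ for interior points, evaluating the (E2)-convergence on the compact set $\mf_1$ to pin down the scalar $t_0$, and rescaling so that \eqref{eq:maximal_E_p} holds; this is exactly the paper's normalization argument, and the paper essentially stops there, concluding that $\mathcal{E}_{y_n}\to\mathcal{E}_p$ uniformly on compacta for \emph{interior} sequences $y_n\to p$ (the normal-family input is hidden inside (E2), which is imported from \cite{Mi1}/\cite{Mi4}), with the boundary-to-boundary case and the joint continuity left implicit. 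What you add, and what the paper's proof does not contain, is the $y$-independent modulus of continuity
$$
|\mathcal{E}_y(F)-\mathcal{E}_y(G)|\le \sup_{H\in\mf_1}|I(F,H)-I(G,H)|,
$$
derived by rewriting \eqref{eq:tautological_formula} as $\ext_y(\,\cdot\,)^{1/2}=\sup_{\ext_y(H)=1}I(\,\cdot\,,H)$ and using Kerckhoff's formula \eqref{eq:Kerckhoff_formula} to bound the reweighting factor $\bigl(K_y\,\ext_y(\hat{H})\bigr)^{-1/2}\le 1$; I checked this computation and it is correct. This estimate makes the whole family $\{\mathcal{E}_p\}_{p\in\cl{\TeichbX}}$ equicontinuous and uniformly bounded on compacta, so Arzel\`a--Ascoli applies to \emph{arbitrary} sequences in the closure, which is what yields the full joint continuity claimed in the proposition rather than only the interior-sequence statement. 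The trade-off: the paper's proof is shorter because (E2) already packages the compactness it needs, while yours replaces that black box by an explicit geometric estimate and so covers all cases at once.

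One step should be made explicit in your identification of the Arzel\`a--Ascoli limit $\phi$ with $\mathcal{E}_p$. First, you need $\phi|_{\mathcal{S}}\ne 0$: this does follow from your normalization $\sup_{\mf_1}\phi=1$, since a degree-one homogeneous continuous function vanishing on $\mathcal{S}$ vanishes on the dense set $\mathbb{R}_+\otimes\mathcal{S}$ and hence on all of $\mf$. Second, the phrase ``both represent $p$'' uses uniqueness of limits: continuity of $\proj$ gives $p_n=\proj(\mathcal{E}_{p_n}|_{\mathcal{S}})\to\proj(\phi|_{\mathcal{S}})$ in ${\rm P}\mathbb{R}_+^{\mathcal{S}}$, and since also $p_n\to p$ you must know these two limits agree. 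This can be justified (one can check that non-proportional nonzero elements of $\mathbb{R}_+^{\mathcal{S}}$ are separated by scale-invariant two-coordinate ratio functions, so ${\rm P}\mathbb{R}_+^{\mathcal{S}}$ is Hausdorff), or avoided entirely by a diagonal argument approximating each boundary $p_n$ by an interior point and invoking the already-proved interior convergence. Either patch is routine, so I regard this as a matter of exposition rather than a gap.
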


\begin{proof}
%The condition (2) follows from (1).
%Hence,
%we only show the conditon (1).
%
We normalize $\mathcal{E}_p$ such that
\begin{equation} \label{eq:maximal_E_p}
\max_{F\in \mf_1}
\mathcal{E}_p(F)=1.
\end{equation}
Notice from \eqref{eq:kerckhoff_formula_sharp}
that $\max_{F\in \mf_1}\mathcal{E}_y(F)=1$
for all $y\in \TeichbX$.
Let $\{y_n\}_{n=1}^\infty$ be a sequence
that converges to $p\in \partialGM{\TeichbX}$.
From the condition (E2) above,
there are a subsequence $\{y_{n_j}\}_j$ and $t_0>0$
such that $\mathcal{E}_{y_{n_j}}$ converges to $t_0\mathcal{E}_p$
uniformly on any compact set of $\mf$,
and hence
$$
1=\max_{F\in \mf_1}\mathcal{E}_{y_{n_j}}(F)\to
t_0\,\max_{F\in \mf_1}\mathcal{E}_p(F)=t_0.
$$
This implies that $\mathcal{E}_{y_n}$ converges to
$\mathcal{E}_p$ on any compact set of $\mf$.
\qed
\end{proof}

\begin{convention}
\label{convention:2}
In what follows,
we normalize $\mathcal{E}_p$ as in \eqref{eq:maximal_E_p}
for all $p\in \partialGM{\TeichbX}$.
\end{convention}

%\subsection{Properties of $\mathcal{E}_p$}
%Here we collect two properties of $\mathcal{E}_p$.
%\begin{lemma}
%\label{lem:measured_foliation}

%\paragraph{{\bf (1)}}
For instance,
for $G\in \mf$ it holds
\begin{equation}
\label{eq:E-p-For-measured-foliation}
\mathcal{E}_{[G]}(F)=\mathcal{E}^{x_0}_p(F)=\frac{I(F,G)}{\ext_{x_0}(G)^{1/2}}
\quad (F\in \mf).
\end{equation}
%where the intersection number in the right-hand side
%is the usual intersection number function on $\mf\times \mf$.
%\end{lemma}
%\begin{proof}[Proof of Lemma \ref{lem:measured_foliation}]
%\begin{proof}
Indeed,
by definition,
there is a positive number $t_0$ such that
$\mathcal{E}_{[G]}(F)=t_0\,I(F,G)$ for all $F\in \mf$.
By \eqref{eq:minsky_inequality_sharp} and Convention \ref{convention:2},
we obtain
$$
1=\max_{F\in \mf_1}\mathcal{E}_{[G]}(F)
=t_0\max_{F\in \mf_1}I(F,G)
=t_0\ext_{x_0}(G)^{1/2}.
$$
%\end{proof}

%\paragraph{{\bf (2)}}
The following is proven in \cite{Mi4}
by the similar argument as the case of the Thurston compactification
(cf. \cite{FLP}).
%However,
%for completeness,
%we shall give a proof.

\begin{proposition} \label{prop:zero-boundary}
For $p\in\cl{\TeichbX}$,
the following are equivalent.
\begin{itemize}
\item[{\rm (1)}]
$p\in \partialGM{\Teich{g}{m}}$;
\item[{\rm (2)}]
there is an $F\in \mf-\{0\}$ with $\mathcal{E}_p(F)=0$.
\end{itemize}
\end{proposition}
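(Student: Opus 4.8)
The plan is to establish the equivalence by proving the two contrapositives, working throughout with the normalized function $\mathcal{E}_p$ from Convention \ref{convention:2}. The key structural fact I would exploit is the dichotomy between points coming from $\TeichbX$ itself and genuine boundary points, detected precisely by whether $\mathcal{E}_p$ vanishes on some nonzero measured foliation.

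First I would show that (1) implies (2), equivalently that if $\mathcal{E}_p$ is strictly positive on $\mf-\{0\}$ then $p\in \TeichbX$. Suppose $\mathcal{E}_p(F)>0$ for every $F\in \mf-\{0\}$. The strategy is to recognize $\mathcal{E}_p$ as the square root of an extremal length function of an actual point of Teichm\"uller space. I would use the interpretation of $\mathcal{E}_p$ as a limit: take a sequence $\{y_n\}$ in $\TeichbX$ converging to $p$ with $\mathcal{E}_{y_n}\to \mathcal{E}_p$ uniformly on compacta of $\mf$, as guaranteed by (E2) and Proposition \ref{prop:continuity_E_p}. Since $\mathcal{E}_p$ is bounded away from $0$ on the compact set $\mf_1$, the normalization $\max_{F\in \mf_1}\mathcal{E}_{y_n}(F)=1$ together with Kerckhoff's formula \eqref{eq:Kerckhoff_formula} forces the ratios $\ext_{y_n}(F)/\ext_{y_m}(F)$ to stay bounded, so $\{y_n\}$ stays within bounded Teichm\"uller distance of $x_0$; by completeness and the properness of $\Phi_{GM}$ on bounded sets, a subsequence converges in $\TeichbX$ to some $y_\infty$, whence $p=\Phi_{GM}(y_\infty)\in \TeichbX$, contradicting $p\in \partialGM{\TeichbX}$ and establishing the implication.

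Conversely, I would show (2) implies (1), equivalently that every interior point $p=\Phi_{GM}(y)$ has $\mathcal{E}_p$ strictly positive on $\mf-\{0\}$. This direction is immediate from the definition \eqref{eq:E_y}: for $y\in\TeichbX$ we have $\mathcal{E}_y(F)=\{\ext_y(F)/K_y\}^{1/2}$, and since $\ext_y(F)>0$ for every nonzero measured foliation $F$ (extremal length vanishes only on the zero foliation), we get $\mathcal{E}_p(F)>0$ throughout $\mf-\{0\}$. Thus no interior point can satisfy (2), which is the contrapositive of what we want.

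The main obstacle I expect is the compactness/boundedness control in the first implication: one must rule out the possibility that $\{y_n\}$ escapes to infinity even while $\mathcal{E}_p$ remains positive. The delicate point is that positivity of $\mathcal{E}_p$ on the \emph{compact} set $\mf_1$ (not merely pointwise positivity) is what yields a uniform lower bound, and converting this into a bound on $d_T(x_0,y_n)$ requires carefully pairing Kerckhoff's formula with the sharp case \eqref{eq:kerckhoff_formula_sharp}. I would handle this by arguing that if $d_T(x_0,y_n)\to\infty$ along the sequence, then the foliation $F_n\in\mf_1$ realizing the maximum in \eqref{eq:kerckhoff_formula_sharp} would, after passing to a convergent subsequence $F_n\to F_\infty$, force $\mathcal{E}_p(F_\infty)=0$, contradicting the assumed positivity.
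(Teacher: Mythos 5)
Your proof is correct, but note what the paper actually does here: it gives no argument at all, deferring to \cite{Mi4} with the remark that the proof goes ``by the similar argument as the case of the Thurston compactification'' in \cite{FLP}. Your proposal supplies a self-contained proof of exactly that flavor, built entirely from ingredients already present in the paper. The direction (2)$\Rightarrow$(1) is, as you say, immediate: for $y\in\TeichbX$ the normalized function is $\mathcal{E}_y(F)=\{\ext_y(F)/K_y\}^{1/2}$, and $\ext_y(F)>0$ for $F\in\mf-\{0\}$ (pick $\alpha\in\mathcal{S}$ with $I(F,\alpha)>0$ and apply \eqref{eq:minsky_inequality}). For (1)$\Rightarrow$(2), the decisive mechanism is the one you isolate in your final paragraph, and it is sound: if $y_n\to p\in\partialGM{\TeichbX}$ then $d_T(x_0,y_n)\to\infty$ (otherwise a subsequence stays in a closed ball, which is compact, so it converges to some $y_\infty\in\TeichbX$, and since $\Phi_{GM}$ is an embedding this forces $p=\Phi_{GM}(y_\infty)$, contradicting that $p$ is a boundary point); choosing $F_n\in\mf_1$ realizing \eqref{eq:kerckhoff_formula_sharp} for the pair $(x_0,y_n)$ gives $\ext_{y_n}(F_n)=e^{-2d_T(x_0,y_n)}$, hence $\mathcal{E}_{y_n}(F_n)=e^{-2d_T(x_0,y_n)}\to 0$, and compactness of $\mf_1$ together with the joint continuity of $(q,F)\mapsto\mathcal{E}_q(F)$ from Proposition \ref{prop:continuity_E_p} yields $F_\infty\in\mf_1$ with $\mathcal{E}_p(F_\infty)=0$. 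Two small corrections of phrasing rather than substance: the sub-convergence of a bounded sequence uses properness of $(\TeichbX,d_T)$ (closed balls are compact, which holds since $d_T$ is a complete, locally compact geodesic metric), not ``completeness and the properness of $\Phi_{GM}$ on bounded sets''; and the intermediate claim in your first paragraph that the normalization ``forces the ratios $\ext_{y_n}(F)/\ext_{y_m}(F)$ to stay bounded'' is too vague to carry weight on its own, but it is superseded by the precise limiting argument you give afterwards, so the proof stands.
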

%
%\begin{proof}
%By applying Minsky's inequality,
%one can easily see that
%for $y\in \Teich{g}{m}(X)$,
%$\mathcal{E}_y(F)=0$ if and only if $F=0$.
%Therefore,
%(2) implies (1).
%
%Conversely,
%let $\{y_n\}_{n=1}^\infty$ be a sequence converging to $p\in \partialGM{\Teich{g}{m}}$.
%Then,
%by a theorem of Bers \cite{Bers},
%there is a curve $\alpha_n\in \mathcal{S}$
%such that $\ext_{y_n}(\alpha_n)=O(1)$
%as $n\to \infty$.
%By taking a subsequence if necessary,
%we find $t_n>0$ such that $t_n\alpha_n\to F
%\in \mf-\{0\}$.
%This means that $t_n=O(1)$
%since $\ext_{x_0}(t_n\alpha_n)\to \ext_{x_0}(F)$
%and $\ext_{x_0}(\alpha_n)$ is bounded below by a uniform positive constant.
%From the property (E2) above,
%we have
%$$
%\mathcal{E}_p(F)=
%\lim_{n\to \infty}\frac{t_n\ext_{y_n}(\alpha_n)^{1/2}}{K_{y_n}^{1/2}}= 0
%$$
%as $n\to \infty$ and we are done.
%\qed
%\end{proof}

%\begin{remark}
%The reason why we pay attention to the definition on the intersection number in Lemma
%\ref{lem:measured_foliation}
%is that we will extend the intersection number function on $\GmInv$,
%where is the cone defined in the next section,
%and we will use the same notation for representing the extension
%(cf. \eqref{eq:intersection_number_pre-definition} and Theorem \ref{prop:pairing}).
%\end{remark}

\section{Cones $\GmInv$, $\GmTeich$ and $\Gmbdy$}
\label{sec:cones}

%\subsection{Distance on $\mathbb{R}_+^{\mathcal{S}}$}
%The space $\mathbb{R}_+^{\mathcal{S}}$
%admits a distance
%$$
%d_{\mathcal{S}}(f,g)=\sup
%\left\{
%\frac{|f(\alpha)-g(\alpha)|}{\ext_{x_0}(\alpha)^{1/2}}\mid
%\alpha\in \mathcal{S}
%\right\}.
%$$
%We claim
%
%\begin{lemma}
%The metric topology on $(\mathbb{R}_+^{\mathcal{S}},d_{\mathcal{S}})$ coincides with the product topology on $\mathbb{R}_+^{\mathcal{S}})$
%(i.e. the topology of pointwise convergence).
%\end{lemma}
%
%\begin{proof}
%Suppose that 
%
%
%\end{proof}

\subsection{Cones}
\label{subsec:cones}
%Let $\proj:\mathbb{R}_+^{\mathcal{S}}-\{0\}\to {\rm P}\mathbb{R}_+^{\mathcal{S}}$
%be the projection.
Define
\begin{align}
\GmInv
&=\proj^{-1}(\cl{\TeichbX})\cup \{0\}\subset \mathbb{R}_+^{\mathcal{S}}
\label{eq:definition_GmInv} \\
\GmTeich
&=\proj^{-1}(\TeichbX)\cup \{0\}\subset \mathbb{R}_+^{\mathcal{S}}
\label{eq:definition_GmTeich} \\
\Gmbdy
&=
\proj^{-1}(\partialGM{\TeichbX})\cup \{0\}\subset \GmInv
\subset \mathbb{R}_+^{\mathcal{S}}.
\label{eq:definition_Gmbdy}
\end{align}
We topologize $\GmInv$, $\GmTeich$ and $\Gmbdy$
with the topology induced from $\mathbb{R}_+^{\mathcal{S}}$.
Notice that $\mf$
is contained in $ \Gmbdy$
as a closed subset
since $\pmf\subset \partialGM{\TeichbX}$.
In particular,
any $G\in \mf$
is nothing other than an assignment
\begin{equation}
\label{eq:embedded_MF}
\mathcal{S}\ni \alpha\mapsto I(\alpha,G).
\end{equation}
%(cf. \S\ref{subsec:measured_foliation}).
%The embedding \eqref{eq:Lift_GM}
%is rewritten as
%%$\Psi_{x_0}\colon \cl{\TeichbX}\to \GmInv$ by
%\begin{equation} \label{eq:embedding}
%\Psi_{x_0}:
%\cl{\TeichbX}\ni p\mapsto [\mathcal{S}\ni \alpha\mapsto \mathcal{E}_p(\alpha)]\in \GmInv.
%\end{equation}
%By definition,
%the embedding \eqref{eq:embedding}
%%is a lift of the embedding
%%\eqref{eq:GM-embedding}.
%%However,
%%it 
%depends on the basepoint $x_0$.

%\begin{lemma}[Image of $\pmf$]
%\label{lem:image_of_PMF}
%For $[G]\in \pmf\subset \partial_{GM}\TeichbX$,
%we have
%$\Psi_{x_0}([G])\in \mf_1\subset \Gmbdy$.
%%,
%%where $\mf_1$ is identified with a subset in $\GmInv$
%%via the embedding \eqref{eq:embedded_MF}.
%\end{lemma}
%
%\begin{proof}
%From Lemma \ref{lem:measured_foliation},
%the projective class $[G]\in \pmf\subset \partial_{GM}\TeichbX$
%corresponds to the assignment
%\begin{equation}
%\label{eq:lifts_measured_foliation}
%\mathcal{S}\ni \alpha\mapsto \frac{I(\alpha,G)}{\ext_{x_0}(G)^{1/2}}
%=I\left(
%\alpha,\ext_{x_0}(G)^{-1/2}\cdot G
%\right)
%\end{equation}
%%where the intersection number function in the right-hand side is the usual one.
%Therefore,
%we deduce from \eqref{eq:embedded_MF}
%that $\Psi_{x_0}([G])\in \mf_1$.
%\end{proof}

\subsection{Models of $\GmInv$, $\GmTeich$ and $\Gmbdy$}
\label{subsec:model_cones}
We define models of cones by
\begin{align*}
\GmInvC
&=\cl{\TeichbX}\times \mathbb{R}_+/(\cl{\TeichbX}\times \{0\}) \\
\GmTeichC
&=\TeichbX\times \mathbb{R}_+/(\TeichbX\times \{0\}) \\
\GmbdyC
&=\partialGM{\TeichbX}
\times \mathbb{R}_+/(\partialGM{\TeichbX}\times \{0\}) \\
\mfC
&=\pmf\times \mathbb{R}_+/(\pmf\times \{0\}) \\
\pmfC
&=\pmf\times \{1\}
\end{align*}
\begin{figure}
\begin{center}
\includegraphics[height=5cm]{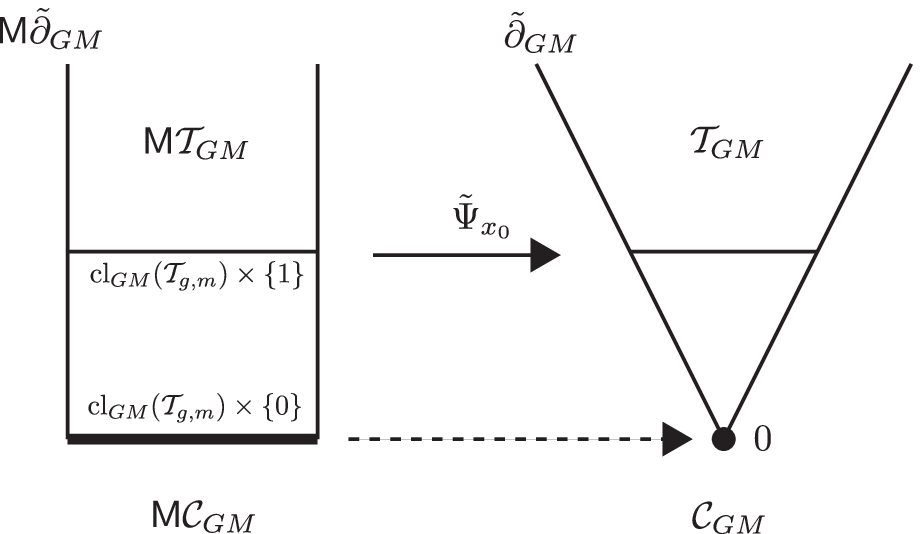}
\caption{Models and the model map of the cone $\GmInv$.}
\label{fig:Modelcone}
\end{center}
\end{figure}
(see Figure \ref{fig:Modelcone}).
%By definition,
%$\GmInvC$ is a union of $\GmTeichC$ and $\GmbdyC$.
%Furthermore,
Since $\pmf\subset \partialGM{\TeichbX}$,
$\pmfC\subset \mfC\subset \GmbdyC$.
In this setting,
we often identify $\cl{\TeichbX}$ with the slice
$\cl{\TeichbX}\times \{1\}$ of $\GmInvC$.

We abbreviate the point $(p,t)\in \GmInvC$ to $tp$.
We denote $1\,p$ by $p$ for the simplicity.
For $s\ge 0$ and $\zeta=tp\in \GmInvC$ with $t\ge 0$ and $p\in \cl{\TeichbX}$,
we define the multiplication $s\zeta$ by
$s\zeta=(st)p$.

From Proposition \ref{prop:continuity_E_p} and \eqref{eq:E_y},
the embedding \eqref{eq:Lift_GM} is continuous.
Therefore,
we have a continuous bijection (the \emph{model map})
$$
\tilde{\Psi}_{x_0}\colon
\GmInvC
\to \GmInv
$$
defined by
\begin{equation}
\label{eq:homeomorphism_cone}
\tilde{\Psi}_{x_0}(tp)=\tilde{\Psi}_{x_0}(p,t)=t\cdot \Psi_{x_0}(p)=[\mathcal{S}\ni \alpha\mapsto t\mathcal{E}_p(\alpha)].
\end{equation}
By definition,
$\tilde{\Psi}_{x_0}$ is homogeneous in the sense that
$$
\tilde{\Psi}_{x_0}(t\zeta)=t\tilde{\Psi}_{x_0}(\zeta)
$$
for $t\ge 0$ and $\zeta\in \GmInvC$
and satisfies $\tilde{\Psi}_{x_0}(p)=\Psi_{x_0}(p)$ for $p\in \cl{\TeichbX}$.
Since $\cl{\TeichbX}$ is compact,
%$\mathbb{R}_+$ is locally compact
%and $\GmInv$ is Hausdorff,
the bijection $\tilde{\Psi}_{x_0}$ is a homeomorphism.
It follows from \eqref{eq:E-p-For-measured-foliation}
%Lemma \ref{lem:measured_foliation}
that
\begin{equation}
\label{eq:psi_mfC_to_MF}
\tilde{\Psi}_{x_0}(s[F])=s\,\ext_{x_0}(F)^{-1/2}\cdot F\in \mf
\end{equation}
for $s[F]\in \mfC$ and
hence $\tilde{\Psi}_{x_0}(\mfC)=\mf$.
%From Lemma \ref{lem:measured_foliation},
In particular,
we deduce the following.

\begin{lemma}[Image of $\pmfC$]
\label{lem:image_of_PMF}
For $[G]\in \pmfC$,
we have
$\tilde{\Psi}_{x_0}([G])\in \mf_1\subset \Gmbdy$.
%,
%where $\mf_1$ is identified with a subset in $\GmInv$
%via the embedding \eqref{eq:embedded_MF}.
\end{lemma}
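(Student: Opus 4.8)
The plan is to read off the value of $\tilde{\Psi}_{x_0}([G])$ directly from the formula \eqref{eq:psi_mfC_to_MF} and then verify the normalization defining $\mf_1$ by a one-line computation using the quadratic homogeneity of extremal length. The point is that the work has already been done in \eqref{eq:psi_mfC_to_MF}; all that remains is to specialize it and check the normalization.

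First I would observe that a point of $\pmfC=\pmf\times\{1\}$ is precisely an element $s[F]\in\mfC$ with weight $s=1$. Thus for $[G]\in\pmfC$ I may apply \eqref{eq:psi_mfC_to_MF} with $s=1$ and with any representative $G\in\mf-\{0\}$ of the projective class, obtaining
\[
\tilde{\Psi}_{x_0}([G])=\ext_{x_0}(G)^{-1/2}\cdot G\in\mf.
\]
Before proceeding I would check that this expression does not depend on the chosen representative: replacing $G$ by $\lambda G$ with $\lambda>0$ scales the factor $\ext_{x_0}(G)^{-1/2}$ by $\lambda^{-1}$ and the foliation $G$ by $\lambda$, so their product is unchanged. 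Hence $\tilde{\Psi}_{x_0}([G])$ is a well-defined measured foliation.

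Next I would compute its extremal length on the basepoint $x_0$. Using the homogeneity $\ext_{x_0}(tF)=t^2\ext_{x_0}(F)$, which holds on all of $\mf$ by the continuous extension of the extremal length function established by Kerckhoff, I find
\[
\ext_{x_0}\!\big(\tilde{\Psi}_{x_0}([G])\big)
=\ext_{x_0}\!\big(\ext_{x_0}(G)^{-1/2}\cdot G\big)
=\ext_{x_0}(G)^{-1}\cdot\ext_{x_0}(G)=1,
\]
so $\tilde{\Psi}_{x_0}([G])\in\mf_1$ by the definition \eqref{eq:MF1}. Finally, since $\pmf\subset\partialGM{\Teich{g}{m}}$ forces $\mf\subset\Gmbdy$ (as recorded just after \eqref{eq:definition_Gmbdy}) and trivially $\mf_1\subset\mf$, the inclusion $\mf_1\subset\Gmbdy$ is immediate, which completes the argument.

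There is essentially no hard step here, the substance being entirely contained in the already-proved identity \eqref{eq:psi_mfC_to_MF} together with the scaling law of extremal length. The only point demanding a moment's care is the well-definedness check, namely that the normalized foliation $\ext_{x_0}(G)^{-1/2}\cdot G$ is independent of the representative of $[G]$; once the homogeneity of $\ext_{x_0}$ is invoked this becomes transparent.
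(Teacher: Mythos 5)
Your proof is correct and matches the paper's own (implicit) argument: the paper deduces this lemma directly from \eqref{eq:psi_mfC_to_MF} with $s=1$, exactly as you do, with the membership in $\mf_1$ following from the quadratic homogeneity $\ext_{x_0}(tF)=t^2\ext_{x_0}(F)$ and the inclusion $\mf_1\subset\mf\subset\Gmbdy$ from the remark after \eqref{eq:definition_Gmbdy}. Your explicit well-definedness check under rescaling of the representative is a small addition the paper leaves tacit, but it is the same route.
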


%\begin{proof}
%From Lemma \ref{lem:measured_foliation},
%the projective class $[G]\in \pmfC$
%corresponds to the assignment
%\begin{equation}
%\label{eq:lifts_measured_foliation}
%\mathcal{S}\ni \alpha\mapsto \frac{I(\alpha,G)}{\ext_{x_0}(G)^{1/2}}
%=I\left(
%\alpha,\ext_{x_0}(G)^{-1/2}\cdot G
%\right).
%\end{equation}
%%where the intersection number function in the right-hand side is the usual one.
%Therefore,
%we deduce from \eqref{eq:embedded_MF}
%that $\tilde{\Psi}_{x_0}([G])\in \mf_1$.
%\qed
%\end{proof}

\begin{remark}
From the identification \eqref{eq:homeomorphism_cone},
we recognize $\GmInv$,
$\GmTeich$ and $\Gmbdy$
as cones with slices $\cl{\TeichbX}$,
$\TeichbX$ and $\partialGM{\TeichbX}$,
respectively.
%For $t\ge 0$ and $p\in \cl{\TeichbX}$,
%we denote by
%$$
%tp
%$$
%the image $\tilde{\Psi}_{x_0}(p,t)\in \GmInv$ under the identification
%\eqref{eq:homeomorphism_cone}.
%We denote $1\cdot p$ by $p$ for the simplicity.
%Furthermore,
%for $s\ge 0$ and $\zeta=tp\in \GmInv$ with $t\ge 0$ and $p\in \cl{\TeichbX}$,
%we define the multiplication $s\zeta$ by
%$$
%s\zeta=(st)p.
%$$
\emph{Notice that this identification depends on the choice of the basepoint $x_0$
%because so is the embedding $\Psi_{x_0}$ in 
(cf. \eqref{eq:Lift_GM}}).
\end{remark}

\section{Intersection number and Extremal length associated to a basepoint}
\label{sec:extremal_length_GmInv}
In this section,
we define the intersection number  on $\GmInvC\times \mfC$ and
the extremal length for elements in $\GmInvC$ associated to
the basepoint $x_0$.
We will extend the intersection number given here to
the whole $\GmInvC\times \GmInvC$ in \S\ref{subsec:extension_i_x_0}.

\subsection{Intersection number associated to the basepoint}
\label{subsec:intersection_number_associated_to_basepoint}
For $\zeta=tp\in \GmInvC$ ($t\ge 0$ and $p\in \cl{\TeichbX}$)
and $\eta\in \mfC$,
we define the \emph{intersection number
associated to the basepoint $x_0$}
by
\begin{equation}
\label{eq:intersection_number_pre-definition}
i_{x_0}(\zeta,\eta)=i_{x_0}(tp,\eta)=
t\,\mathcal{E}_p
\left(\tilde{\Psi}_{x_0}(\eta)
\right)
=t\,\mathcal{E}^{x_0}_p
\left(\tilde{\Psi}_{x_0}(\eta)
\right).
\end{equation}
The intersection number \eqref{eq:intersection_number_pre-definition}
depends on the basepoint $x_0$.
Indeed,
%by Lemma \ref{lem:measured_foliation},
%\begin{equation}
%\label{eq:psi_mfC_to_MF}
%\tilde{\Psi}_{x_0}(s[F])=(s\,\ext_{x_0}(F)^{-1/2})\cdot F\in \mf
%\end{equation}
By \eqref{eq:psi_mfC_to_MF},
we have
% and
\begin{align}
\label{eq:intersection_teichmuller_measured_foliation}
i_{x_0}(ty,s[F])
&=t\,\mathcal{E}_y(\tilde{\Psi}_{x_0}(s[F]))
=t\,
\left\{\frac{\ext_y(s\ext_{x_0}(F)^{-1/2}\cdot F)}{K_y}
\right\}^{1/2} \nonumber \\
&=ts\cdot
e^{-d_T(x_0,y)}\,
\left(\frac{\ext_y(F)}{\ext_{x_0}(F)}
\right)^{1/2}
\nonumber
%\\
%&=ts\cdot
%\frac{1}{e^{d_T(x_0,y)}\,\ext_{x_0}(F)^{1/2}}\cdot
%\ext_y(F)^{-1/2}
%\nonumber
\end{align}
for $ty\in \GmTeichC$ and $s[F]\in \mfC$.
%Hence,
%if we emphasis this dependence,
%we use the symbol
%$$
%i_{x_0}(\,\cdot\,,\,\cdot\,)
%$$
%to represent the intersection number above.
%
By \eqref{eq:homeomorphism_cone},
$\zeta\in \GmInvC$ corresponds to the function
\begin{equation} \label{eq:identification_GmInv}
\mathcal{S}\ni \alpha\mapsto i_{x_0}(\zeta,
\tilde{\Psi}_{x_0}^{-1}(\alpha))
\end{equation}
in $\GmInv$ via $\tilde{\Psi}_{x_0}$.
From Proposition \ref{prop:continuity_E_p},
the assignment
$$
\GmInvC\times \mfC\ni (\zeta,\eta)\mapsto i_{x_0}(\zeta,\eta)
$$
is continuous.
Furthermore,
the intersection number \eqref{eq:intersection_number_pre-definition}
is \emph{homogeneous}
since
\begin{align*}
i_{x_0}(s_1\zeta,s_2\eta)
&=i_{x_0}((s_1t)p,s_2\eta)=(s_1t)\mathcal{E}_p(\tilde{\Psi}_{x_0}(s_2\eta)) \\
&=s_1s_2\cdot t\mathcal{E}_p(\tilde{\Psi}_{x_0}(\eta)) =s_1s_2i_{x_0}(\zeta,\eta)
\end{align*}
where $s_1,s_2\ge 0$,
$\zeta=tp$ with $t\ge 0$ and $p\in \cl{\TeichbX}$,
and $\eta\in\mfC$.
%\begin{remark}
%In \S\ref{subsec:IntersectionNumber_is_intrinsic},
%we will define the intersection number function for elements in $\GmInvC$,
%which is independent on the choise of the basepoint.
%\end{remark}

\begin{proposition}[Intersection number on $\mf$]
\label{prop:intersection_MF}
The intersection number function
\eqref{eq:intersection_number_pre-definition}
coincides with the original intersection number
function on $\mf\times \mf$
via $\tilde{\Psi}_{x_0}$.
Namely,
when $G=\tilde{\Psi}_{x_0}(\zeta)$
and $F=\tilde{\Psi}_{x_0}(\eta)$
with $\zeta,\eta\in \mfC$,
$$
i_{x_0}(\zeta,\eta)=I(G,F).
$$
\end{proposition}

\begin{proof}
%From Lemma \ref{lem:image_of_PMF},
%$$
%i(t[G],F)=\mathcal{E}_{t[G]}(F)=t\frac{i(F,G)}{\ext_{x_0}(G)^{1/2}}.
%$$
%Since 
%the projective class $[G]\in \pmf$ corresponds to the lift $G\in \mf$
%with $\ext_{x_0}(G)=1$ under the embedding \eqref{eq:embedding}.
%By Lemma \ref{lem:measured_foliation},
Notice from \eqref{eq:psi_mfC_to_MF}
that $\zeta=\tilde{\Psi}_{x_0}^{-1}(G)=\ext_{x_0}(G)^{1/2}\cdot [G]$.
By from \eqref{eq:E-p-For-measured-foliation},
%Lemma \ref{lem:measured_foliation},
we have
\begin{align*}
i_{x_0}(\zeta,\eta)
&=i_{x_0}(\ext_{x_0}(G)^{1/2}\cdot [G],\eta)
=\ext_{x_0}(G)^{1/2}\mathcal{E}_{[G]}(\tilde{\Psi}_{x_0}(\eta)) \\
&=\ext_{x_0}(G)^{1/2}\mathcal{E}_{[G]}(F)=I(G,F).
\hspace{\fill}\qed
\end{align*}
%which implies what we wanted.
%where the intersection number in the left-hand side and the second
%are the function defined in \eqref{eq:intersection_number_pre-definition}
%and that in the last term is the original intersection number between $G$ and $F$
%on $\mf$ (cf. Lemma \ref{lem:measured_foliation}).
%(see also \eqref{eq:lifts_measured_foliation}).
%This means that our function \eqref{eq:intersection_number_pre-definition}
%coincides with the original intersection number funciton on $\mf\times \mf$.
%Hence,
%the author believes that there is no confusion for readers with the notation on our ``new" interesection number function.
%\qed
\end{proof}

%By the definition of the identification \eqref{eq:homeomorphism_cone},
%$\zeta\in \GmInvC$ corresponds to the function
%$$
%\mathcal{S}\ni \alpha\mapsto i_{x_0}(\zeta,\alpha).
%$$
%From Proposition \ref{prop:continuity_E_p},
%the assignment
%$$
%\GmInvC\times \mf\ni (\zeta,F)\mapsto i_{x_0}(\zeta,F)
%$$
%is continuous.

%We start with the following lemma.

\subsection{Extremal length on $\GmInvC$ associated to the basepoint}
\label{subsec:extremal_length_GmInv}
For $\zeta\in \GmInvC$,
we define the \emph{extremal length of $\zeta$
on $ty\in \GmTeichC$ associated to the basepoint $x_0$}
by
\begin{equation}
\label{eq:extremal_length_cone}
\extB^{x_0}_{ty}(\zeta)=
t^2\cdot\max_{\eta\in \pmfC}
\frac{i_{x_0}(\zeta ,\eta)^2}{\ext_y(\tilde{\Psi}_{x_0}(\eta))}
=
t^2\cdot\sup_{F\in \mfC-\{0\}}
\frac{i_{x_0}(\zeta ,\eta)^2}{\ext_y(\tilde{\Psi}_{x_0}(\eta))}.
\end{equation}
%As noted in \S\ref{subsec:plan},
%this definition is given by the imitating the formula
%\eqref{eq:tautological_formula}.
Then,
$\extB^{x_0}_{ty}(\,\cdot\,)$ is homogeneous and satisfies
\begin{equation}
\label{eq:minsky-inequality_mfC}
i_{x_0}(\zeta ,\eta)^2\le
\extB^{x_0}_{y}(\zeta)\cdot \ext_y(\tilde{\Psi}_{x_0}(\eta))
\end{equation}
for all
$y\in \TeichbX$,
$\zeta\in \GmInvC$ and $\eta\in \mfC$.
%Since the function in the supremum \eqref{eq:extremal_length_cone}
%depends on the projective class of $\eta$,
%for any $\zeta\in \mfC$ and $ty\in \GmTeichC$,
Since $\mfC_1$ is compact,
for every $\zeta\in \GmInvC$,
there is an $\eta\in \mfC-\{0\}$ such that
$$
\extB^{x_0}_{ty}(\zeta)=t^2\frac{i_{x_0}(\zeta ,\eta)^2}
{\ext_y(\tilde{\Psi}_{x_0}(\eta))}
$$
or
\begin{equation}
\label{eq:equality_definition_extremal_length_mfC}
i_{x_0}(\zeta ,\eta)^2=\extB^{x_0}_{ty}(\zeta)\cdot
\ext_y(\tilde{\Psi}_{x_0}(\eta)).
\end{equation}
%By definition,
%we have the following \emph{Minsky's type inequality}
%\begin{equation} \label{eq:minsky-inequality}
%i_{x_0}(\zeta,F)^2\le \extB^{x_0}_{ty}(\zeta)\cdot \ext_{ty}(F)
%\end{equation}
%for all $\zeta\in \GmInvC$,
%$F\in \mf$,
%$ty\in \GmTeichC$.
%Notice that the equality of \eqref{eq:minsky-inequality} occurs for all $\zeta\in \GmInvC$
%since $\mf_1$ is compact.

\subsubsection{Basic properties}
We can easily see the following.

\begin{lemma}
\label{lem:measured_foliation_2}
%For $y\in \TeichbX$,
The following two properties hold.
\begin{itemize}
\item[{\rm (1)}]
For
$ty,sz\in \GmTeichC$ with $t,s\ge 0$ and $y,z\in \TeichbX$,
$$
\extB^{x_0}_{ty}(sz)
%=\max_{F\in \mf_1}
%=\frac{i_{x_0}(sz,F)^2}{\ext_y(F)}
=t^2s^2\,\exp(-2d_T(x_0,z)+2d_T(y,z)).
$$
\item[{\rm (2)}]
For $\zeta\in \mfC$
and $y\in \TeichbX$,
$$
\extB^{x_0}_{y}(\zeta)
%=\max_{F\in \mf_1}
%\frac{i_{x_0}(G,F)^2}{\ext_y(F)}
=\ext_{y}(\tilde{\Psi}_{x_0}(\zeta)).
$$
\end{itemize}
\end{lemma}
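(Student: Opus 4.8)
The plan is to deduce both identities directly from formulas already in hand, so that each reduces to a single substitution. For part (1) the two ingredients will be the explicit expression \eqref{eq:intersection_teichmuller_measured_foliation} for the intersection number of a point of $\GmTeichC$ against a measured foliation, and Kerckhoff's formula \eqref{eq:Kerckhoff_formula}. For part (2) the ingredients will be Proposition \ref{prop:intersection_MF}, which identifies $i_{x_0}$ with the geometric intersection number on $\mf$, together with the tautological formula \eqref{eq:tautological_formula}.

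For part (1), I would begin with the definition \eqref{eq:extremal_length_cone}, so that $\extB^{x_0}_{ty}(sz)$ equals $t^2$ times the maximum over $[F]\in\pmfC$ of $i_{x_0}(sz,[F])^2/\ext_y(\tilde\Psi_{x_0}([F]))$. Substituting \eqref{eq:intersection_teichmuller_measured_foliation} (with $sz$ in the role of the point and $[F]$ that of the foliation) into the numerator and \eqref{eq:psi_mfC_to_MF} into the denominator, and using that extremal length is homogeneous of degree two so that $\ext_y(\tilde\Psi_{x_0}([F]))=\ext_{x_0}(F)^{-1}\ext_y(F)$, the factors $\ext_{x_0}(F)$ cancel and I am left with
\[
\extB^{x_0}_{ty}(sz)
= t^2 s^2\, e^{-2d_T(x_0,z)}\max_{[F]\in\pmfC}\frac{\ext_z(F)}{\ext_y(F)}.
\]
Since the quotient $\ext_z(F)/\ext_y(F)$ is invariant under scaling of $F$, the maximum over the slice $\pmfC$ coincides with the supremum over $\mf-\{0\}$, which by \eqref{eq:Kerckhoff_formula} is $e^{2d_T(y,z)}$. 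Combining the exponentials yields the asserted value $t^2 s^2\exp(-2d_T(x_0,z)+2d_T(y,z))$.

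For part (2), I would put $t=1$ in \eqref{eq:extremal_length_cone} and set $G=\tilde\Psi_{x_0}(\zeta)\in\mf$, recalling from \eqref{eq:psi_mfC_to_MF} that $\tilde\Psi_{x_0}$ carries $\mfC$ onto $\mf$. By Proposition \ref{prop:intersection_MF}, for $\eta\in\mfC-\{0\}$ with $F=\tilde\Psi_{x_0}(\eta)$ one has $i_{x_0}(\zeta,\eta)=I(G,F)$, and as $\eta$ runs over $\mfC-\{0\}$ its image $F$ runs over $\mf-\{0\}$. Hence the defining supremum becomes $\sup_{F\in\mf-\{0\}}I(G,F)^2/\ext_y(F)$, which is precisely $\ext_y(G)$ by the tautological formula \eqref{eq:tautological_formula}; this gives $\extB^{x_0}_y(\zeta)=\ext_y(\tilde\Psi_{x_0}(\zeta))$.

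Because every step is a substitution into a previously established identity, I do not expect a genuine obstacle. The only places needing a little care are the bookkeeping of the normalizing factors $\ext_{x_0}(F)^{\pm1/2}$ introduced by the lift $\tilde\Psi_{x_0}$, and the passage between a maximum over the compact slice $\pmfC$ and a supremum over $\mf-\{0\}$; the latter is justified because the relevant quotients are homogeneous of degree zero in the foliation variable, and it is this homogeneity that lets Kerckhoff's formula and the tautological formula be applied in the form stated.
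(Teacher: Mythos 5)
Your proposal is correct and follows essentially the same route as the paper: part (1) is the same substitution of the explicit formula \eqref{eq:intersection_teichmuller_measured_foliation} into the definition \eqref{eq:extremal_length_cone} followed by Kerckhoff's formula \eqref{eq:Kerckhoff_formula}, and part (2) is exactly the paper's appeal to Proposition \ref{prop:intersection_MF} together with the tautological formula \eqref{eq:tautological_formula}. The only difference is that you spell out the cancellation of the normalizing factors $\ext_{x_0}(F)^{\pm 1}$ and the passage from $\pmfC$ to $\mf-\{0\}$, which the paper leaves implicit.
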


\begin{proof}
(1)\quad
Since $K_z=\exp(2d_T(x_0,z))$,
from Kerckhoff's formula,
we have
\begin{align*}
\extB^{x_0}_{ty}(sz)
&=
t^2\cdot \sup_{\eta\in \mfC-\{0\}}
\frac{i_{x_0}(sz,\eta)^2}{\ext_y(\Psi_{x_0}(\eta))}
=
t^2s^2\,
\sup_{F\in \mf-\{0\}}
\frac{\ext_z(F)}{K_z\ext_y(F)} \\
&=t^2s^2\,\exp(-2d_T(x_0,z)+2d_T(y,z)).
\end{align*}

\medskip
\noindent
(2)\quad
This follows form Proposition \ref{prop:intersection_MF} and
\eqref{eq:tautological_formula}.
%the sharpness of Minsky's inequality.
%Let $G_0=G/\ext_{x_0}(G)^{1/2}$.
%By Lemma \ref{lem:measured_foliation},
%we have
%\begin{align*}
%\max_{F\in \mf_1}
%\frac{\mathcal{E}_{G} (F)^2}{\ext_y(F)}
%&=
%\ext_{x_0}(G)
%\max_{F\in \mf_1}
%\frac{\mathcal{E}_{G_0} (F)^2}{\ext_y(F)} \\
%&=
%\ext_{x_0}(G)
%\max_{F\in \mf_1}
%\frac{i_{x_0}(F,G_0)^2}{\ext_y(F)} \\
%&=\ext_{x_0}(G)\cdot
%\ext_{y}(G_0)
%=\ext_y(G),
%\end{align*}
%since Minksy's inequality is sharp.
\qed
\end{proof}

We notice the following non-triviality of the extremal length
\eqref{eq:extremal_length_cone}.

\begin{lemma}[Non-triviality]
\label{lem:extremal_nontrivial}
Let $\zeta\in \GmInvC$.
If $\extB^{x_0}_{y}(\zeta)=0$ for some $y\in \TeichbX$,
then $\zeta=0$.
\end{lemma}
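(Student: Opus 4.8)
The plan is to show that the vanishing of $\extB^{x_0}_y(\zeta)$ forces the intersection number $i_{x_0}(\zeta,\,\cdot\,)$ to vanish identically on $\mfC$, and then to use the normalization of Convention \ref{convention:2} to conclude $\zeta=0$.

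First I would observe that in the definition \eqref{eq:extremal_length_cone} the denominator $\ext_y(\tilde{\Psi}_{x_0}(\eta))$ is strictly positive for every $\eta\in\mfC-\{0\}$. Indeed, $\tilde{\Psi}_{x_0}$ carries $\mfC$ onto $\mf$ by \eqref{eq:psi_mfC_to_MF}, sending nonzero elements to nonzero measured foliations, and the extremal length of a nonzero measured foliation on $y$ is positive. Consequently each term $i_{x_0}(\zeta,\eta)^2/\ext_y(\tilde{\Psi}_{x_0}(\eta))$ in the supremum is non-negative, and since that supremum equals $\extB^{x_0}_y(\zeta)=0$ by hypothesis, every term must vanish. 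Hence $i_{x_0}(\zeta,\eta)=0$ for all $\eta\in\mfC$.

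Next I would write $\zeta=tp$ with $t\ge 0$ and $p\in\cl{\TeichbX}$, so that by \eqref{eq:intersection_number_pre-definition} we have $i_{x_0}(\zeta,\eta)=t\,\mathcal{E}_p(\tilde{\Psi}_{x_0}(\eta))$. Using once more that $\tilde{\Psi}_{x_0}$ maps $\mfC$ onto $\mf$, the previous step gives $t\,\mathcal{E}_p(F)=0$ for every $F\in\mf$. If $t>0$ this would force $\mathcal{E}_p\equiv 0$ on $\mf$; but by Convention \ref{convention:2} we have $\max_{F\in\mf_1}\mathcal{E}_p(F)=1$, so $\mathcal{E}_p$ attains the value $1$ somewhere on $\mf$, a contradiction. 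Therefore $t=0$, which means $\zeta=0$ as a point of the cone $\GmInvC$.

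I expect no serious obstacle here; the only point requiring care is the strict positivity of the denominator, which rests on the identification \eqref{eq:psi_mfC_to_MF} together with the standard positivity of extremal length on nonzero measured foliations. The normalization furnished by Convention \ref{convention:2} is exactly what rules out the remaining possibility of a nonzero weight $t$, and it is worth emphasizing that this is the step where the specific choice of normalization \eqref{eq:maximal_E_p}, rather than mere proportionality of $\mathcal{E}_p$, is genuinely used.
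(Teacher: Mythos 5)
Your proof is correct and takes essentially the same route as the paper's: write $\zeta=tp$, deduce from the vanishing supremum that $t\,\mathcal{E}_p(F)=0$ for all $F\in\mf-\{0\}$, and conclude $t=0$ because $\mathcal{E}_p$ cannot vanish identically. The only cosmetic difference is the last step: the paper observes directly that $\mathcal{E}_p(\alpha)\ne 0$ for some $\alpha\in\mathcal{S}$ since $p\in\cl{\TeichbX}$, whereas you invoke the normalization $\max_{F\in\mf_1}\mathcal{E}_p(F)=1$ — and note that Convention \ref{convention:2} as stated covers only $p\in\partialGM{\TeichbX}$, though the same normalization holds for interior points by \eqref{eq:kerckhoff_formula_sharp}, so your argument goes through.
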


\begin{proof}
Take $t\ge 0$ and $p\in \cl{\TeichbX}$
with $\zeta=tp$.
Suppose $\extB^{x_0}_y(\zeta)=0$.
From \eqref{eq:intersection_number_pre-definition},
we have
\begin{align*}
0
=\extB^{x_0}_{y}(\zeta)
&=\sup_{\eta\in \mfC-\{0\}}
\frac{i_{x_0}(\zeta,\eta)^2}{\ext_y(\tilde{\Psi}_{x_0}(\eta))}
=\sup_{\eta\in \mfC-\{0\}}
\frac{\mathcal{E}_\zeta(\tilde{\Psi}_{x_0}(\eta))^2}{\ext_y(\tilde{\Psi}_{x_0}(\eta))} \\
&=\sup_{F\in \mf-\{0\}}
\frac{\mathcal{E}_\zeta(F)^2}{\ext_y(F)} 
=\sup_{F\in \mf-\{0\}}
\frac{t^2\,\mathcal{E}_{p}(F)^2}{\ext_y(F)}.
\end{align*}
Therefore,
we obtain
$$
t\,\mathcal{E}_{p}(F)=0
$$
for all $F\in \mf-\{0\}$.
On the other hand,
since $p\in \cl{\TeichbX}$,
$\mathcal{E}_{p}(\alpha)\ne 0$ for some $\alpha\in \mathcal{S}$,
and we get $t=0$.
Therefore,
$\zeta=tp=0$.
\qed
\end{proof}

\subsubsection{Continuity}
Notice that
the extremal length given in \eqref{eq:extremal_length_cone}
satisfies the \emph{distortion property}:
\begin{equation}
\label{eq:distortion_property_extension}
e^{-2d_T(y_1,y_2)}
\extB^{x_0}_{y_1}(\zeta)
\le
\extB^{x_0}_{y_2}(\zeta)
\le
e^{2d_T(y_1,y_2)}
\extB^{x_0}_{y_1}(\zeta)
\end{equation}
for $y_1,y_2\in \TeichbX$ and $\zeta\in \GmInvC$.
Indeed,
%from the distortion property of the extremal length $\ext_y(F)$
%for $F\in \mf$,
%
since $\tilde{\Psi}_{x_0}(\eta)\in \mf$ for $\eta\in \mfC$,
we have
$$
\ext_{y_1}(\tilde{\Psi}_{x_0}(\eta))\ge
e^{-2d_T(y_1,y_2)}\ext_{y_2}(\tilde{\Psi}_{x_0}(\eta))
$$
for all $\eta\in \mfC$.
Therefore,
we obtain
\begin{align*}
\extB^{x_0}_{y_2}(\zeta)
&=
\sup_{\eta\in \mfC-\{0\}}
\frac{i_{x_0}(\zeta ,\eta)^2}{\ext_{y_2}(\tilde{\Psi}_{x_0}(\eta))} \le 
e^{2d_T(y_1,y_2)}
\sup_{\eta\in \mfC-\{0\}}
\frac{i_{x_0}(\zeta ,\eta)^2}{\ext_{y_1}(\tilde{\Psi}_{x_0}(\eta))} \\
&=e^{2d_T(y_1,y_2)}\extB^{x_0}_{y_1}(\zeta).
\end{align*}

The following lemma immediately
follows
from Proposition \ref{prop:continuity_E_p}
and the above observation,
and we omit the proof.

\begin{lemma}[Continuity]
\label{lem:continuity}
The function
\begin{equation}
\label{eq:extremal_length_continuity}
\GmTeichC\times \GmInvC\ni (ty,\zeta)\mapsto \extB^{x_0}_{ty}(\zeta)
\end{equation}
is continuous.
\end{lemma}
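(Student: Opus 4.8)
The plan is to reduce the statement to a parametrized maximum argument on the slice $\TeichbX$, using the continuity of $i_{x_0}$ from Proposition \ref{prop:continuity_E_p}, and then to treat the cone point of $\GmTeichC$ separately by invoking the distortion property \eqref{eq:distortion_property_extension}. First I would work away from the cone point, where $\GmTeichC\setminus\{0\}$ carries the honest product coordinates $(y,t)\in\TeichbX\times\mathbb{R}_{>0}$ (the quotient in the definition only collapses $\TeichbX\times\{0\}$). Since $\extB^{x_0}_{ty}(\zeta)=t^{2}\,\extB^{x_0}_{y}(\zeta)$ by \eqref{eq:extremal_length_cone} and $t\mapsto t^{2}$ is continuous, it suffices to prove that $(y,\zeta)\mapsto\extB^{x_0}_{y}(\zeta)$ is continuous on $\TeichbX\times\GmInvC$. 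The ratio $i_{x_0}(\zeta,\eta)^{2}/\ext_{y}(\tilde{\Psi}_{x_0}(\eta))$ is invariant under the scaling $\eta\mapsto s\eta$, because $i_{x_0}$ is homogeneous of degree one in $\eta$ while $\tilde{\Psi}_{x_0}(s\eta)=s\,\tilde{\Psi}_{x_0}(\eta)$ makes the denominator homogeneous of degree two; this is exactly why the supremum over $\mfC-\{0\}$ in \eqref{eq:extremal_length_cone} equals the maximum over the compact section $\pmfC$.

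Next I would apply the classical fact that the maximum of a jointly continuous function over a fixed compact set depends continuously on the parameters. Put $\Phi(y,\zeta,\eta)=i_{x_0}(\zeta,\eta)^{2}/\ext_{y}(\tilde{\Psi}_{x_0}(\eta))$ on $\TeichbX\times\GmInvC\times\pmfC$. The numerator is continuous by the continuity of $i_{x_0}$ on $\GmInvC\times\mfC$ recorded after \eqref{eq:intersection_number_pre-definition}; the denominator is continuous in $(y,\eta)$, being $\ext_{y}$ evaluated at $\tilde{\Psi}_{x_0}(\eta)\in\mf_{1}$ (extremal length is jointly continuous on $\TeichbX\times\mf$, from Kerckhoff's continuity in the foliation variable together with the $e^{\pm2 d_T}$ distortion in the surface variable), and it is bounded away from $0$ on compacta since $\tilde{\Psi}_{x_0}(\eta)\in\mf_{1}\subset\mf-\{0\}$ has positive extremal length. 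Hence $\Phi$ is jointly continuous and, as $\pmfC$ is compact, $\extB^{x_0}_{y}(\zeta)=\max_{\eta\in\pmfC}\Phi(y,\zeta,\eta)$ is continuous on $\TeichbX\times\GmInvC$. Equivalently, \eqref{eq:distortion_property_extension} furnishes multiplicative control $e^{\pm2 d_T(y_1,y_2)}$ of the $y$-dependence that is uniform in $\zeta$, so that continuity in $\zeta$ at a fixed $y$ (the maximum over $\pmfC$) upgrades to joint continuity.

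The remaining and most delicate point is continuity at the cone point $0$ of $\GmTeichC$. For $w_{n}\to 0$ in $\GmTeichC$ and $\zeta_{n}\to\zeta_{0}$ in $\GmInvC$, write $w_{n}=s_{n}y_{n}$. Because $\GmTeichC$ carries the quotient topology of the cone over the \emph{non-compact} slice $\TeichbX$, the convergence $w_{n}\to0$ forces not only $s_{n}\to0$ but also that $\{y_{n}\}$ stays in a compact subset of $\TeichbX$ (otherwise a Urysohn-type pinching neighbourhood of $0$ would exclude infinitely many $w_{n}$); in particular $d_T(x_0,y_{n})$ remains bounded. Then \eqref{eq:distortion_property_extension} gives $\extB^{x_0}_{w_{n}}(\zeta_{n})=s_{n}^{2}\,\extB^{x_0}_{y_{n}}(\zeta_{n})\le s_{n}^{2}\,e^{2 d_T(x_0,y_{n})}\,\extB^{x_0}_{x_0}(\zeta_{n})$, and since $\zeta\mapsto\extB^{x_0}_{x_0}(\zeta)$ is continuous by the previous step (indeed $\extB^{x_0}_{x_0}(\zeta)$ is the square of the radial coordinate of $\zeta$), the factor $\extB^{x_0}_{x_0}(\zeta_{n})$ is bounded, so the right-hand side tends to $0=\extB^{x_0}_{0}(\zeta_{0})$.

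I expect this last step, rather than the maximum theorem, to be the true obstacle. The asserted continuity genuinely fails for the coarser subspace topology induced from $\GmInv\subset\mathbb{R}_{+}^{\mathcal{S}}$: a point $s_{n}y_{n}$ with $y_{n}$ escaping along a Teichm\"uller ray and $s_{n}\to0$ can send $\tilde{\Psi}_{x_0}(s_{n}y_{n})\to0$ in $\mathbb{R}_{+}^{\mathcal{S}}$ while $\extB^{x_0}_{s_ny_n}(\zeta)$ blows up (take $\zeta$ the projective class of the maximally stretched foliation). Thus the proof must use that the domain is the cone \emph{model} $\GmTeichC$ with its quotient topology, not its continuous-bijective image in $\mathbb{R}_{+}^{\mathcal{S}}$; the compactness of $\cl{\TeichbX}$ rescues the analogous statement on the second factor $\GmInvC$, but on the first factor the non-compactness of $\TeichbX$ is exactly what makes the cone-point analysis, via the distortion property, indispensable.
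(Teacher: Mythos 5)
Your proof is correct in substance and follows exactly the route the paper indicates: the paper omits the proof, stating that the lemma ``immediately follows'' from Proposition \ref{prop:continuity_E_p} and the distortion property \eqref{eq:distortion_property_extension}, and these are precisely your two ingredients (joint continuity of $i_{x_0}$ on $\GmInvC\times\mfC$ feeding a parametrized-maximum argument over the compact section $\pmfC$, plus the distortion property to control the $y$-dependence and the cone point). Your closing observation is moreover a genuine addition that the paper does not make explicit: the statement holds for the intrinsic quotient topology on $\GmTeichC$ but fails at the cone point for the subspace topology inherited from $\GmInvC\cong\GmInv$, as your example $s_ny_n$ with $y_n$ escaping along a Teichm\"uller ray and $s_n^2e^{2d_T(x_0,y_n)}\to\infty$ correctly shows.

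There is, however, one technical gap at the very step you call the true obstacle. Since $\TeichbX$ is not compact, the cone point of $\GmTeichC$ in the quotient topology is \emph{not} first countable (one can diagonalize a pinching neighborhood against any countable family of width functions), so verifying convergence along sequences $(w_n,\zeta_n)\to(0,\zeta_0)$, as you do, does not by itself yield continuity at $(0,\zeta_0)$. To legitimize the sequence argument you would need to know that $\GmTeichC\times\GmInvC$ is sequential --- which is true, since $\GmInvC$ is locally compact so that the quotient map times the identity is again a quotient map by Whitehead's theorem, and quotients of metrizable spaces are sequential --- but this is a nontrivial point that must be stated. The cleaner repair is a direct neighborhood estimate, which also displays exactly why the quotient topology makes the lemma true: given $\epsilon>0$, take a neighborhood $V$ of $\zeta_0$ with compact closure and set $M=\sup_{\zeta\in V}\extB^{x_0}_{x_0}(\zeta)$, finite by the continuity already established on the slice $\{x_0\}\times\GmInvC$; then
\[
W=\{\,ty\in\GmTeichC\mid t^2e^{2d_T(x_0,y)}M<\epsilon\,\}\cup\{0\}
\]
is open in the quotient topology (it is precisely a pinching neighborhood), and \eqref{eq:distortion_property_extension} gives $\extB^{x_0}_{ty}(\zeta)\le t^2e^{2d_T(x_0,y)}\extB^{x_0}_{x_0}(\zeta)<\epsilon$ for all $(ty,\zeta)\in W\times V$. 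With this substitution (or with the sequentiality remark added), your proof is complete; the rest --- the product coordinates off the cone point, the homogeneity of the ratio, the maximum theorem over the compact set $\pmfC$, and the positivity of the denominator via Lemma \ref{lem:image_of_PMF} --- is correct as written.
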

%
%\begin{proof}
%When a sequence $\{\zeta_n\}_{n=1}^\infty$ in $\GmInvC$
%converges to $\zeta$,
%$\mathcal{E}_{\zeta_n}$ converges to $\mathcal{E}_\zeta$
%uniformly on any compact set of $\mf$.
%This means that
%$$
%\GmInvC\ni \zeta\mapsto \extB^{x_0}_{ty}(\zeta)
%$$
%is continuous for $ty\in \GmTeichC$.
%The distortion property \eqref{eq:distortion_property_extension}
%implies that the function \eqref{eq:extremal_length_continuity}
%is continuous.
%\qed
%\end{proof}

\subsection{Extremal length is intrinsic}
\label{subsec:extremal_length_is_intrinsic}
%From (2) of Lemma \ref{lem:measured_foliation_2},
%the extremal length given in \eqref{eq:extremal_length_cone}
%is an extended notion of the usual extremal length for measured foliations.
%Indeed,
The extremal length \eqref{eq:extremal_length_cone}
is \emph{intrinsic} in the following sense.

\begin{theorem}[Extremal length is intrinsic]
\label{thm:intrinsic_extremal_length}
For $y\in \TeichbX$,
there is a continuous function
$$
\ext_y:\GmInv\to \mathbb{R}_{+}
$$
such that
\begin{itemize}
\item[{\rm (1)}]
$\extB^{x}_y(\zeta)=\ext_y\circ \tilde{\Psi}_{x}(\zeta)$
for $\zeta\in \GmInvC$ and $x\in \TeichbX$,
and
\item[{\rm (2)}]
For $F\in \mf\subset \GmInv$,
%\subset \mathbb{R}_+^{\mathcal{S}}$,
the value $\ext_y(F)$ is equal to 
the original extremal length of $F$.
%which is defined by the extension
%of the extremal length \eqref{eq:extremal_length_original} on $\mathcal{S}$.
\end{itemize}
\end{theorem}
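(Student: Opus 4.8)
The plan is to define $\ext_y$ directly on the cone $\GmInv$ by an intrinsic supremum formula that makes no reference to a basepoint, and then to verify that pulling it back through each model map $\tilde{\Psi}_x$ recovers the basepoint-dependent extremal length $\extB^x_y$. The starting observation is that for $\zeta=tp\in\GmInvC$ (with $t\ge 0$, $p\in\cl{\TeichbX}$) and $\eta\in\mfC$, definition \eqref{eq:intersection_number_pre-definition} gives $i_{x_0}(\zeta,\eta)=t\,\mathcal{E}_p(\tilde{\Psi}_{x_0}(\eta))$, so writing $\mathfrak{a}=\tilde{\Psi}_{x_0}(\zeta)\in\GmInv$ and $F=\tilde{\Psi}_{x_0}(\eta)\in\mf$ we get $i_{x_0}(\zeta,\eta)=t\,\mathcal{E}_p(F)$. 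Since $\tilde{\Psi}_{x_0}$ restricts to a bijection $\mfC-\{0\}\to\mf-\{0\}$ by \eqref{eq:psi_mfC_to_MF}, the definition \eqref{eq:extremal_length_cone} rewrites (for the subscript $y$) as
$$
\extB^{x_0}_y(\zeta)=\sup_{F\in\mf-\{0\}}\frac{\bigl(t\,\mathcal{E}_p(F)\bigr)^2}{\ext_y(F)}.
$$
Everything thus reduces to understanding the function $F\mapsto t\,\mathcal{E}_p(F)$ attached to $\mathfrak{a}=tp$.

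The heart of the matter, and the main obstacle, is to show that this function is determined by $\mathfrak{a}$ alone, independently of the basepoint $x_0$ and of the decomposition $\mathfrak{a}=tp$; this is precisely what forces property (1) to hold for all $x$ simultaneously. I would argue as follows. Each $\mathcal{E}_p$ is continuous on $\mf$ and homogeneous of degree one: for $p=z\in\TeichbX$ this is clear from $\mathcal{E}_z(F)=(\ext_z(F)/K_z)^{1/2}$ together with $\ext_z(sF)=s^2\ext_z(F)$, and for $p\in\partialGM{\TeichbX}$ it passes to the uniform-on-compacta limit in (E2). Hence $F\mapsto t\,\mathcal{E}_p(F)$ is a continuous, degree-one homogeneous function on $\mf$ whose restriction to $\mathcal{S}$ equals $\mathfrak{a}$ as an element of $\mathbb{R}_+^{\mathcal{S}}$, since $\mathfrak{a}(\alpha)=t\,\mathcal{E}_p(\alpha)$ by \eqref{eq:homeomorphism_cone}. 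Because the weighted simple closed curves $\mathbb{R}_+\otimes\mathcal{S}$ are dense in $\mf$ and a continuous function on $\mf$ is determined by its values there, $t\,\mathcal{E}_p$ is the \emph{unique} continuous homogeneous extension of $\mathfrak{a}|_{\mathcal{S}}$; in particular it depends only on $\mathfrak{a}$. Denoting this extension by $\hat{\mathfrak{a}}$, I then set
$$
\ext_y(\mathfrak{a}):=\sup_{F\in\mf-\{0\}}\frac{\hat{\mathfrak{a}}(F)^2}{\ext_y(F)},
$$
a well-defined function on $\GmInv$.

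Property (1) is then immediate from the rewriting above: for any basepoint $x$ and any $\zeta$ with $\mathfrak{a}=\tilde{\Psi}_x(\zeta)$ one has $\hat{\mathfrak{a}}=t\,\mathcal{E}_p$, whence $\extB^x_y(\zeta)=\ext_y(\mathfrak{a})=\ext_y\circ\tilde{\Psi}_x(\zeta)$. For property (2), if $F_0\in\mf\subset\GmInv$ then by \eqref{eq:embedded_MF} its restriction to $\mathcal{S}$ is $\alpha\mapsto I(\alpha,F_0)$, whose unique continuous homogeneous extension is $F\mapsto I(F,F_0)$ by continuity of the intersection number on $\mf\times\mf$; thus $\hat{F_0}(F)=I(F,F_0)$ and $\ext_y(F_0)=\sup_{F}I(F,F_0)^2/\ext_y(F)$, which equals the original extremal length by the tautological formula \eqref{eq:tautological_formula}. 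Finally, continuity of $\ext_y$ follows at once by fixing any basepoint $x_0$ and writing $\ext_y=\extB^{x_0}_y\circ\tilde{\Psi}_{x_0}^{-1}$, where $\extB^{x_0}_y$ is continuous on $\GmInvC$ by Lemma \ref{lem:continuity} and $\tilde{\Psi}_{x_0}^{-1}$ is continuous since $\tilde{\Psi}_{x_0}$ is a homeomorphism. Once the uniqueness of the continuous homogeneous extension is in hand, the remaining verifications are routine.
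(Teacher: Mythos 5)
Your proof is correct, but it takes a genuinely different route from the paper's. The paper establishes well-definedness only on the dense subcone $\GmTeich$: assuming $\tilde{\Psi}_{x_1}(tz)=\tilde{\Psi}_{x_2}(sw)$, it invokes injectivity of the Gardiner--Masur embedding to conclude $z=w$, derives the relation $t=s\exp(d_T(x_1,z)-d_T(x_2,z))$, and checks via the explicit formula of Lemma \ref{lem:measured_foliation_2}~(1) that $\extB^{x_1}_y(tz)=\extB^{x_2}_y(sw)$; it then defines $\ext_y=\extB^{x_0}_y\circ\tilde{\Psi}_{x_0}^{-1}$ and gets basepoint independence on all of $\GmInv$ by density of $\GmTeich$ and continuity. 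You instead handle the whole cone at once: the kernel $F\mapsto t\,\mathcal{E}_p(F)$ appearing in \eqref{eq:extremal_length_cone} is continuous and homogeneous of degree one on $\mf$, its restriction to $\mathcal{S}$ is exactly $\mathfrak{a}=\tilde{\Psi}_x(tp)$ viewed in $\mathbb{R}_+^{\mathcal{S}}$, and such an extension is unique by density of $\mathbb{R}_+\otimes\mathcal{S}$ in $\mf$ --- so the kernel, hence the supremum, depends only on $\mathfrak{a}$. Your decomposition thus replaces the paper's density argument in $\GmInv$ (Teichm\"uller points dense in the cone) with a density argument in $\mf$ (weighted curves dense in measured foliations), and it never needs the injectivity of the Gardiner--Masur embedding. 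What your route buys is an explicit intrinsic formula $\ext_y(\mathfrak{a})=\sup_{F\in\mf-\{0\}}\hat{\mathfrak{a}}(F)^2/\ext_y(F)$, which the paper only obtains later, in \eqref{eq:extremal_representation_intrinsic}, as a corollary of Theorem \ref{thm:pairing_intrinsic}; what the paper's route buys is that it stays entirely within previously proven lemmas and avoids any discussion of homogeneity and uniqueness of extensions. Your treatment of property (2) --- identifying $\hat{F_0}=I(\cdot,F_0)$ and citing \eqref{eq:tautological_formula} --- is also a direct unwinding of what the paper delegates to Lemma \ref{lem:measured_foliation_2}~(2), so the substance there coincides.
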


\begin{remark}
From the property {\rm (2)} in
Theorem \ref{thm:intrinsic_extremal_length},
the extremal length obtained in Theorem \ref{thm:intrinsic_extremal_length}
is a continuous extension of the original extremal length on $\mf$.
Thus,
the author believes that no confusion occurs when we use the same symbol to denote the extension of the extremal length in Theorem \ref{thm:intrinsic_extremal_length}.
\end{remark}

\begin{proof}[Proof of Theorem \ref{thm:intrinsic_extremal_length}]
We only check the existence and the property (1)
because the property (2) follows from Lemma \ref{lem:measured_foliation_2}.

%Let $x_1,x_2\in \TeichbX$.
Let $t,s>0$ and $x_1,x_2,z,w\in \TeichbX$.
Suppose that
$\tilde{\Psi}_{x_1}(tz)=\tilde{\Psi}_{x_2}(sw)$.
Then,
%This means that
$$
te^{-d_T(x_1,z)}\ext_z(\alpha)^{1/2}=se^{-d_T(x_2,w)}\ext_w(\alpha)^{1/2}
$$
for all $\alpha\in \mathcal{S}$.
From the injectivity of the Gardiner-Masur embedding \eqref{eq:GM-embedding}
we have $z=w$
(cf. Lemma 6.1 in \cite{GM}).
Hence
\begin{equation}
t=s\exp(d_T(x_1,z)-d_T(x_2,z)).
\end{equation}
By Lemma \ref{lem:measured_foliation_2},
we obtain
\begin{align*}
\extB^{x_1}_y(tz)
&=t^2\exp(-2d_T(x_1,z)+2d_T(y,z))\\
&=s^2\exp(2d_T(x_1,z)-2d_T(x_2,z))\cdot \exp(-2d_T(x_1,z)+2d_T(y,z)) \\
&=s^2\exp(-2d_T(x_2,z)+2d_T(y,z)) \\
&=\extB^{x_2}_y(sz)=\extB^{x_2}_y(sw).
\end{align*}
Therefore,
there is a function $\ext_y:\GmTeich\to \mathbb{R}$
such that
\begin{equation}
\label{eq:intrinsic_extremal_on_Teich}
\ext_y(\mathfrak{a})=
\extB^{x_0}_y\circ (\tilde{\Psi}_{x_0})^{-1}(\mathfrak{a}).
\end{equation}
for all $\mathfrak{a}\in \GmTeich$.
From the continuity of $\extB^{x_0}_y$
on $\GmInvC$ and $\tilde{\Psi}_{x_0}^{-1}$
on $\GmInv$,
the function
$\ext_y$ in 
\eqref{eq:intrinsic_extremal_on_Teich}
extends to whole $\GmInv$,
and
\eqref{eq:intrinsic_extremal_on_Teich} holds for all $\mathfrak{a}\in \GmInv$.
%Indeed,
%we let
%$$
%\ext_y(\mathfrak{a})=
%\sup_{F\in \mf-\{0\}}
%\frac{i(\mathfrak{a},F)}{\ext_y(F)}
%$$
%for $\mathfrak{a}\in \GmInv$.
%
%Let $x_1,x_2\in \TeichbX$ and $tz\in \GmTeichC$ with $t\ge 0$ and $z\in \TeichbX$.
%Take $s>0$ with $\Psi_{x_1}(tz)=s\Psi_{x_2}(tz)$.
%Then
%$$
%e^{-d_T(x_1,z)}\ext_z(\alpha)^{1/2}=se^{-d_T(x_2,z)}\ext_z(\alpha)^{1/2}
%$$
%for all $\alpha\in \mathcal{S}$ and $s=\exp(d_T(x_2,z)-d_T(x_1,z))$.
%Also notice that
%
%By the definition of the extremal length,
%\eqref{eq:intersection_number_pre-definition} and 
%Theorem \ref{thm:pairing_intrinsic},
%for $\zeta=tz\in \GmTeichC$ with $t\ge 0$ and $y,z\in \TeichbX$,
%we have
%\begin{align*}
%\extB^{x_1}_y(tz)
%&=\sup_{F\in \mf-\{0\}}
%\frac{i_{x_1}(tz,F)}{\ext_y(F)} \\
%&
%=\sup_{F\in \mf-\{0\}}
%\frac{i
%\left({\Psi}_{x_1}(ty),{\Psi}_{x_0}\left(\ext_{x_1}(F)\cdot [F]\right)
%\right)}{\ext_y(F)} \\
%&=\sup_{F\in \mf-\{0\}}
%\frac{i
%\left({\Psi}_{x_0}(ty),{\Psi}_{x_0}\left(\ext_{x_0}(F)\cdot [F]\right)
%\right)}{\ext_y(F)} \\
%&=\sup_{F\in \mf-\{0\}}
%\frac{t\mathcal{E}_y(F)}{\ext_y(F)}\\
%&=\ext_y\circ {\Psi}_{x_0}(\zeta).
%\end{align*}
\qed
\end{proof}

\section{Topology of the model}
%$\GmInvC$}
\label{sec:topology_of_models}
%In this section,
%we discuss the topology of $\GmInvC$ via the extremal length.
%Notice that $\cl{\TeichbX}$
%is metrizable (cf. \cite{Mi1}).
%Since $\TeichbX$ is separable,
Notice that $\cl{\TeichbX}$ is separable and metrizable
(cf. \cite{Mi1}).
%Since $\cl{\TeichbX}$ and $\mathbb{R}_+$ are locally compact,
Hence,
%from the identification \eqref{eq:homeomorphism_cone},
$\GmInvC$ and $\GmInv$ are locally compact,
separable and metrizable.

\subsection{Bounded sets are precompact}
We shall begin with the following proposition.

\begin{proposition}[Boundedness implies compactness]
\label{prop:bdd_implies_compact}
For any $R>0$,
$$
\GmInvC(R)=\{\zeta\in \GmInvC\mid \extB^{x_0}_{x_0}(\zeta)\le R\}
$$
is a compact set in $\GmInvC$.
Furthermore,
the level set
$$
\{\zeta\in \GmInvC\mid \extB^{x_0}_{x_0}(\zeta)=1\}
$$
coincides with $\cl{\TeichbX}\times \{1\}$.
In particular $\extB^{x_0}_{x_0}(\zeta)=1$ for $\zeta\in \pmfC$.
\end{proposition}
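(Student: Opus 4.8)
The plan is to reduce the entire proposition to the single identity
$$
\extB^{x_0}_{x_0}(tp)=t^2
\qquad (\zeta=tp\in \GmInvC,\ t\ge 0,\ p\in \cl{\TeichbX}),
$$
after which all three assertions follow at once. To obtain this identity I would evaluate the defining formula \eqref{eq:extremal_length_cone} at $y=x_0$ with unit coefficient in the subscript. For $\eta\in \pmfC$, Lemma \ref{lem:image_of_PMF} gives $\tilde{\Psi}_{x_0}(\eta)\in \mf_1$, so $\ext_{x_0}(\tilde{\Psi}_{x_0}(\eta))=1$ and the denominator in \eqref{eq:extremal_length_cone} disappears. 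Using the definition \eqref{eq:intersection_number_pre-definition}, namely $i_{x_0}(tp,\eta)=t\,\mathcal{E}_p(\tilde{\Psi}_{x_0}(\eta))$, I would then write
$$
\extB^{x_0}_{x_0}(tp)=\max_{\eta\in \pmfC}i_{x_0}(tp,\eta)^2
=t^2\max_{\eta\in \pmfC}\mathcal{E}_p(\tilde{\Psi}_{x_0}(\eta))^2.
$$
Since $\tilde{\Psi}_{x_0}$ carries $\pmfC$ bijectively onto $\mf_1$ (by \eqref{eq:psi_mfC_to_MF} together with Lemma \ref{lem:image_of_PMF}) and $\mathcal{E}_p\ge 0$, the final maximum equals $\bigl(\max_{F\in \mf_1}\mathcal{E}_p(F)\bigr)^2$, which is $1$ by the normalization of Convention \ref{convention:2} (equation \eqref{eq:maximal_E_p}). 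Hence $\extB^{x_0}_{x_0}(tp)=t^2$.

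With the identity in hand, I would finish as follows. For compactness, observe that $\GmInvC(R)=\{tp\mid 0\le t\le \sqrt{R},\ p\in \cl{\TeichbX}\}$ is precisely the image of $\cl{\TeichbX}\times[0,\sqrt{R}]$ under the continuous quotient map defining $\GmInvC$; as $\cl{\TeichbX}$ is compact, this product is compact and therefore so is $\GmInvC(R)$. For the level set, the relation $\extB^{x_0}_{x_0}(tp)=t^2=1$ forces $t=1$, so the level set equals $\{1\cdot p\mid p\in \cl{\TeichbX}\}=\cl{\TeichbX}\times\{1\}$. Finally, since $\pmfC=\pmf\times\{1\}\subset\cl{\TeichbX}\times\{1\}$ (using $\pmf\subset \partialGM{\TeichbX}$), every $\zeta\in \pmfC$ has radial coordinate $t=1$, whence $\extB^{x_0}_{x_0}(\zeta)=1$.

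There is no deep obstacle here; the entire content is the bookkeeping that collapses the basepoint extremal length to the square of the radial coordinate. The two points demanding genuine care are: first, that $\tilde{\Psi}_{x_0}$ maps $\pmfC$ \emph{onto all of} $\mf_1$, so that the maximum taken over $\pmfC$ genuinely realizes the normalizing maximum $\max_{F\in \mf_1}\mathcal{E}_p(F)=1$ rather than some smaller quantity; and second, that this normalization holds \emph{uniformly for every} $p\in \cl{\TeichbX}$, boundary points included, which is exactly what Convention \ref{convention:2} secures via Proposition \ref{prop:continuity_E_p}. I would also flag at the outset that the radial coordinate $t$ is well defined for $\zeta\ne 0$ in the model $\GmInvC$, which is what makes the clean formula $\extB^{x_0}_{x_0}(tp)=t^2$ meaningful.
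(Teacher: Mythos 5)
Your proof is correct, and it follows a genuinely different route from the paper's. The paper works inside the function space: from the Minsky-type inequality \eqref{eq:minsky-inequality_mfC} it bounds each coordinate, $i_{x_0}(\zeta,\tilde{\Psi}_{x_0}^{-1}(\alpha))\le R^{1/2}\ext_{x_0}(\alpha)^{1/2}$ for $\alpha\in\mathcal{S}$, invokes Tikhonov's theorem to place $\tilde{\Psi}_{x_0}(\GmInvC(R))$ inside a compact product of intervals in $\mathbb{R}_+^{\mathcal{S}}$, and uses the continuity of $\extB^{x_0}_{x_0}$ (Lemma \ref{lem:continuity}) to get closedness, hence compactness; the level-set claim is then deduced from Lemma \ref{lem:measured_foliation_2}, i.e.\ from the formula $\extB^{x_0}_{x_0}(sz)=s^2$ on $\GmTeichC$, extended to the boundary by density and continuity. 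You bypass all of this by establishing the pointwise identity $\extB^{x_0}_{x_0}(tp)=t^2$ for \emph{every} $p\in\cl{\TeichbX}$ at once, using that $\tilde{\Psi}_{x_0}$ maps $\pmfC$ onto $\mf_1$ and that $\max_{F\in\mf_1}\mathcal{E}_p(F)=1$; compactness is then free, since $\GmInvC(R)$ becomes the continuous image of the compact set $\cl{\TeichbX}\times[0,\sqrt{R}]$ under the quotient map, with no appeal to Tikhonov or to Lemma \ref{lem:continuity}, and the remaining two assertions are immediate. Your route is more economical and yields a stronger intermediate statement (the explicit value of $\extB^{x_0}_{x_0}$ on the whole cone, which the paper obtains only implicitly through density and continuity), while the paper's compactness argument has the feature that it needs only the upper bound coming from \eqref{eq:minsky-inequality_mfC}, not the exact normalization. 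One attribution in your write-up should be repaired: Convention \ref{convention:2} normalizes $\mathcal{E}_p$ only for $p\in\partialGM{\TeichbX}$, so for interior points $y\in\TeichbX$ the equality $\max_{F\in\mf_1}\mathcal{E}_y(F)=1$ is not part of the convention but a consequence of Kerckhoff's formula \eqref{eq:kerckhoff_formula_sharp}, as observed at the start of the proof of Proposition \ref{prop:continuity_E_p}; once you cite that observation, your key identity is fully justified on all of $\cl{\TeichbX}$.
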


\begin{proof}
From the definition \eqref{eq:extremal_length_cone},
%$$
%\extB^{x_0}_{x_0}(\zeta)
%=\sup_{\eta\in \mfC-\{0\}}
%\frac{i_{x_0}(\zeta,\eta)^2}{\ext_{x_0}(\tilde{\Psi}_{x_0}(\eta))}
%=\sup_{\alpha\in \mathcal{S}}
%\frac{i_{x_0}\left(
%\zeta,\tilde{\Psi}_{x_0}^{-1}(\alpha)
%\right)^2}{\ext_{x_0}(\alpha)}.
%$$
%Hence,
the condition $\extB^{x_0}_{x_0}(\zeta)\le R$
implies that
$$
i_{x_0}(\zeta,\tilde{\Psi}_{x_0}^{-1}(\alpha))\le R^{1/2}\ext_{x_0}(\alpha)^{1/2}
$$
for all $\alpha\in \mathcal{S}$.
By Tikhonov's theorem,
the product of closed intervals
$$
\prod_{\alpha\in \mathcal{S}}[0,R^{1/2}\ext_{x_0}(\alpha)^{1/2}]
$$
is a compact set in $\mathbb{R}_+^{\mathcal{S}}$.
From \eqref{eq:identification_GmInv},
the image of $\GmInvC(R)$ by $\tilde{\Psi}_{x_0}$
is contained in the above product.
%(see the discussion at the beginning of \S\ref{sec:extremal_length_GmInv}).
Thus,
by Lemma \ref{lem:continuity},
$\GmInvC(R)$ is closed and hence compact.
The second claim immediately follows from the first and
Lemma \ref{lem:measured_foliation_2}.
%We next discuss the second claim.
%From (1) of Lemma \ref{lem:measured_foliation_2},
%$$
%\extB^{x_0}_{x_0}(y)=\exp
%(-2d_T(x_0,y)+2d_T(x_0,y))=1.
%$$
%for $y\in \TeichbX$.
%From the continuity of extremal length,
%the image of $\cl{\TeichbX}$ is contained in the level set.
%
%Let $\zeta\in \GmInvC$ with $\extB^{x_0}_{x_0}(\zeta)=1$.
%By Lemma \ref{lem:extremal_nontrivial},
%$\zeta\ne 0$.
%Take $p\in \cl{\TeichbX}$ and $t>0$
%such that $\zeta=tp$.
%Notice that $i_{x_0}(\zeta,\eta)=t\cdot i_{x_0}(p,\eta)$
%holds for all $\eta\in \mfC$.
%
%Since 
%$$
%1=\extB^{x_0}_{x_0}(\zeta)=\extB^{x_0}_{x_0}(tp)=t^2\extB^{x_0}_{x_0}(p)=t^2,
%$$
%we
%have $i_{x_0}(\zeta,\eta)=i_{x_0}(p,\eta)$
%for all $\eta\in \mfC$ and hence $\zeta=p$.
%This means that $\zeta\in \cl{\TeichbX}$.
\qed
\end{proof}

\subsection{A system of neighborhoods}
\label{subsec:system_neighborhood}
Let $(\zeta,\xi)\in \GmInvC\times  \mfC$ with $\zeta,\xi\ne 0$ and $\delta>0$.
We define
\begin{align*}
U_\delta(\zeta\colon\xi)
&=\{\eta\in \GmInvC\mid
|i_{x_0}(\eta,\xi)-i_{x_0}(\zeta,\xi)|<
\extB^{x_0}_{x_0}(\zeta)^{1/2}\extB^{x_0}_{x_0}(\xi)^{1/2}\delta\} \\
U_\delta(0\colon\xi)
&=\{\eta\in \GmInvC\mid
i_{x_0}(\eta,\xi)<\extB^{x_0}_{x_0}(\xi)^{1/2}\delta\}.
\end{align*}
Notice that
\begin{equation}
\label{eq:neighborhood-homogeneity}
U_\delta(\zeta\colon t\xi)=U_\delta(\zeta\colon\xi)
\end{equation}
for $t>0$ and $(\zeta,\xi)\in \GmInvC\times  \mfC$ with $\xi\ne 0$.
We set
$$
U_\delta
(\zeta)=\cap_{\xi\in \mfC-\{0\}}U_\delta(\zeta:\xi).
$$
We start with the following lemma.

\begin{proposition}
\label{prop:relaton_neighborhood}	
Let $\delta>0$ and $\zeta\in \GmInvC$.
Then
$$
(1-\delta)\extB^{x_0}_{x_0}(\zeta)^{1/2}
<
\extB^{x_0}_{x_0}(\eta)^{1/2}
<
(1+\delta)\extB^{x_0}_{x_0}(\zeta)^{1/2}
$$
for $\eta\in U_\delta(\zeta)$.
\end{proposition}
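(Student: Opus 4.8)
The plan is to reduce the whole statement to the clean identity
\[
\extB^{x_0}_{x_0}(\mathfrak{a})^{1/2}=\max_{\xi\in \pmfC}i_{x_0}(\mathfrak{a},\xi)
\qquad(\mathfrak{a}\in \GmInvC),
\]
and then to exploit that this maximum is \emph{genuinely attained}. To establish the identity I would start from the defining expression \eqref{eq:extremal_length_cone} for $\extB^{x_0}_{x_0}$, where the $\max$ already ranges over $\pmfC$. For $\xi\in\pmfC$ we have $\tilde{\Psi}_{x_0}(\xi)\in\mf_1$ by Lemma \ref{lem:image_of_PMF}, so the denominator $\ext_{x_0}(\tilde{\Psi}_{x_0}(\xi))$ equals $1$; this collapses the ratio to $i_{x_0}(\mathfrak{a},\xi)^2$ and yields the formula. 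Since $\pmfC\cong\pmf$ is compact and $i_{x_0}(\mathfrak{a},\cdot)$ is continuous, the maximum is attained at some point of $\pmfC$. Moreover, by the homogeneity \eqref{eq:neighborhood-homogeneity} of the neighborhoods, $\eta\in U_\delta(\zeta)$ forces $\eta\in U_\delta(\zeta\colon\xi)$ for every $\xi\in\pmfC$ in particular; on $\pmfC$ the normalization $\extB^{x_0}_{x_0}(\xi)=1$ (Proposition \ref{prop:bdd_implies_compact}) simplifies the defining inequality of $U_\delta(\zeta\colon\xi)$ to $|i_{x_0}(\eta,\xi)-i_{x_0}(\zeta,\xi)|<\extB^{x_0}_{x_0}(\zeta)^{1/2}\delta$.

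For the upper bound, fix $\eta\in U_\delta(\zeta)$ and choose $\xi^{\ast}\in\pmfC$ realizing $\extB^{x_0}_{x_0}(\eta)^{1/2}=i_{x_0}(\eta,\xi^{\ast})$. The neighborhood inequality at $\xi^{\ast}$ gives
\[
i_{x_0}(\eta,\xi^{\ast})<i_{x_0}(\zeta,\xi^{\ast})+\extB^{x_0}_{x_0}(\zeta)^{1/2}\delta .
\]
The first term on the right is bounded by $\extB^{x_0}_{x_0}(\zeta)^{1/2}$ via Minsky's inequality \eqref{eq:minsky-inequality_mfC} at the basepoint, $i_{x_0}(\zeta,\xi^{\ast})^2\le \extB^{x_0}_{x_0}(\zeta)\,\ext_{x_0}(\tilde{\Psi}_{x_0}(\xi^{\ast}))=\extB^{x_0}_{x_0}(\zeta)$. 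Substituting yields $\extB^{x_0}_{x_0}(\eta)^{1/2}<(1+\delta)\extB^{x_0}_{x_0}(\zeta)^{1/2}$, with strictness inherited from the strict inequality defining $U_\delta(\zeta\colon\xi^{\ast})$ precisely because the maximum for $\eta$ is attained.

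For the lower bound I would instead test $\eta$ against the maximizer for $\zeta$: choose $\xi^{\ast\ast}\in\pmfC$ with $i_{x_0}(\zeta,\xi^{\ast\ast})=\extB^{x_0}_{x_0}(\zeta)^{1/2}$. The neighborhood inequality at $\xi^{\ast\ast}$ gives
\[
i_{x_0}(\eta,\xi^{\ast\ast})>i_{x_0}(\zeta,\xi^{\ast\ast})-\extB^{x_0}_{x_0}(\zeta)^{1/2}\delta=(1-\delta)\extB^{x_0}_{x_0}(\zeta)^{1/2},
\]
and since $\extB^{x_0}_{x_0}(\eta)^{1/2}=\max_{\xi\in\pmfC}i_{x_0}(\eta,\xi)\ge i_{x_0}(\eta,\xi^{\ast\ast})$, the lower bound follows. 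The only delicate points in the argument are bookkeeping around strictness and the normalization $\extB^{x_0}_{x_0}(\xi)=1$ on $\pmfC$: compactness of $\pmfC$ guarantees that \emph{both} maxima are attained, which is exactly what transports the strict inequalities of $U_\delta(\zeta\colon\xi)$ to the strict bounds on $\extB^{x_0}_{x_0}(\eta)^{1/2}$, and Minsky's inequality is applied in the right direction only because the test foliations lie in $\pmfC$ and hence have unit extremal length. Throughout one assumes $\zeta\neq 0$, as required for $U_\delta(\zeta\colon\xi)$ to be defined.
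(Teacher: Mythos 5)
Your proof is correct and follows essentially the same route as the paper: both arguments test the defining inequality of $U_\delta(\zeta)$ against the two maximizers in $\pmfC$ (one attaining $\extB^{x_0}_{x_0}(\zeta)^{1/2}=i_{x_0}(\zeta,\xi)$ for the lower bound, one attaining $\extB^{x_0}_{x_0}(\eta)^{1/2}=i_{x_0}(\eta,\xi)$ for the upper bound), using the normalization $\extB^{x_0}_{x_0}(\xi)=1$ on $\pmfC$ and Minsky's inequality \eqref{eq:minsky-inequality_mfC} in exactly the same way. Your explicit packaging of the definition as $\extB^{x_0}_{x_0}(\mathfrak{a})^{1/2}=\max_{\xi\in\pmfC}i_{x_0}(\mathfrak{a},\xi)$ and your remark on the implicit assumption $\zeta\neq 0$ are just tidier bookkeeping of the same argument.
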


\begin{proof}
From \eqref{eq:equality_definition_extremal_length_mfC}
%,
%(2) of Lemma \ref{lem:measured_foliation_2}
and
Proposition \ref{prop:bdd_implies_compact},
we can find $\xi\in \pmfC$
such that
$$
i_{x_0}(\zeta,\xi)^2=
\extB^{x_0}_{x_0}(\zeta)\cdot \extB^{x_0}_{x_0}(\xi)
=\extB^{x_0}_{x_0}(\zeta).
$$
By \eqref{eq:minsky-inequality_mfC},
for $\eta\in U_\delta(\zeta)$,
we have
$$
\extB^{x_0}_{x_0}(\zeta)^{1/2}
=i_{x_0}(\zeta,\xi)
<
i_{x_0}(\eta,\xi)+
\extB^{x_0}_{x_0}(\zeta)^{1/2}\delta
\le
\extB^{x_0}_{x_0}(\eta)^{1/2}+
\extB^{x_0}_{x_0}(\zeta)^{1/2}\delta
$$
and hence 
$$
(1-\delta)\extB^{x_0}_{x_0}(\zeta)^{1/2}\le \extB^{x_0}_{x_0}(\eta)^{1/2}.
$$
Similarly,
we take $\xi\in \pmfC$
with 
$i(\eta,\xi)^2=
\extB^{x_0}_{x_0}(\eta)\extB^{x_0}_{x_0}(\xi)=\extB^{x_0}_{x_0}(\eta)$.
This means that
$$
\extB^{x_0}_{x_0}(\eta)^{1/2}
=i_{x_0}(\eta,\xi)
<
i_{x_0}(\zeta,\xi)+
\extB^{x_0}_{x_0}(\zeta)^{1/2}\delta
\le
\extB^{x_0}_{x_0}(\zeta)^{1/2}+
\extB^{x_0}_{x_0}(\zeta)^{1/2}\delta,
$$
and we are done.
\qed
\end{proof}

We claim the following (compare Lemma 4.1 of \cite{Mi1}. See also \cite{Ker}).

\begin{lemma}
Let $\zeta\in \GmInvC$.
For any $\delta>0$,
$U_\delta(\zeta)$ is an open neighborhood of $\zeta$
with compact closure.
Furthermore,
we have that $\cap_{\delta>0}U_\delta(\zeta)=\{\zeta\}$.
\end{lemma}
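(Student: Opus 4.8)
The plan is to establish the three assertions in turn, after first reducing the defining intersection to the compact slice $\pmfC$. By the homogeneity \eqref{eq:neighborhood-homogeneity}, every $\xi\in\mfC-\{0\}$ satisfies $U_\delta(\zeta\colon\xi)=U_\delta(\zeta\colon\xi_0)$ for the representative $\xi_0\in\pmfC$ of its projective class, so that $U_\delta(\zeta)=\cap_{\xi\in\pmfC}U_\delta(\zeta\colon\xi)$; moreover $\extB^{x_0}_{x_0}(\xi)=1$ on $\pmfC$ by Proposition \ref{prop:bdd_implies_compact}, which simplifies the inequality defining each $U_\delta(\zeta\colon\xi)$. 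That $\zeta\in U_\delta(\zeta)$ is immediate once we note $\extB^{x_0}_{x_0}(\zeta)>0$ for $\zeta\neq 0$ by the non-triviality Lemma \ref{lem:extremal_nontrivial} (the case $\zeta=0$ being read off directly from the definition of $U_\delta(0\colon\xi)$).

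The main obstacle is \emph{openness}, since $U_\delta(\zeta)$ is an intersection of uncountably many open conditions and is therefore not open for formal reasons. Here I would instead argue that the complement is closed. Since $\GmInvC$ is metrizable (\S\ref{sec:topology_of_models}), it suffices to show the complement is sequentially closed: if $\eta_n\to\eta$ with each $\eta_n\notin U_\delta(\zeta)$, choose $\xi_n\in\pmfC$ witnessing the failure, so that $|i_{x_0}(\eta_n,\xi_n)-i_{x_0}(\zeta,\xi_n)|\ge \extB^{x_0}_{x_0}(\zeta)^{1/2}\delta$. As $\pmfC$ is compact we may pass to a subsequence with $\xi_n\to\xi\in\pmfC$, and the joint continuity of $i_{x_0}$ on $\GmInvC\times\mfC$ passes the inequality to the limit, yielding $\eta\notin U_\delta(\zeta)$. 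This is precisely where the reduction to the compact $\pmfC$ is indispensable, as the sequential compactness argument on the test foliations $\xi_n$ would fail over all of $\mfC-\{0\}$.

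For the compact closure I would invoke Proposition \ref{prop:relaton_neighborhood}: for $\zeta\neq 0$, every $\eta\in U_\delta(\zeta)$ satisfies $\extB^{x_0}_{x_0}(\eta)<(1+\delta)^2\,\extB^{x_0}_{x_0}(\zeta)=:R$ (the case $\zeta=0$ giving $\extB^{x_0}_{x_0}(\eta)<\delta^2$ directly from \eqref{eq:equality_definition_extremal_length_mfC} together with $\extB^{x_0}_{x_0}(\xi)=1$ on $\pmfC$). Hence $U_\delta(\zeta)\subset\GmInvC(R)$, which is compact by Proposition \ref{prop:bdd_implies_compact}; being compact in a metrizable space it is closed, so $\overline{U_\delta(\zeta)}\subset\GmInvC(R)$ is a closed subset of a compact set, hence compact.

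Finally, for $\cap_{\delta>0}U_\delta(\zeta)=\{\zeta\}$ the inclusion $\supseteq$ is clear from $\zeta\in U_\delta(\zeta)$. Conversely, if $\eta\in U_\delta(\zeta)$ for every $\delta>0$, then for $\zeta\neq 0$ letting $\delta\to 0$ forces $i_{x_0}(\eta,\xi)=i_{x_0}(\zeta,\xi)$ for all $\xi\in\pmfC$, and hence for all $\xi\in\mfC$ by homogeneity (the case $\zeta=0$ similarly forcing $i_{x_0}(\eta,\xi)=0$). Specializing $\xi=\tilde{\Psi}_{x_0}^{-1}(\alpha)$ for $\alpha\in\mathcal{S}$ and using the coordinate identification \eqref{eq:identification_GmInv} yields $\tilde{\Psi}_{x_0}(\eta)=\tilde{\Psi}_{x_0}(\zeta)$ in $\mathbb{R}_+^{\mathcal{S}}$; since $\tilde{\Psi}_{x_0}$ is a homeomorphism, this gives $\eta=\zeta$, completing the proof.
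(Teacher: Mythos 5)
Your proposal is correct and takes essentially the same route as the paper's own proof: openness by showing the complement is sequentially closed using witnesses normalized into the compact slice $\pmfC$ together with the joint continuity of $i_{x_0}$, compactness of the closure via Propositions \ref{prop:relaton_neighborhood} and \ref{prop:bdd_implies_compact}, and the identity $\cap_{\delta>0}U_\delta(\zeta)=\{\zeta\}$ by letting $\delta\to 0$ and invoking the injectivity of $\tilde{\Psi}_{x_0}$ through \eqref{eq:identification_GmInv}. Your write-up is in fact slightly more careful than the paper's (explicit handling of the case $\zeta=0$ in the compactness step, and an explicit final identification argument), but these are refinements of the same proof, not a different approach.
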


\begin{proof}
It is clear that $\zeta\in U_\delta(\zeta)$ for all $\delta>0$.
%To check that $U_\delta(\zeta)$ is open,
%%we shall show that $\GmInvC\setminus U_\delta(\zeta)$ is closed.
%%Indeed,
%we first check that for any $\zeta'\in U_\delta(\zeta)$
%there is $\delta'>0$ with $U_{\delta'}(\zeta')\subset U_\delta(\zeta)$.
%Actually,
%by definition,
%for any $\xi\in \mfC-\{0\}$
%there is an $r(\xi)<1$ such that
%$$
%|i_{x_0}(\zeta',\xi)-i_{x_0}(\zeta,\xi)|=
%\extB^{x_0}_{x_0}(\zeta)^{1/2}\extB^{x_0}_{x_0}(\xi)^{1/2}
%(r(\xi)\delta).
%$$
%Since $i_{x_0}$ and $\extB^{x_0}_{x_0}$ are continuous,
%%on $\GmInvC\times \mfC$,
%there is an $r_{0}<1$ such that $r(\xi)\le r_{0}$ for all $\xi\in \mfC-\{0\}$
%(cf. \eqref{eq:neighborhood-homogeneity}).
%Hence,
%if we take $\delta'>0$ such that
%$(1+\delta)\delta'<1-r_{0}$,
%from Proposition \ref{prop:relaton_neighborhood},
%we have
%\begin{align*}
%|i_{x_0}(\eta,\xi)-i_{x_0}(\zeta,\xi)|
%&\le 
%|i_{x_0}(\eta,\xi)-i_{x_0}(\zeta',\xi)|
%+
%|i_{x_0}(\zeta',\xi)-i_{x_0}(\zeta,\xi)| \\
%&<
%\extB^{x_0}_{x_0}(\zeta')^{1/2}\extB^{x_0}_{x_0}(\xi)^{1/2}\delta'
%+
%\extB^{x_0}_{x_0}(\zeta)^{1/2}\extB^{x_0}_{x_0}(\xi)^{1/2}\delta \\
%&\le
%((1+\delta)\delta'
%+r_{0})
%\extB^{x_0}_{x_0}(\zeta)^{1/2}\extB^{x_0}_{x_0}(\xi)^{1/2}\delta \\
%&<\extB^{x_0}_{x_0}(\zeta)^{1/2}\extB^{x_0}_{x_0}(\xi)^{1/2}\delta
%\end{align*}
%for $\eta\in U_{\delta'}(\zeta')$.
%This means that
%$U_{\delta'}(\zeta')\subset
%U_\delta(\zeta\colon \xi)$ for all $\xi\in \mfC-\{0\}$
%and $U_{\delta'}(\zeta')\subset U_\delta(\zeta)$ as we desired.
%
%Hence,
%it suffices to show that $U_\delta(\zeta)$ contains an open neighborhood of $\zeta$.
Let $\zeta'\in U_{\delta}(\zeta)$.
We suppose on the contrary that 
there is a sequence $\{\zeta_n\}_{n=1}^\infty$ in the complement
$\GmInvC\setminus U_\delta(\zeta)$
which converges to $\zeta'$.
For any $n$,
there is $\xi_n\in \pmfC$ such that
\begin{equation} \label{eq:contradiction_open_U_delta}
|i_{x_0}(\zeta_n,\xi_n)-i_{x_0}(\zeta,\xi_n)|
\ge \extB^{x_0}_{x_0}(\zeta)^{1/2}\ext_{x_0}(\xi_n)^{1/2}\delta
=\extB^{x_0}_{x_0}(\zeta)^{1/2}\delta.
\end{equation}
Since $\pmfC$ is compact,
we may assume that $\xi_n$ converges to $\xi_\infty\in \pmfC$.
Since $\zeta_n\to \zeta'$ as $n\to \infty$,
by Proposition \ref{prop:continuity_E_p}
%$i_{x_0}(\zeta_n,\,\cdot\,)=\mathcal{E}_{\zeta_n}\circ \tilde{\Psi}_{x_0}(\,\cdot\,)$
%converges to
%$i_{x_0}(\zeta_\infty,\,\cdot\,)=\mathcal{E}_{\zeta_\infty}\circ \tilde{\Psi}_{x_0}(\,\cdot\,)$
%uniformly on any compact set of $\mfC$.
%From 
and \eqref{eq:contradiction_open_U_delta},
we have
$$
|i_{x_0}(\zeta',\xi_\infty)-i_{x_0}(\zeta,\xi_\infty)|
\ge \extB^{x_0}_{x_0}(\zeta)^{1/2}\delta,
$$
and we get a contradiction by Lemma \ref{lem:extremal_nontrivial}.
Hence $U_\delta(\zeta)$ is open.
By Lemma \ref{prop:relaton_neighborhood},
$U_\delta(\zeta)$
is contained in $\GmInvC((1+\delta)\extB^{x_0}_{x_0}(\zeta))$.
Therefore,
by Proposition \ref{prop:bdd_implies_compact},
the closure of $U_\delta(\zeta)$ is compact.

To show the remaining claim,
we only treat the case $\zeta\ne 0$.
The other case is dealt with the same manner.
Suppose that $\eta\in U_\delta(\zeta)$ for all $\delta>0$.
By definition,
we have
$$
|i_{x_0}(\eta,\xi)-i_{x_0}(\zeta,\xi)|<\extB^{x_0}_{x_0}(\zeta)^{1/2}\delta
$$
 for all $\xi\in \pmfC$ and $\delta>0$.
 This means that $i_{x_0}(\eta,\xi)=i_{x_0}(\zeta,\xi)$
 for all $\xi\in \pmfC$ and $\eta=\zeta$.
\qed
\end{proof}

%
%
%\begin{corollary}[Metric topology]
%The topology on $\GmInvC$ coincides with the metric topology induced from
%$$
%d_{\mathcal{C}}(\zeta,\eta)=\max\{|i_{x_0}(\zeta,\xi)-
%i_{x_0}(\eta,\xi)|\mid \xi\in \pmfC\}.
%$$
%for $\zeta,\eta\in \GmInvC$.
%\end{corollary}
%
%\begin{proof}
%Indeed,
%Let $\zeta\in \GmInvC$ and $\delta>0$.
%When $\zeta\ne 0$,
%$U_\delta(\zeta)$ is the metric ball of center $\zeta\in \GmInvC$
%with radius $\extB^{x_0}_{x_0}(\zeta)^{1/2}\delta$.
%Otherwise
%$U_\delta(0)$ is the metric ball of radius $\delta$.
%\end{proof}

\section{The Gromov product and Extension of $\mathcal{E}_\zeta$}
\label{sec:Gromov_product_extension}
For $\eta=ty\in \GmTeichC$
and $\zeta\in \GmInvC$,
we define
\begin{equation}
\label{eq:E_y-2}
\mathcal{E}_{\eta}(\zeta)=
\left\{
\frac{\extB^{x_0}_{ty}(\zeta)}{K_y}
\right\}^{1/2}
=
t\cdot \exp(-d_T(x_0,y))\cdot
\extB^{x_0}_{y}(\zeta)^{1/2}.
\end{equation}
After identifying $\mfC$ and $\mf$ via $\tilde{\Psi}_{x_0}$,
by Lemma \ref{lem:measured_foliation_2},
the function $\mathcal{E}_y$ in \eqref{eq:E_y-2}
is recognized as an extension of the function \eqref{eq:E_y} to $\GmInvC$.
By definition,
the function \eqref{eq:E_y-2}
satisfies the \emph{homogeneous property}
\begin{equation}
\label{eq:bilinearity}
\mathcal{E}_{sy}(t\zeta)=st\cdot \left\{
\frac{\extB^{x_0}_{y}(\zeta)}{K_y}
\right\}^{1/2}
=st\cdot \mathcal{E}_y(\zeta).
\end{equation}
for $sy\in \GmTeichC$,
$t\ge 0$ and $\zeta\in \GmInvC$.

%\subsection{The Gromov product for $d_T$}
%\label{subsec:Gromov_product_d_T}
Notice from Lemma \ref{lem:measured_foliation_2}
that
\begin{align}
\label{eq:gromov-product_extension}
\mathcal{E}_{sy}(tz)
&=st\cdot\exp(-d_T(x_0,z)+d_T(y,z)-d_T(x_0,y))\\
&=st\cdot \exp(-2\gromov{y}{z}{x_0}) \nonumber
\end{align}
for $sy,tz\in \GmTeichC$
where $\gromov{y}{z}{x_0}$
is the \emph{Gromov product}
$$
\gromov{y}{z}{x_0}=\frac{1}{2}
(d_T(x_0,z)+d_T(x_0,y)-d_T(y,z))
$$
with basepoint $x_0$.
In particular,
we have the following \emph{symmetry}
\begin{equation}
\label{eq:symmetry_E_p}
\mathcal{E}_{sy}(tz)
=\mathcal{E}_{tz}(sy) \\
\end{equation}
for $sy,tz\in \GmTeichC$.

%\subsection{Equicontinuity}
%\label{subsec:equicontinuity}
The following  was observed for the extremal length function on $\mf$
in \cite{Mi1}.

\begin{proposition}[Equicontinuity]
\label{prop:equicontinuity}
The family $\{\mathcal{E}_y\}_{y\in \TeichbX}$
is an equicontinuous family of continuous functions on $\GmInvC$.
In fact,
for $\delta>0$ and $\zeta\in \GmInvC$,
we have
\begin{equation}
\label{eq:equicontinuity_extension}
|\mathcal{E}_y(\zeta)-\mathcal{E}_y(\eta)|
\le \max\{1,\extB^{x_0}_{x_0}(\zeta)^{1/2}\}\delta
\end{equation}
for all $\eta\in U_\delta(\zeta)$ and $y\in \TeichbX$.
\end{proposition}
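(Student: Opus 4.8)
The plan is to realize each $\mathcal{E}_y$ as a pointwise maximum of homogeneous functions over the compact set $\pmfC$, and then to reduce the estimate \eqref{eq:equicontinuity_extension} to the very inequality that defines the neighborhood $U_\delta(\zeta)$. First I would combine the definition \eqref{eq:E_y-2} with the definition \eqref{eq:extremal_length_cone} of the extremal length (taken with $t=1$). Since every intersection number $i_{x_0}(\zeta,\xi)$ is non-negative and the square root is increasing, taking the square root commutes with the maximum, so for $y\in \TeichbX$ and $\zeta\in \GmInvC$ one has
$$
\mathcal{E}_y(\zeta)=\max_{\xi\in \pmfC}\frac{i_{x_0}(\zeta,\xi)}{c_y(\xi)},
\qquad
c_y(\xi):=\left(K_y\,\ext_y(\tilde{\Psi}_{x_0}(\xi))\right)^{1/2}.
$$
Thus $\mathcal{E}_y$ is a maximum, over $\xi\in \pmfC$, of the functions $\zeta\mapsto i_{x_0}(\zeta,\xi)/c_y(\xi)$, each homogeneous of degree one in $\zeta$.

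The key step, and the one carrying the uniformity in $y$, is a lower bound for $c_y(\xi)$ that is independent of $y$. By Lemma \ref{lem:image_of_PMF} we have $\tilde{\Psi}_{x_0}(\xi)\in \mf_1$ for $\xi\in \pmfC$, so $\ext_{x_0}(\tilde{\Psi}_{x_0}(\xi))=1$. Kerckhoff's formula \eqref{eq:Kerckhoff_formula} yields the distortion inequality $\ext_y(F)\ge K_y^{-1}\ext_{x_0}(F)$ for all $F\in \mf$ (the same distortion underlying \eqref{eq:distortion_property_extension}), whence
$$
c_y(\xi)^2=K_y\,\ext_y(\tilde{\Psi}_{x_0}(\xi))\ge K_y\cdot K_y^{-1}\cdot 1=1.
$$
Hence $c_y(\xi)\ge 1$ for all $\xi\in \pmfC$ and all $y\in \TeichbX$, which is precisely the uniform control needed.

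Combining these, I would use the elementary inequality $|\max_\xi a_\xi-\max_\xi b_\xi|\le \max_\xi|a_\xi-b_\xi|$ and the bound $c_y(\xi)\ge 1$ to obtain, for $\eta\in U_\delta(\zeta)$,
$$
|\mathcal{E}_y(\zeta)-\mathcal{E}_y(\eta)|
\le \max_{\xi\in \pmfC}\frac{|i_{x_0}(\zeta,\xi)-i_{x_0}(\eta,\xi)|}{c_y(\xi)}
\le \max_{\xi\in \pmfC}|i_{x_0}(\zeta,\xi)-i_{x_0}(\eta,\xi)|.
$$
Now I invoke the definition of $U_\delta(\zeta)$ together with $\extB^{x_0}_{x_0}(\xi)=1$ for $\xi\in \pmfC$ (Proposition \ref{prop:bdd_implies_compact}): for $\zeta\ne 0$ the inequality defining $U_\delta(\zeta:\xi)$ bounds the numerator by $\extB^{x_0}_{x_0}(\zeta)^{1/2}\delta$, while for $\zeta=0$ the inequality defining $U_\delta(0:\xi)$ bounds it by $\delta$; the single bound $\max\{1,\extB^{x_0}_{x_0}(\zeta)^{1/2}\}\delta$ covers both cases, giving \eqref{eq:equicontinuity_extension}. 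Continuity of each $\mathcal{E}_y$ then follows for free: since $\{U_\delta(\zeta)\}_{\delta>0}$ is a neighborhood basis at $\zeta$ by the preceding lemma, the same estimate forces $\mathcal{E}_y\to \mathcal{E}_y(\zeta)$ near $\zeta$. I expect the only real subtlety to be the bookkeeping at $\zeta=0$, where the two cases in the definition of $U_\delta$ must be reconciled with the factor $\max\{1,\extB^{x_0}_{x_0}(\zeta)^{1/2}\}$; the analytic heart of the argument is the $y$-independent estimate $c_y(\xi)\ge 1$.
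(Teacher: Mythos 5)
Your proof is correct and is in essence the paper's own argument in cleaner packaging: the paper picks, for each of $\zeta$ and $\eta$, a maximizer $\xi\in \pmfC$ of the quotient defining $\extB^{x_0}_y$ (via the equality case \eqref{eq:equality_definition_extremal_length_mfC}) and runs the two resulting one-sided estimates, which is exactly what your inequality $|\max_\xi a_\xi - \max_\xi b_\xi|\le \max_\xi |a_\xi-b_\xi|$ abstracts. Your key uniform bound $c_y(\xi)\ge 1$ is the same distortion estimate $\extB^{x_0}_y(\xi)\ge K_y^{-1}\extB^{x_0}_{x_0}(\xi)=K_y^{-1}$ that the paper invokes, so the two proofs coincide step for step, including the separate bookkeeping at $\zeta=0$.
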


\begin{proof}
We first assume that $\zeta\ne 0$.
Take $\xi\in \pmfC$ with
$i_{x_0}(\zeta,\xi)=\extB^{x_0}_y(\zeta)^{1/2}\extB^{x_0}_y(\xi)^{1/2}$
(cf. \eqref{eq:equality_definition_extremal_length_mfC}).
If $\eta\in U_\delta(\zeta)$,
\begin{align*}
\extB^{x_0}_y(\zeta)^{1/2}
\extB^{x_0}_y(\xi)^{1/2}
&=i_{x_0}(\zeta,\xi)
\le i_{x_0}(\eta,\xi) +\extB^{x_0}_{x_0}(\zeta)^{1/2}\delta\\
&\le
\extB^{x_0}_y(\eta)^{1/2}\extB^{x_0}_y(\xi)^{1/2}+
\extB^{x_0}_{x_0}(\zeta)^{1/2}\delta.
\end{align*}
Hence we get
\begin{align}
\extB^{x_0}_y(\zeta)^{1/2}
&\le \extB^{x_0}_y(\eta)^{1/2}
+\frac{\extB^{x_0}_{x_0}(\zeta)^{1/2}}{\extB^{x_0}_y(\xi)^{1/2}}\delta \nonumber\\
&\le \extB^{x_0}_y(\eta)^{1/2}
+K_y^{1/2}\extB^{x_0}_{x_0}(\zeta)\delta,
\label{eq:extremal_C_GM_1}
\end{align}
since $\extB^{x_0}_y(\xi)\ge K_y^{-1}\extB^{x_0}_{x_0}(\xi)=K_y^{-1}$
(cf. \eqref{eq:distortion_property_extension}).

We also take $\xi'\in \pmfC$
with $i_{x_0}(\eta,\xi')=\extB^{x_0}_y(\eta)^{1/2}\extB^{x_0}_y(\xi')^{1/2}$.
Then,
\begin{align*}
\extB^{x_0}_y(\eta)^{1/2}\extB^{x_0}_y(\xi')^{1/2}
&=
i_{x_0}(\eta,\xi')
\le
i_{x_0}(\zeta,\xi')+\extB^{x_0}_{x_0}(\zeta)\delta\\
&\le
\extB^{x_0}_y(\zeta)^{1/2}\extB^{x_0}_y(\xi')^{1/2}+
\extB^{x_0}_{x_0}(\zeta)^{1/2}\delta.
\end{align*}
Hence,
by the same argument as above,
\begin{equation}
\label{eq:extremal_C_GM_2}
\extB^{x_0}_y(\eta)^{1/2}
\le
\extB^{x_0}_y(\zeta)^{1/2}+K_y^{1/2}\extB^{x_0}_{x_0}(\zeta)^{1/2}\delta.
\end{equation}
Thus,
\eqref{eq:extremal_C_GM_1} and \eqref{eq:extremal_C_GM_2}
\begin{equation}
\label{eq:extremal_C_GM_3}
|\extB^{x_0}_y(\eta)^{1/2}-\extB^{x_0}_y(\zeta)^{1/2}|\le K_y^{1/2}\extB^{x_0}_{x_0}(\zeta)^{1/2}\delta.
\end{equation}

Suppose next that $\zeta=0$.
If we take $\xi'\in \pmfC$
with $i_{x_0}(\eta,\xi')=\extB^{x_0}_y(\xi')^{1/2}\extB^{x_0}_y(\eta)^{1/2}$,
$$
\extB^{x_0}_y(\eta)^{1/2}\cdot \extB^{x_0}_y(\xi')^{1/2}
=i_{x_0}(\eta,\xi')<\delta.
$$
Therefore,
we conclude
\begin{equation}
\label{eq:extremal_C_GM_4}
|\extB^{x_0}_y(\eta)^{1/2}-\extB^{x_0}_y(\zeta)^{1/2}|=
\extB^{x_0}_{y}(\eta)\le
\frac{\delta}{\extB^{x_0}_y(\xi')^{1/2}}
\le K_y^{1/2}\delta.
\end{equation}
Thus,
\eqref{eq:extremal_C_GM_3} and \eqref{eq:extremal_C_GM_4}
implies \eqref{eq:equicontinuity_extension}.
\qed
\end{proof}

%\subsubsection{The Gromov product for $d_T$}
%Notice from Lemma \ref{lem:measured_foliation_2}
%that
%\begin{align}
%\label{eq:gromov-product_extension}
%\mathcal{E}_{sy}(tz)
%&=st\cdot\exp(-d_T(x_0,z)+d_T(y,z)-d_T(x_0,y))\\
%&=st\cdot \exp(-2\langle y,z\rangle_{x_0}) \nonumber
%\end{align}
%for $sy,tz\in \GmTeichC$
%where $\langle y,z\rangle_{x_0}$
%is the \emph{Gromov product}
%$$
%\langle y,z\rangle_{x_0}=\frac{1}{2}
%(d_T(x_0,z)+d_T(x_0,y)-d_T(y,z))
%$$
%with basepoint $x_0$.
%In particular,
%we have the following \emph{symmetry}
%\begin{equation}
%\label{eq:symmetry_E_p}
%\mathcal{E}_{sy}(tz)
%=\mathcal{E}_{tz}(sy) \\
%\end{equation}
%for $sy,tz\in \GmTeichC$.

\section{Extension of the intersection number}
\label{sec:extension_intersection_number}

One of the purpose of this section is to show the following theorem.
%In the statement of the theorem,
%we think of $\GmInv$ merely as a subset of $\mathbb{R}_+^{\mathcal{S}}$.
%Namely,
%from the definition \eqref{eq:definition_GmInv},
%$$
%\GmInv=\proj^{-1}(\cl{\TeichbX})\cup \{0\}
%$$
%where $\proj:\mathbb{R}_+^{\mathcal{S}}\to {\rm P}\mathbb{R}_+^{\mathcal{S}}$
%is the projection. 

\begin{theorem}[Intersection number on $\GmInv$]
\label{thm:pairing_intrinsic}
There exists a unique continuous function
\begin{equation}
\label{eq:new-intersection-number}
i(\cdot,\cdot):\GmInv\times \GmInv\to \mathbb{R}_+
\end{equation}
independent of the choice of basepoint $x_0$
satisfying the following properties.
\begin{itemize}
\item[{\rm (1)}]
For any $\zeta,\eta\in \GmInvC$,
$$
i(\tilde{\Psi}_{x_0}(\zeta),\tilde{\Psi}_{x_0}(\eta))
=i_{x_0}(\zeta,\eta).
$$
In particular,
we have
\begin{align*}
i\left(\tilde{\Psi}_{x_0}(ty),\tilde{\Psi}_{x_0}(sp)\right)
&=
%ts\,\mathcal{E}_y(p)=
ts\,e^{-d_T(x_0,y)}\ext_y\left(\Psi_{x_0}(p)\right)^{1/2} \\
i\left(\tilde{\Psi}_{x_0}(p),F\right)
&=
i\left(\Psi_{x_0}(p),F\right)=
%ts\,\mathcal{E}_y(p)=
\mathcal{E}_p(F)
\end{align*}
for $x_0,y\in \TeichbX$, $p\in \partialGM{\TeichbX}$,
$F\in \mf$ and $t,s\ge 0$.
\item[{\rm (2)}]
$i(\mathfrak{a},\mathfrak{b})=i(\mathfrak{b},\mathfrak{a})$
for $\mathfrak{a},\mathfrak{b}\in \GmInv$.
\item[{\rm (3)}]
$i(s\mathfrak{a},t\mathfrak{b})=st\cdot i(\mathfrak{a},\mathfrak{b})$
for $s,t\ge 0$ and $\mathfrak{a},\mathfrak{b}\in \GmInv$.
\item[{\rm (4)}]
For $x_0,y,z\in \TeichbX$,
$$
i\left(\Psi_{x_0}(y),\Psi_{x_0}(z)\right)=\exp(-2\gromov{y}{z}{x_0}).
$$
\item[{\rm (5)}]
The self-intersection number satisfies
$$
i(\mathfrak{a},\mathfrak{a})=
\left\{
\begin{array}{cl}
t^2\exp(-2d_T(x_0,y)) &
\mbox{if $\mathfrak{a}=\tilde{\Psi}_{x_0}(ty)\in \GmTeich$} \\
0 & \mbox{if $\mathfrak{a}\in \Gmbdy$}
\end{array}
\right.
$$
for $x_0\in \TeichbX$.
%\item[{\rm (6)}]
%For $x_0,y\in \TeichbX$, $p\in \partialGM{\TeichbX}$ and $t,s\ge 0$,
%$$
%i\left(\tilde{\Psi}_{x_0}(y,t),\tilde{\Psi}_{x_0}(p,s)\right)=
%%ts\,\mathcal{E}_y(p)=
%ts\,e^{-d_T(x_0,y)}\ext_y\left(\Psi_{x_0}(p)\right)^{1/2}.
%$$
\item[{\rm (6)}]
For $F,G\in \mf\subset \GmInv$,
$$
i(F,G)=I(F,G),
$$
where
we recall that the intersection number
in the right-hand side is the original intersection number function on $\mf\times \mf$.
\end{itemize}
%Here,
%$\ext_y(\,\cdot\,)$ in {\rm (1)} and {\rm (6)} above is the (new) extremal length
%which is obtained in Theorem \ref{thm:intrinsic_extremal_length}.
\end{theorem}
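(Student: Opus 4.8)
\emph{Approach.} The plan is to transport the problem to the model cone $\GmInvC$ through the homeomorphism $\tilde\Psi_{x_0}$, extend the data already built there, and then set $i(\mathfrak{a},\mathfrak{b})=i_{x_0}(\tilde\Psi_{x_0}^{-1}(\mathfrak{a}),\tilde\Psi_{x_0}^{-1}(\mathfrak{b}))$. On the dense subcone $\GmTeichC$ I already possess a natural candidate: the function $(\gamma,\beta)\mapsto\mathcal{E}_\gamma(\beta)$ of \eqref{eq:E_y-2}, which is continuous on $\GmTeichC\times\GmInvC$ (Lemma \ref{lem:continuity}), symmetric on $\GmTeichC\times\GmTeichC$ by \eqref{eq:symmetry_E_p}, and homogeneous by \eqref{eq:bilinearity}. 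A direct computation using \eqref{eq:psi_mfC_to_MF} and Lemma \ref{lem:measured_foliation_2} shows $\mathcal{E}_{\gamma}(\beta)=i_{x_0}(\gamma,\beta)$ for $(\gamma,\beta)\in\GmTeichC\times\mfC$, so this candidate is consistent with the intersection number constructed in \S\ref{sec:extremal_length_GmInv}. I therefore put $i_{x_0}(\gamma,\beta):=\mathcal{E}_\gamma(\beta)$ on $\GmTeichC\times\GmTeichC$ and extend it continuously to $\GmInvC\times\GmInvC$.

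\emph{The extension (main step).} The crux is producing this continuous extension, and the difficulty is that the subscript $\gamma$ of $\mathcal{E}_\gamma$ cannot a priori be allowed to reach the Gardiner--Masur boundary, where no point of $\TeichbX$ sits, so $\mathcal{E}_\gamma$ is undefined there. The tool is Proposition \ref{prop:equicontinuity}: together with the homogeneity \eqref{eq:bilinearity} and the identity $\extB^{x_0}_{x_0}(y)=1$ for $y\in\TeichbX$ (Lemma \ref{lem:measured_foliation_2}), it furnishes a modulus of continuity for $\beta\mapsto\mathcal{E}_\gamma(\beta)$ that is uniform as $\gamma$ ranges over a bounded set $\GmInvC(R)$. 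The symmetry \eqref{eq:symmetry_E_p} is exactly what lets me transfer this equicontinuity from the argument to the subscript; hence $(\gamma,\beta)\mapsto\mathcal{E}_\gamma(\beta)$ is \emph{jointly} uniformly continuous on $(\GmTeichC\times\GmTeichC)\cap(\GmInvC(R)\times\GmInvC(R))$. Since $\GmInvC(R)$ is compact (Proposition \ref{prop:bdd_implies_compact}) and $\GmTeichC$ is dense in $\GmInvC$, a uniformly continuous real-valued function on a dense subset of a compact metric space extends uniquely and continuously; letting $R\to\infty$ and invoking uniqueness to patch, I obtain a continuous symmetric $i_{x_0}$ on $\GmInvC\times\GmInvC$. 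Passing \eqref{eq:bilinearity} to the limit gives homogeneity, and Proposition \ref{prop:continuity_E_p} shows that the extension reproduces the boundary formula: for $p\in\cl{\TeichbX}$ one takes $y_n\to p$ with $y_n\in\TeichbX$ and computes $i_{x_0}(tp,\eta)=\lim_n t\,\mathcal{E}_{y_n}(\tilde\Psi_{x_0}(\eta))=t\,\mathcal{E}_p(\tilde\Psi_{x_0}(\eta))$.

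\emph{Verification and uniqueness.} Properties (1) and (2) and continuity now hold by construction, (3) is the limiting homogeneity, (4) is precisely \eqref{eq:gromov-product_extension}, and (6) is Proposition \ref{prop:intersection_MF}. For (5) the $\GmTeich$ case is \eqref{eq:gromov-product_extension} with $y=z$ (so $\gromov{y}{y}{x_0}=d_T(x_0,y)$), while the $\Gmbdy$ case follows from joint continuity by choosing $y_n\to p\in\partialGM{\TeichbX}$, along which $d_T(x_0,y_n)\to\infty$, whence $i(\mathfrak{a},\mathfrak{a})=\lim_n t^2e^{-2d_T(x_0,y_n)}=0$. For independence of the basepoint, \eqref{eq:gromov-product_extension} yields the intrinsic identity $i(\mathfrak{a},\mathfrak{b})=c_1c_2\,e^{d_T(y,z)}$ whenever $\mathfrak{a}=c_1\,\ext_y(\cdot)^{1/2}$ and $\mathfrak{b}=c_2\,\ext_z(\cdot)^{1/2}$ lie in $\GmTeich$ (here $c_1,c_2>0$ and $y,z\in\TeichbX$ are determined by $\mathfrak{a},\mathfrak{b}$); the right-hand side involves no $x_0$, and continuity together with density of $\GmTeich\times\GmTeich$ in $\GmInv\times\GmInv$ propagates basepoint-independence everywhere. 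The same density, using only (3) and (4), gives uniqueness: any continuous function with these properties agrees with $i$ on $\GmTeich\times\GmTeich$ and hence on all of $\GmInv\times\GmInv$.
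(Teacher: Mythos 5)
Your proposal is correct and follows essentially the same route as the paper: both rest on the equicontinuity of $\{\mathcal{E}_y\}_{y\in\TeichbX}$ (Proposition \ref{prop:equicontinuity}), the symmetry \eqref{eq:symmetry_E_p}, compactness of the bounded sets $\GmInvC(R)$ (Proposition \ref{prop:bdd_implies_compact}), and density of $\GmTeichC$, and then transport via $\tilde{\Psi}_{x_0}$ with the same basepoint-independence and uniqueness arguments. The only difference is cosmetic: where the paper extracts Ascoli--Arzel\`a limits and identifies them through the symmetry on the dense subcone, you use the symmetry to upgrade equicontinuity to joint uniform continuity on $\GmTeichC\times\GmTeichC$ and invoke the extension theorem for uniformly continuous functions, which is a valid repackaging of the same mechanism (note only that near $\zeta=0$ the $U_\delta$-modulus must be converted to a metric modulus via the standard compactness argument, which your appeal to compactness of $\GmInvC(R)$ implicitly supplies).
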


%The only difference is the item (iv) in Theorem.
Theorem \ref{thm:main_realization_0} follows from Theorem \ref{thm:pairing_intrinsic}.
Indeed,
the only difference is the item (iv) in Theorem \ref{thm:main_realization_0}:
From (3) in Theorem \ref{thm:pairing_intrinsic},
we have
\begin{align}
i(\tilde{\Phi}_{GM}(y),\tilde{\Phi}_{GM}(z))
&=\exp(d_T(x_0,y))\cdot \exp(d_T(x_0,z))
\cdot
i(\Psi_{x_0}(y),\Psi_{x_0}(z))
\label{eq:exchange}\\
&=
\exp(d_T(y,z))
\nonumber 
\end{align}
for $y,z\in \TeichbX$.
%which implies (iv) of Theorem \ref{thm:main_realization_0}.

\subsubsection*{Corollaries}
We give two corollaries of Theorem \ref{thm:pairing_intrinsic}
before proving the theorem.
%The first corollary tells us that the formula
%\eqref{eq:tautological_formula} holds for arbitrary elements
%in $\GmInv$.

%The second corollary tells us that
%our intersection number has an expected property.
%Namely,
%Minsky's inequality \eqref{eq:minsky_inequality}
%holds for elements of $\GmInv$.

\begin{corollary}[Minsky's inequality]
\label{coro:minsky_inequality_extention}
For $x\in \TeichbX$ and $\mathfrak{a},\mathfrak{b}\in \GmInv$,
we have
\begin{equation}
\label{eq:minsky_inequality_assertion}
i(\mathfrak{a},\mathfrak{b})^2\le
\ext_x(\mathfrak{a})\,\ext_x(\mathfrak{b}).
\end{equation}
The equality holds if the projective classes
of $\mathfrak{a}, x$ and $\mathfrak{b}$ are on a common Teichm\"uller geodesic
in this order.
\end{corollary}

\begin{proof}
Suppose that $\mathfrak{a},\mathfrak{b}\in \GmTeich$.
Take $ty,sz\in \GmTeichC$
with $\mathfrak{a}=\tilde{\Psi}_{x_0}(ty)$
and $\mathfrak{b}=\tilde{\Psi}_{x_0}(sz)$.
Then,
by Lemma \ref{lem:measured_foliation_2},
we have
\begin{align}
i(\mathfrak{a},\mathfrak{b})^2
&=
i_{x_0}(\tilde{\Psi}_{x_0}(ty),\tilde{\Psi}_{x_0}(sz))^2
%=t^2s^2\,i_{x_0}(\Psi_{x_0}(y),\Psi_{x_0}(z))^2
=t^2s^2\exp(-4\gromov{y}{z}{x_0}) \nonumber \\
&=t^2s^2\exp(2d_T(y,z)-2d_T(x_0,y)-2d_T(x_0,z)) \nonumber \\
&\le t^2s^2\exp(2d_T(x,y)-2d_T(x_0,y))
\cdot
\exp(2d_T(x,z)-2d_T(x_0,z)) \label{eq:minsky-inequality_sharp}\\
&
%=t^2s^2\extB^{x_0}_{x}(y)\cdot \extB^{x_0}_{x}(z)
=\extB^{x_0}_{x}(ty)\cdot \extB^{x_0}_{x}(sz)
=\ext_x(\mathfrak{a})\cdot \ext_x(\mathfrak{b}).
\nonumber 
\end{align}
Since $\GmTeich$ is dense in $\GmInv$,
we have the desired inequality.

Suppose the projective classes of
$\mathfrak{a}, x$ and $\mathfrak{b}$ are on
a common Teichm\"uller geodesic $\gamma:\mathbb{R}\to \TeichbX$
in this order.
We may assume that $\mathfrak{a},\mathfrak{b}\in \partialGM{\TeichbX}$
since intersection number and extremal length are homogeneous.
%Namely,
%we suppose $\gamma(t)\to \mathfrak{a}$ and $\gamma(-t)\to \mathfrak{b}$
%when $t\to \infty$.
%The other cases can be treated in the same manner.
From the assumption,
we may choose $\gamma$ such that
$\gamma(t)\to \mathfrak{a}$ and $\gamma(-t)\to \mathfrak{b}$
when $t\to \infty$.
Therefore,
from \eqref{eq:minsky-inequality_sharp}
we have
$$
i(\gamma(t),\gamma(-t))^2=\ext_x(\gamma(t))\cdot \ext_x(\gamma(-t))
$$
for sufficiently large $t>0$.
By letting $t\to \infty$,
we get the equality in \eqref{eq:minsky_inequality_assertion}.
\qed
\end{proof}

\begin{corollary}[Intrinsic representation of extremal length]
For $y\in \TeichbX$ and $\mathfrak{a}\in \GmInv$,
we have
\begin{equation}
\label{eq:extremal_representation_intrinsic}
\ext_y(\mathfrak{a})=
\sup_{F\in \mf-\{0\}}
\frac{i(\mathfrak{a},F)^2}{\ext_y(F)}
=\sup_{\mathfrak{b}\in \GmInv-\{0\}}
\frac{i(\mathfrak{a},\mathfrak{b})^2}{\ext_y(\mathfrak{b})}.
\end{equation}
%where $\mf$ in the supremum in \eqref{eq:extremal_representation_intrinsic}
%is recognized as a subset of $\GmInv\subset \mathbb{R}_+^{\mathcal{S}}$.
%and $\ext_y(\,\cdot\,)$ in the denominator is the extremal length
%obtained in Theorem \ref{thm:intrinsic_extremal_length}.
\end{corollary}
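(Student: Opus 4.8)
The plan is to reduce the statement to the definition \eqref{eq:extremal_length_cone} of the extremal length on the model $\GmInvC$ and then transport everything through the model map $\tilde{\Psi}_{x_0}$. First I would write $\mathfrak{a}=\tilde{\Psi}_{x_0}(\zeta)$ with $\zeta\in\GmInvC$, using that $\tilde{\Psi}_{x_0}$ is a homeomorphism. By Theorem~\ref{thm:intrinsic_extremal_length} (1) one has $\ext_y(\mathfrak{a})=\extB^{x_0}_y(\zeta)$, and the definition \eqref{eq:extremal_length_cone} expresses this as $\sup_{\eta\in\mfC-\{0\}} i_{x_0}(\zeta,\eta)^2/\ext_y(\tilde{\Psi}_{x_0}(\eta))$. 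I would then push this supremum forward: since $\tilde{\Psi}_{x_0}(\mfC)=\mf$ by \eqref{eq:psi_mfC_to_MF}, the substitution $F=\tilde{\Psi}_{x_0}(\eta)$ turns the index set $\eta\in\mfC-\{0\}$ into $F\in\mf-\{0\}$, while Theorem~\ref{thm:pairing_intrinsic} (1) gives $i_{x_0}(\zeta,\eta)=i(\mathfrak{a},F)$ and the denominator $\ext_y(\tilde{\Psi}_{x_0}(\eta))$ is just the original $\ext_y(F)$ (consistent with the intrinsic one by Theorem~\ref{thm:intrinsic_extremal_length} (2)). This yields the first equality $\ext_y(\mathfrak{a})=\sup_{F\in\mf-\{0\}} i(\mathfrak{a},F)^2/\ext_y(F)$.

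For the second equality I would argue by two opposite inequalities. Because $\mf-\{0\}\subset\GmInv-\{0\}$, the supremum over $\GmInv-\{0\}$ dominates the one over $\mf-\{0\}$, so $\ext_y(\mathfrak{a})$ (the first supremum) is at most the larger supremum. For the reverse direction I would invoke the extended Minsky inequality of Corollary~\ref{coro:minsky_inequality_extention}: for every $\mathfrak{b}\in\GmInv$ one has $i(\mathfrak{a},\mathfrak{b})^2\le\ext_y(\mathfrak{a})\,\ext_y(\mathfrak{b})$. For $\mathfrak{b}\neq 0$ the denominator $\ext_y(\mathfrak{b})$ is strictly positive by the non-triviality Lemma~\ref{lem:extremal_nontrivial}, so dividing gives $i(\mathfrak{a},\mathfrak{b})^2/\ext_y(\mathfrak{b})\le\ext_y(\mathfrak{a})$; taking the supremum over $\mathfrak{b}\in\GmInv-\{0\}$ bounds the larger supremum above by $\ext_y(\mathfrak{a})$. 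Combining the two inequalities forces both suprema to equal $\ext_y(\mathfrak{a})$.

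Since the argument is essentially bookkeeping through the model map together with one application of the extended Minsky inequality, I do not expect a serious obstacle. The one point requiring care is that every quotient appearing is well defined, i.e.\ that each extremal length in a denominator is nonzero: on the $\mf$-side this is the positivity of the extremal length of a nonzero measured foliation, and on the $\GmInv$-side it is exactly Lemma~\ref{lem:extremal_nontrivial}, which is what prevents the extended supremum from being vacuously infinite. A secondary remark, not needed for the statement but clarifying the geometry, is that the first supremum is actually attained: by compactness of $\pmfC$ and the sharpness relation \eqref{eq:equality_definition_extremal_length_mfC}, there exists a maximizing measured foliation, which is why \eqref{eq:extremal_length_cone} is written with $\max$ rather than $\sup$.
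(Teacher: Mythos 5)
Your proposal is correct and follows essentially the same route as the paper's proof: the first equality is obtained exactly as the paper does, by transporting the defining supremum \eqref{eq:extremal_length_cone} through the model map $\tilde{\Psi}_{x_0}$ using Theorem \ref{thm:intrinsic_extremal_length} and Theorem \ref{thm:pairing_intrinsic}, and the second equality is deduced from the extended Minsky inequality of Corollary \ref{coro:minsky_inequality_extention}. The only difference is that you spell out steps the paper leaves implicit, namely the trivial inclusion $\mf-\{0\}\subset \GmInv-\{0\}$ for one direction and the positivity of the denominators via Lemma \ref{lem:extremal_nontrivial}, which is a harmless (and welcome) amount of extra bookkeeping.
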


\begin{proof}
%Notice from Lemma \ref{lem:measured_foliation_2}
%that
%for $F\in \mf\subset \GmInv$,
%\begin{equation}
%\label{eq:extremal_length_intrinsic_1}
%\extB^{x_0}_y\circ \tilde{\Psi}_{x_0}^{-1}(F)=\ext_y(F).
%\end{equation}
%and the both sides of the equation \eqref{eq:extremal_length_intrinsic_1} 
%are equal to the original extremal length of $F$
%which is the extension of the extremal length on $\mathcal{S}$
%defined as \eqref{eq:extremal_length_original},
%where the $\ext_y(\,\cdot\,)$ in \eqref{eq:extremal_length_intrinsic_1} is the
%extremal length obtained in Theorem \ref{thm:intrinsic_extremal_length}.
Notice that in the definition \eqref{eq:extremal_length_cone}
of the extremal length,
the measured foliation $F$ in the denominator in \eqref{eq:extremal_length_cone}
is taken in $\mfC-\{0\}\subset \GmInvC$.
Therefore,
by (2) of Theorem \ref{thm:intrinsic_extremal_length}
and Theorem \ref{thm:pairing_intrinsic},
for $\mathfrak{a}\in \GmInv$,
we have
%\begin{align*}
$$
\ext_y(\mathfrak{a})
=
\extB^{x_0}_y\circ \tilde{\Psi}_{x_0}^{-1}(\mathfrak{a})
%&=
%\sup_{F\in \mf-\{0\}}
%\frac{i_{x_0}\left(
%\tilde{\Psi}_{x_0}^{-1}(\mathfrak{a}),
%\tilde{\Psi}_{x_0}^{-1}(F)
%\right)}
%{\extB^{x_0}_y\circ \tilde{\Psi}_{x_0}^{-1}(F)}\\
=
\sup_{F\in \mf-\{0\}}
\frac{i_{x_0}\left(
\tilde{\Psi}_{x_0}^{-1}(\mathfrak{a}),
\tilde{\Psi}_{x_0}^{-1}(F)
\right)^2}
{\ext_y(F)}=
\sup_{F\in \mf-\{0\}}
\frac{i\left(
\mathfrak{a},
F
\right)^2}
{\ext_y(F)}.
$$
%\end{align*}
%where $F$ in the supremum above runs over all $F\in \mf-\{0\}\subset \GmInv$.
%and the extremal length in the denominator is defined in Theorem \ref{thm:intrinsic_extremal_length}.
%(cf. \eqref{eq:intersection_number_intrinsic}).
The second equality follows from Corollary \ref{coro:minsky_inequality_extention}.
\qed
\end{proof}

\subsection{Extension of the intersection number $i_{x_0}$}
\label{subsec:extension_i_x_0}
To show Theorem \ref{thm:pairing_intrinsic},
we first extend the intersection number
\eqref{eq:intersection_number_pre-definition}
to the whole $\GmInvC\times \GmInvC$.
%In this section,
%we identify
%$\cl{\TeichbX}\times \mathbb{R}_+/(\cl{\TeichbX}\times \{0\})$
%with $\GmInv$ via $\tilde{\Psi}_{x_0}$.

%
%
%At first glance,
%the extension is thought to depend the choice of the base point.
%However,
%as we see in \S\ref{subsec:IntersectionNumber_is_intrinsic},
%the intersection number is actually defined intrinsically on $\GmInv$.

\begin{proposition}[Extension of $i_{x_0}$]
\label{prop:pairing}
For any $x_0\in \TeichbX$,
there exists a unique continuous function
\begin{equation}
\label{eq:new-intersection-number_basepoint}
i_{x_0}(\cdot,\cdot):\GmInvC\times \GmInvC\to \mathbb{R}_+
\end{equation}
such that
\begin{itemize}
\item[{\rm (1)}]
For $ty\in \GmTeichC$ and $sp\in \GmbdyC$
with $y\in \TeichbX$, $p\in \partialGM{\TeichbX}$ and $t,s\ge 0$,
$$
i_{x_0}(ty,sp)=ts\,\mathcal{E}_y(p)=ts\,e^{-d_T(x_0,y)}\extB^{x_0}_y(p)^{1/2};
$$
\item[{\rm (2)}]
$i_{x_0}(\zeta,\eta)=i_{x_0}(\eta,\zeta)$
for $\zeta,\eta\in \GmInvC$;
\item[{\rm (3)}]
$i_{x_0}(s\zeta,t\eta)=st\cdot i_{x_0}(\zeta,\eta)$
for $s,t\ge 0$ and $\zeta,\eta\in \GmInvC$;
\item[{\rm (4)}]
for $y,z\in \TeichbX$, $i_{x_0}(y,z)=\exp(-2\gromov{y}{z}{x_0})$;
\item[{\rm (5)}]
for $\zeta=tp\in \GmInvC$ with $p\in \cl{\TeichbX}$;
$$
i_{x_0}(\zeta,\zeta)=
\left\{
\begin{array}{cl}
t^2\exp(-2d_T(x_0,p)) & \mbox{if $\zeta\in \GmTeichC$} \\
0 & \mbox{if $\zeta\in \GmbdyC$};
\end{array}
\right.
$$
%\item[{\rm (6)}]
%For $ty\in \GmTeichC$ and $sp\in \GmbdyC$
%with $y\in \TeichbX$, $p\in \partialGM{\TeichbX}$ and $t,s\ge 0$,
%$$
%i_{x_0}(ty,sp)=ts\,\mathcal{E}_y(p)=ts\,e^{-d_T(x_0,y)}\extB^{x_0}_y(p)^{1/2}
%$$
%and
\item[{\rm (6)}]
$i_{x_0}(\tilde{\Psi}_{x_0}^{-1}(F),\tilde{\Psi}_{x_0}^{-1}(G))=I(F,G)$
for all $F,G\in \mf$.
\end{itemize}
\end{proposition}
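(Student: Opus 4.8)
The plan is to construct $i_{x_0}$ from the already–continuous function $\mathcal{E}_{(\cdot)}(\cdot)$ of \eqref{eq:E_y-2} by pushing its subscript out to the boundary, and then to set $i_{x_0}(tp,sq)=ts\,\mathcal{E}_p(q)$ for $p,q\in\cl{\TeichbX}$ and $t,s\ge 0$ after extending $\mathcal{E}_p$ to all $p\in\cl{\TeichbX}$. For $y\in\TeichbX$ the function $\mathcal{E}_y\colon\GmInvC\to\mathbb{R}_+$ is continuous by Lemma \ref{lem:continuity}, and the distortion property \eqref{eq:distortion_property_extension} (with $y_1=x_0$) gives $\mathcal{E}_y(\zeta)^2=e^{-2d_T(x_0,y)}\extB^{x_0}_y(\zeta)\le\extB^{x_0}_{x_0}(\zeta)$, so on each sublevel set of $\extB^{x_0}_{x_0}$, which is compact by Proposition \ref{prop:bdd_implies_compact}, the family $\{\mathcal{E}_y\}_{y\in\TeichbX}$ is uniformly bounded; by Proposition \ref{prop:equicontinuity} it is also equicontinuous. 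First I would fix $p\in\partialGM{\TeichbX}$ and a sequence $y_n\to p$ in $\TeichbX$, and apply the Arzel\`a--Ascoli theorem on the locally compact, separable, metrizable space $\GmInvC$ to extract uniform-on-compacta convergent subsequences of $\{\mathcal{E}_{y_n}\}$.

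The main obstacle, and the heart of the argument, is to see that all these subsequential limits coincide and depend neither on the subsequence nor on the chosen $y_n\to p$. I would pin the limit down on the dense subcone $\GmTeichC$: for $w\in\TeichbX$ the interior symmetry \eqref{eq:symmetry_E_p} gives $\mathcal{E}_{y_n}(w)=\mathcal{E}_w(y_n)$, and since $\mathcal{E}_w$ is continuous on $\GmInvC$ this converges to $\mathcal{E}_w(p)$, a value independent of all choices. As every Arzel\`a--Ascoli limit is continuous and these limits agree on the dense set $\GmTeichC$, the whole sequence converges to a single continuous function $\mathcal{E}_p\colon\GmInvC\to\mathbb{R}_+$ with $\mathcal{E}_p(tw)=t\,\mathcal{E}_w(p)$. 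Because the estimate \eqref{eq:equicontinuity_extension} is uniform in the subscript it passes to these limits, so $\{\mathcal{E}_p\}_{p\in\cl{\TeichbX}}$ is again equicontinuous; since $p\mapsto\mathcal{E}_p(tw)$ is continuous for each $w\in\TeichbX$, equicontinuity upgrades this to continuity of $p\mapsto\mathcal{E}_p(\zeta)$ for every $\zeta\in\GmInvC$, and a further three-term equicontinuity estimate yields joint continuity of $(p,\zeta)\mapsto\mathcal{E}_p(\zeta)$ on $\cl{\TeichbX}\times\GmInvC$. With $i_{x_0}(tp,sq):=ts\,\mathcal{E}_p(q)$, continuity and homogeneity (3) are then immediate, and symmetry (2) holds on the dense interior pairs by \eqref{eq:symmetry_E_p}, hence everywhere by continuity.

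It remains to read off the listed values and uniqueness. Property (1) is the definition restricted to $\GmTeichC\times\GmbdyC$, property (4) is \eqref{eq:gromov-product_extension} with $s=t=1$, and uniqueness follows because a continuous function obeying (3) and (4) is determined on the dense set $\GmTeichC\times\GmTeichC$. For the self-intersection (5), on $\GmTeichC$ one has $i_{x_0}(ty,ty)=t^2\exp(-2\gromov{y}{y}{x_0})=t^2\exp(-2d_T(x_0,y))$; for $tp\in\GmbdyC$ I would take $y_n\to p$ in $\TeichbX$, note that $d_T(x_0,y_n)\to\infty$ since $\TeichbX$ is $d_T$-proper and $p$ is a boundary point, and pass to the limit along the diagonal to obtain $i_{x_0}(tp,tp)=t^2\lim_n\exp(-2d_T(x_0,y_n))=0$. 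Finally, on $\GmInvC\times\mfC$ the construction reproduces the original $i_{x_0}$ of \eqref{eq:intersection_number_pre-definition}: for $F\in\mfC$ the value $\mathcal{E}_{y_n}(F)$ equals the function \eqref{eq:E_y} by Lemma \ref{lem:measured_foliation_2}, whose boundary limit is the original $\mathcal{E}_p$ by Proposition \ref{prop:continuity_E_p}, so (6) follows from Proposition \ref{prop:intersection_MF}. The crux is the second paragraph; once the boundary extension $\mathcal{E}_p$ is shown to be well defined and jointly continuous, the remaining items are bookkeeping.
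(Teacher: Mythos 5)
Your proposal is correct and follows essentially the same route as the paper: both arguments treat $\{\mathcal{E}_y\}_{y\in\TeichbX}$ as a normal family (equicontinuity from Proposition \ref{prop:equicontinuity} plus the uniform bound $\mathcal{E}_y(\zeta)\le \extB^{x_0}_{x_0}(\zeta)^{1/2}$), extract Arzel\`a--Ascoli limits, and pin every subsequential limit down on the dense cone $\GmTeichC$ via the symmetry \eqref{eq:symmetry_E_p} together with Lemma \ref{lem:continuity}, after which properties (1)--(6) and uniqueness follow by density and continuity. Your additions (the explicit three-term argument for joint continuity, uniqueness via (3)+(4), and the properness argument giving $d_T(x_0,y_n)\to\infty$ in the boundary case of (5)) are just fuller write-ups of steps the paper leaves terse, not a different method.
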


\begin{proof}
Consider the equicontinuous family $\{\mathcal{E}_y\}_{y\in \TeichbX}$
given in Proposition \ref{prop:equicontinuity}.
For any $\zeta\in \GmInvC$,
$$
\mathcal{E}_y(\zeta)=
\left\{
\frac{\extB^{x_0}_y(\zeta)}{K_y}
\right\}^{1/2}
\le \extB^{x_0}_{x_0}(\zeta)^{1/2}.
$$
By Proposition \ref{prop:bdd_implies_compact},
the family $\{\mathcal{E}_y\}_{y\in \TeichbX}$ is uniformly bounded on any compact set.
Therefore,
the family is a normal family.

Let $\zeta\in \GmInvC$.
%By Proposition \ref{prop:bdd_implies_compact} again,
%there are 
Let $p\in \cl{\TeichbX}$ and $t\ge 0$ such that $\zeta=tp$.
Let $\{y_n\}_{n=1}^\infty$ be a sequence converging to $p$.
Take a sequence $\{t_n\}_{n=1}^\infty$ of positive numbers with $t_n\to t$.
By Ascoli-Arzel\`a theorem,
there is a subsequence $\{y_{n_j}\}_j$ such that
a sequence $\{\mathcal{E}_{t_{n_j}y_{n_j}}\}_j$ converges
to the continuous function $\mathcal{E}'$ on $\GmInvC$
uniformly on any compact set.
%As remarked at the beginning of \S\ref{sec:extremal_length_GmInv},
%$$
%\mathcal{E}'(F)=\mathcal{E}_{\zeta}(F)
%$$
%holds for all $F\in \mf$.
%Then,
%by Proposition \ref{prop:continuity_E_p},
%$\mathcal{E}_{y_n}$ converges to $\mathcal{E}_{p}$
%uniformly on any compact set of $\mf$.
Notice from Lemma \ref{lem:continuity} and \eqref{eq:symmetry_E_p}
that for $sz\in \GmTeichC$,
\begin{equation}
\label{eq:limit_on_GmTeich}
\mathcal{E}'(sz)=
\lim_{j\to \infty}
\mathcal{E}_{t_{n_j}y_{n_j}}(sz)
=
\lim_{j\to \infty}
\mathcal{E}_{sz}(t_{n_j}y_{n_j})
=\mathcal{E}_{sz}(\zeta).
\end{equation}
Take another sequence $\{t'_ky'_k\}_k$ 
in $\GmTeichC$ which tends to $\zeta$
such that $\mathcal{E}_{t_ky'_k}$
converges to a continuous function $\mathcal{E}''$ on $\GmInvC$
uniformly on any compact set of $\GmInvC$.
Since the right-hand side of \eqref{eq:limit_on_GmTeich}
is independent of converging sequences,
the same conclusion holds for $\mathcal{E}''$.
Namely,
we have
$$
\mathcal{E}''(sz)=\mathcal{E}_{sz}(\zeta)=\mathcal{E}'(sz)
$$
for all $sz\in \GmTeichC$.
Since $\GmTeichC$ is dense in $\GmInvC$ and both $\mathcal{E}''$ and $\mathcal{E}'$
are continuous on $\GmInvC$,
$\mathcal{E}''=\mathcal{E}'$ on $\GmInvC$.
This means that the limit $\mathcal{E}'$ above is dependent only on $\zeta$,
independent of the choice of the sequence $\{y_n\}_{n=1}^\infty$
converging to $\zeta$.
We denote by $i_{x_0}(\zeta,\cdot )$ the limit.

For any $R>0$,
notice again that
$\{\mathcal{E}_{sy}\}_{sy\in \GmInvC(R)}$ is a normal family
of continuous functions on $\GmInvC$.
From the argument above,
\begin{equation}
\label{eq:Gminv-GmInv}
\GmInvC\times \GmInvC\ni (\zeta,\eta)\mapsto i_{x_0}(\zeta,\eta)
\end{equation}
is continuous in two variables.
The condition (1) in the statement follows from the construction 
and \eqref{eq:E_y-2}.
From the 
density of $\GmTeichC\times \GmInvC$ in $\GmInvC\times \GmInvC$
we deduce the uniqueness of our function $i_{x_0}(\cdot,\cdot)$.

Let us check that our function $i_{x_0}(\cdot,\cdot)$
satisfies the remaining conditions (2) to (5) in the statement.
Indeed,
(2) and (3) follows from 
the density of $\GmTeichC$ in $\GmInvC$
and equations \eqref{eq:bilinearity} and \eqref{eq:symmetry_E_p}.
We get (4) from \eqref{eq:gromov-product_extension}.
The condition (5) is verified from
$$
i_{x_0}(\zeta,\zeta)=t^2\exp(-2\gromov{y}{y}{x_0})
=t^2\exp(-d_T(x_0,y))
$$
when $\zeta=ty\in \GmTeichC$
and the continuity of the function $i_{x_0}(\cdot,\cdot)$.
%The condition (6) is deduced from \eqref{eq:E_y-2}.
The last condition (6) follows from Proposition \ref{prop:intersection_MF}.
%the discussion at the beginning of \S\ref{sec:extremal_length_GmInv}.
\qed
\end{proof}
%
%\subsection{Intersection number is intrinsic}
%\label{subsec:IntersectionNumber_is_intrinsic}
%In this section,
%as in the situation of Theorem \ref{thm:pairing_intrinsic},
%we do not identify
%$\cl{\TeichbX}\times \mathbb{R}_+/(\cl{\TeichbX}\times \{0\})$
%with $\GmInvC$ via $\tilde{\Psi}_{x_0}$.
%Namely,
%we think of $\GmInvC$ merely as a subset of $\mathbb{R}_+^{\mathcal{S}}$.
\paragraph{{\bf Proof of Theorem \ref{thm:pairing_intrinsic}.}}
Theorem \ref{thm:pairing_intrinsic}
immediately follows from Proposition \ref{prop:pairing}.
Indeed,
we define
\begin{equation}
\label{eq:intersection_number_intrinsic}
i(\mathfrak{a},\mathfrak{b})
=i_{x_0}(\tilde{\Psi}_{x_0}^{-1}(\mathfrak{a}),\tilde{\Psi}_{x_0}^{-1}(\mathfrak{b}))
\end{equation}
for $\mathfrak{a},\mathfrak{b}\in \GmInv$.
By applying the similar argument as that in the proof of
Theorem \ref{thm:intrinsic_extremal_length},
one can see
that the intersection
number \eqref{eq:intersection_number_intrinsic} is intrinsic in the sense that
the value is independent of the choice of basepoint
$x_0$.
\qed

\subsection{Extension of the Gromov product}
\label{subsec:extension_Gromov}
In this section,
we give a proof of Corollary \ref{coro:Gromov_product_boundary}.
%which asserts that the Gromov product admits
%a unique continuous extension to $\cl{\TeichbX}$.
%
%\begin{proof}[Proof of Corollary \ref{coro:Gromov_product_boundary}]
The uniqueness of the extension follows from the density of $\TeichbX$ in $\cl{\TeichbX}$
and the condition (1) in Corollary \ref{coro:Gromov_product_boundary}.
Hence it suffices to show the existence.

Define
\begin{equation} \label{eq:new-pairing}
\gromov{p}{q}{x_0}
=-\frac{1}{2}\log i_{x_0}(p,q)
\end{equation}
for $p,q\in \cl{\TeichbX}$,
where $\cl{\TeichbX}$ is identified with
a subset via the embedding \eqref{eq:Lift_GM}.
Notice from Proposition \ref{prop:bdd_implies_compact} and
Corollary \ref{coro:minsky_inequality_extention}
that $i_{x_0}(p,q)\le 1$ for $p,q\in \cl{\TeichbX}$.
%Hence
%$i_{x_0}(\cdot,\cdot)$ is a non-negative continuous function
%on $\cl{\TeichbX}\times \cl{\TeichbX}$.
%by Proposition \ref{prop:pairing}.
Therefore,
the pairing $\gromov{\cdot}{\cdot}{x_0}$
defined above is continuous with value in $[0,\infty]$.
From (4) of Proposition \ref{prop:pairing},
the pairing \eqref{eq:new-pairing}
coincides with the Gromov product with basepoint $x_{0}$.
%satisfies the condition (1)
%in the assertion.
Since
\begin{align*}
I(F,G)&=i_{x_0}(\tilde{\Psi}_{x_0}^{-1}(F),\tilde{\Psi}_{x_0}^{-1}(G))
=i_{x_0}(\ext_{x_0}(F)^{1/2}[F],\ext_{x_0}(G)^{1/2}[G]) \\
&=\ext_{x_0}(F)^{1/2}\cdot \ext_{x_0}(G)^{1/2} i_{x_0}([F],[G]),
\end{align*}
we conclude (2) of Corollary \ref{coro:Gromov_product_boundary}.
%$$
%\exp(-2\gromov{[F]}{[G]}{x_0})=
%i_{x_0}([F],[G])=
%\frac{I(F,G)}{\ext_{x_0}(F)^{1/2}\cdot \ext_{x_0}(G)^{1/2}},
%$$
%which is what we wanted.
\qed
%\end{proof}

\section{Isometric action on Teichm\"uller space}
%$\TeichbX$}
\label{sec:isometric_action}
An orientation preserving
homeomorphism $h\colon X\to X$ induces 
a homeomorphic action $h_*$
on $\partialGM{\Teich{g}{m}}$ by the equation
\begin{equation}
\label{eq:extension_homeo_on_X}
\mathcal{E}_{h_*(p)}(F)=t\,\mathcal{E}_p(h^{-1}(F))
\end{equation}
for all $F\in \mf$,
where $t>0$ is independent of $F$.
Indeed,
the action $h_*$ is the homeomorphic extension of
the Teichm\"uller modular group action on $\Teich{g}{m}$ induced by $h$
(cf. \S5.4 of \cite{Mi1}).
In this section,
we give a necessary condition for a mapping of $\partialGM{\Teich{g}{m}}$
to be induced from a homeomorphism on $X$.

\subsection{Null space}
\label{subsec:null_space}
For $\mathfrak{a}\in \GmInv$,
we define the \emph{null space of $\mathfrak{a}$}
by
$$
\mathcal{N}(\mathfrak{a})=
\{\mathfrak{b}\in \GmInv\mid i(\mathfrak{a},\mathfrak{b})=0\}.
$$
By definition,
$0\in \mathcal{N}(\mathfrak{a})$ for all $\mathfrak{a}\in \GmInv$.
We remark the following simple claim.

\begin{proposition}
\label{prop:translation_null_space1}
The following hold.
\begin{itemize}
\item[(1)]
For $\mathfrak{a}\in \GmInv-\{0\}$,
$\mathcal{N}(\mathfrak{a})\ne \{0\}$
if and only if $\mathfrak{a}\in \Gmbdy$.
\item[(2)]
$\mathcal{N}(\mathfrak{a})\subset \Gmbdy$
for all $\mathfrak{a}\in \GmInv$.
\item[(3)]
$\mathcal{N}(\mathfrak{a})\cap \mf\ne \{0\}$
for $\mathfrak{a}\in \Gmbdy$.
\end{itemize}
\end{proposition}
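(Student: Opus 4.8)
The plan is to prove the three statements in order, exploiting the characterizations of the intersection number established in Theorem~\ref{thm:pairing_intrinsic} together with the description of self-intersection and Minsky's inequality. Throughout I would fix a basepoint $x_0$ and freely use the homeomorphism $\tilde{\Psi}_{x_0}$ to pass between $\GmInv$ and its model $\GmInvC$, as well as the formula $i(\mathfrak{a},\mathfrak{b})=i_{x_0}(\tilde{\Psi}_{x_0}^{-1}(\mathfrak{a}),\tilde{\Psi}_{x_0}^{-1}(\mathfrak{b}))$.

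For statement (2) I would argue as follows. Let $\mathfrak{a}\in\GmInv$ and suppose $\mathfrak{b}\in\mathcal{N}(\mathfrak{a})$, so $i(\mathfrak{a},\mathfrak{b})=0$. If $\mathfrak{b}\in\GmTeich-\{0\}$, write $\mathfrak{b}=\tilde{\Psi}_{x_0}(sz)$ with $s>0$ and $z\in\TeichbX$. By part (5) of Theorem~\ref{thm:pairing_intrinsic} we would have $i(\mathfrak{b},\mathfrak{b})=s^2\exp(-2d_T(x_0,z))>0$, so $\mathfrak{b}\neq 0$ has strictly positive self-intersection; but Minsky's inequality (Corollary~\ref{coro:minsky_inequality_extention}) gives $i(\mathfrak{a},\mathfrak{b})^2\le\ext_x(\mathfrak{a})\,\ext_x(\mathfrak{b})$, which alone does not force a contradiction. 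The cleaner route is to observe that $\GmTeich$ consists exactly of the elements of $\GmInv$ with positive self-intersection, whereas any $\mathfrak{b}$ killed by some $\mathfrak{a}$ should itself be degenerate; concretely, I would instead use that for $\mathfrak{b}=\tilde{\Psi}_{x_0}(sz)\in\GmTeich-\{0\}$ the function $\alpha\mapsto i(\mathfrak{b},F)$ reproduces (a multiple of) the extremal-length data of $z$ via part (1), so $i(\mathfrak{b},F)>0$ for every $F\in\mf-\{0\}$, forcing $\mathcal{N}(\mathfrak{b})\cap\mf=\{0\}$. Applying this with the roles reversed and the symmetry $i(\mathfrak{a},\mathfrak{b})=i(\mathfrak{b},\mathfrak{a})$ shows $\mathfrak{a}$ cannot annihilate an interior point unless $\mathfrak{a}=0$; hence $\mathcal{N}(\mathfrak{a})\subset\Gmbdy$.

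For statement (1), the direction ``$\mathfrak{a}\in\Gmbdy\Rightarrow\mathcal{N}(\mathfrak{a})\neq\{0\}$'' is the substantive one, and it is exactly where I expect the main work. Writing $\mathfrak{a}=\tilde{\Psi}_{x_0}(tp)$ with $p\in\partialGM{\TeichbX}$ and $t>0$, the self-intersection vanishes by part (5), i.e.\ $i(\mathfrak{a},\mathfrak{a})=0$, so $\mathfrak{a}$ itself lies in $\mathcal{N}(\mathfrak{a})$ and is nonzero; this already gives $\mathcal{N}(\mathfrak{a})\neq\{0\}$ cheaply. Conversely, if $\mathfrak{a}\in\GmTeich-\{0\}$, then part (5) gives $i(\mathfrak{a},\mathfrak{a})>0$ and the extremal-length representation \eqref{eq:extremal_representation_intrinsic} gives $\ext_y(\mathfrak{a})>0$ for any $y$; combined with the previous paragraph's observation that interior points pair positively against all of $\mf-\{0\}$, this yields $\mathcal{N}(\mathfrak{a})=\{0\}$, completing the equivalence.

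For statement (3) I would start from $\mathfrak{a}\in\Gmbdy$ and invoke Proposition~\ref{prop:zero-boundary}, which asserts that a point of $\cl{\TeichbX}$ lies in the boundary precisely when $\mathcal{E}_p(F)=0$ for some $F\in\mf-\{0\}$. Translating through the identifications $i(\tilde{\Psi}_{x_0}(p),F)=\mathcal{E}_p(F)$ from part (1) of Theorem~\ref{thm:pairing_intrinsic}, such an $F$ (viewed inside $\mf\subset\GmInv$) satisfies $i(\mathfrak{a},F)=0$ with $F\neq 0$, so $F\in\mathcal{N}(\mathfrak{a})\cap\mf$ witnesses $\mathcal{N}(\mathfrak{a})\cap\mf\neq\{0\}$. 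The only care needed is the scaling and the passage between the projective boundary point $p$ and its lift $\mathfrak{a}=t\,\tilde{\Psi}_{x_0}(p)$, which is harmless by the homogeneity in part (3). The main obstacle in the whole proposition is making rigorous, in statement (1)/(2), the claim that interior points pair strictly positively against every nonzero measured foliation; I expect this to follow from Minsky's inequality being an \emph{equality} only along geodesics together with the sharpness statement \eqref{eq:minsky_inequality_sharp}, but pinning down that $i(\mathfrak{b},F)$ never vanishes for $\mathfrak{b}\in\GmTeich-\{0\}$ and $F\in\mf-\{0\}$ is the step I would verify most carefully.
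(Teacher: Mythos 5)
Your proof of (3) and of the easy half of (1) (a boundary point lies in its own null space because $i(\mathfrak{a},\mathfrak{a})=0$) is correct and essentially identical to the paper's, up to replacing the paper's argument by contradiction for (3) with a direct appeal to Proposition \ref{prop:zero-boundary}. The gap is in (2) and in the converse half of (1). Your key claim is that an interior point $\mathfrak{b}=\tilde{\Psi}_{x_0}(sz)\in\GmTeich-\{0\}$ pairs strictly positively with every $F\in\mf-\{0\}$; this is true and elementary (by part (1) of Theorem \ref{thm:pairing_intrinsic}, $i(\mathfrak{b},F)=s\,e^{-d_T(x_0,z)}\ext_z(F)^{1/2}>0$, so no appeal to the sharpness of Minsky's inequality is needed). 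But this only gives $\mathcal{N}(\mathfrak{b})\cap\mf=\{0\}$, and "applying this with the roles reversed and the symmetry" is a non sequitur: the element $\mathfrak{a}$ annihilating $\mathfrak{b}$ is an arbitrary element of $\GmInv$, not a measured foliation, and the boundary cone $\Gmbdy$ is strictly larger than $\mf$ (the Gardiner--Masur boundary properly contains $\pmf$ in general). So positivity of the pairing of $\mathfrak{b}$ against $\mf-\{0\}$ does not, by itself, rule out a nonzero $\mathfrak{a}\in\Gmbdy$ with $i(\mathfrak{a},\mathfrak{b})=0$; yet both your statement (2) and your claim $\mathcal{N}(\mathfrak{a})=\{0\}$ for interior $\mathfrak{a}$ in (1) rest on exactly this.

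The missing ingredient is the paper's Lemma \ref{lem:extremal_nontrivial} (non-triviality of the extended extremal length), which you never invoke. For arbitrary $\mathfrak{a}=\tilde{\Psi}_{x_0}(tp)\in\GmInv-\{0\}$ and interior $\mathfrak{b}=\tilde{\Psi}_{x_0}(sz)$, part (1) of Theorem \ref{thm:pairing_intrinsic} gives
$$
i(\mathfrak{a},\mathfrak{b})=st\,e^{-d_T(x_0,z)}\,\extB^{x_0}_{z}(p)^{1/2},
$$
and Lemma \ref{lem:extremal_nontrivial} says $\extB^{x_0}_{z}(p)=0$ forces $p=0$; this is what yields $\mathcal{N}(\mathfrak{a})=\{0\}$ for interior $\mathfrak{a}$, hence (1), and then (2) follows by the symmetry argument. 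Alternatively you could close the gap by noting, via \eqref{eq:identification_GmInv}, that a nonzero $\mathfrak{a}$ is a nonzero function on $\mathcal{S}$ whose values are recovered as $\alpha\mapsto i(\mathfrak{a},\alpha)$, so $i(\mathfrak{a},\alpha)>0$ for some curve $\alpha$, whence $\extB^{x_0}_{z}(p)\ge i_{x_0}(p,\tilde{\Psi}_{x_0}^{-1}(\alpha))^2/\ext_z(\alpha)>0$. Either way, this positivity against \emph{all} nonzero cone elements, not just against $\mf-\{0\}$, is the step that must be supplied before your outline becomes a proof.
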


\begin{proof}
(1)\quad
If $\mathfrak{a}\in \GmTeich$,
$\mathcal{N}(\mathfrak{a})=\{0\}$
from Lemma \ref{lem:extremal_nontrivial} and
(1) of Theorem \ref{thm:pairing_intrinsic}.
If $\mathfrak{a}\in \Gmbdy$,
from (5) of Theorem \ref{thm:pairing_intrinsic},
$\mathfrak{a}\in \mathcal{N}(\mathfrak{a})$
and $\mathcal{N}(\mathfrak{a})\ne \{0\}$.

\medskip
(2)\quad
From (1) above,
$\mathcal{N}(\mathfrak{a})=\{0\}\subset \Gmbdy$ for $\mathfrak{a}\in \GmTeich$.
Let $\mathfrak{a}\in \Gmbdy$.
For any $\mathfrak{b}\in \mathcal{N}(\mathfrak{a})$,
$\mathfrak{a}\in \mathcal{N}(\mathfrak{b})\ne \{0\}$.
This means that $\mathcal{N}(\mathfrak{a})\cap \GmTeich=\{0\}$
for all $\mathfrak{a}\in \GmInv$.

\medskip
(3)\quad
Let $\mathfrak{a}\in \Gmbdy$.
Suppose $\mathfrak{a}=\tilde{\Psi}_{GM}(tp)$ for some $t\ge 0$
and $p\in \partialGM{\TeichbX}$.
If $\mathcal{N}(\mathfrak{a})\cap \mf=\{0\}$,
$$
t\mathcal{E}_p(F)=i(\tilde{\Psi}_{x_0}(tp),\tilde{\Psi}_{x_0}(F))
=i(\mathfrak{a},F)\ne 0
$$
for all $F\in \mf-\{0\}$
by
%\eqref{eq:intersection_number_pre-definition} and 
Theorem \ref{thm:pairing_intrinsic}.
By Proposition \ref{prop:zero-boundary},
this implies $p\in \TeichbX$,
which is a contradiction.
\qed
\end{proof}

Let $\omega$ be a mapping $\omega\colon \cl{\Teich{g}{m}}\to \cl{\Teich{g}{m}}$.
We extend the action of $\omega$ to $\GmInvC$ by
$$
H_\omega:\GmInvC\ni tp\mapsto t\,\omega(p)\in \GmInvC
$$
where $t\ge 0$ and $p\in \cl{\Teich{g}{m}}$.
Let $x_0\in \TeichbX$ be the basepoint as before.
We define a homeomorphism $h_\omega$ on $\GmInv$ by
$$
h_\omega=\tilde{\Psi}_{x_0}\circ H_\omega\circ \tilde{\Psi}_{x_0}^{-1}.
$$
%We also define $H_{\omega'}$ and $h_{\omega'}$ in the same manner.

\begin{proposition}
\label{prop:translation_null_space2}
Let $\omega\colon:\Teich{g}{m}\to \Teich{g}{m}$ be a mapping of bounded distortion
for triangles.
Suppose that $\omega$ admits a continuous extension to $\cl{\Teich{g}{m}}$.
Then,
for $\mathfrak{a},\mathfrak{b}\in \Gmbdy$,
$i(h_\omega(\mathfrak{a}),h_\omega(\mathfrak{b}))=0$
if and only if $i(\mathfrak{a},\mathfrak{b})=0$.
%\begin{itemize}
%\item[{\rm (1)}]
%for $\mathfrak{a}\in \GmInv$,
%$\mathcal{N}(h_\omega(\mathfrak{a}))\subset h_\omega(\mathcal{N}(\mathfrak{a}))$.
%\begin{equation}
%\label{eq:translation_null_space1}
%h_\omega(\mathcal{N}(\mathfrak{a}))\subset
%\mathcal{N}(h_\omega(\mathfrak{a})).
%\end{equation}
%and
%\begin{equation}
%\label{eq:translation_null_space2}
%h_\omega^{-1}(\mathcal{N}(\mathfrak{a}))
%\subset \mathcal{N}(\mathfrak{b})
%\end{equation}
%for all $\mathfrak{a}\in h_\omega^{-1}(\mathfrak{b})$.
%In particular,
%\begin{equation}
%\label{eq:h_omega_contained_N_h_omega3}
%\mathcal{N}(\mathfrak{b}_1)\cap \mathcal{N}(\mathfrak{b}_2)\ne \{0\}
%\end{equation}
%for all $\mathfrak{b}_1,\mathfrak{b}_2\in h_\omega^{-1}(\mathfrak{a})$
%and $\mathfrak{a}\in \Gmbdy$.
%%\item[{\rm (2)}]
Furthermore,
if $\omega$ has a quasi-inverse $\omega'$
which also admits a continuous extension to $\cl{\Teich{g}{m}}$,
then
\begin{equation}
\label{eq:translation_null_space2}
h_{\omega'}\circ h_{\omega}(\mathcal{N}(\mathfrak{a}))
\subset \mathcal{N}(\mathfrak{a})
\end{equation}
when $\mathfrak{a}\in \Gmbdy$.
%\end{itemize}
%Especially,
%{\rm (2)} also holds for any mapping of bounded distortion for triangles with continuous extension
%to $\partialGM{\TeichbX}$.
\end{proposition}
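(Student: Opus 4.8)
The plan is to translate the two assertions into statements about the Gromov product via the intersection number of Theorem~\ref{thm:pairing_intrinsic}, and to observe that vanishing of $i$ on the boundary is equivalent to divergence of the Gromov product along approximating interior sequences. By property~(3) of Theorem~\ref{thm:pairing_intrinsic} and the resulting scale-invariance of null spaces, I may represent the boundary points at level one, writing $\mathfrak a=\Psi_{x_0}(p)$ and $\mathfrak b=\Psi_{x_0}(q)$ with $p,q\in\partialGM{\TeichbX}$ (the cases with a zero scale factor being trivial). Choosing $y_n\to p$ and $z_n\to q$ in $\TeichbX$, the continuity of $i$, the continuity of $\Psi_{x_0}$ on $\cl{\TeichbX}$ (Proposition~\ref{prop:continuity_E_p}), and property~(4) of Theorem~\ref{thm:pairing_intrinsic} give
\[
i(\mathfrak a,\mathfrak b)=\lim_{n\to\infty}\exp\bigl(-2\gromov{y_n}{z_n}{x_0}\bigr).
\]
The key observation, used throughout, is that $i(\mathfrak a,\mathfrak b)=0$ exactly when $\gromov{y_n}{z_n}{x_0}\to+\infty$. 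Both assertions thereby reduce to the fact that the relevant distortion cannot destroy this divergence.

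For the first assertion I apply the bounded distortion hypothesis to the triple $(y_n,z_n,x_0)$, which bounds $\gromov{\omega(y_n)}{\omega(z_n)}{\omega(x_0)}$ linearly in $\gromov{y_n}{z_n}{x_0}$. The one gap is that this product is based at $\omega(x_0)$; I will close it with the elementary estimate $|\gromov{a}{b}{x_0}-\gromov{a}{b}{\omega(x_0)}|\le d_T(x_0,\omega(x_0))$, immediate from the triangle inequality. Absorbing the constant $d_T(x_0,\omega(x_0))$ into the additive term ($D_2'=D_2+d_T(x_0,\omega(x_0))$), I obtain
\[
\tfrac{1}{D_1}\gromov{y_n}{z_n}{x_0}-D_2'\le \gromov{\omega(y_n)}{\omega(z_n)}{x_0}\le D_1\,\gromov{y_n}{z_n}{x_0}+D_2'.
\]
Since $D_1>0$ and both bounds are linear, the middle quantity diverges precisely when $\gromov{y_n}{z_n}{x_0}$ does. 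As $h_\omega(\mathfrak a)=\Psi_{x_0}(\omega(p))$ and $\omega(y_n)\to\omega(p)$ by continuity of the extension, the same limit formula computes $i(h_\omega(\mathfrak a),h_\omega(\mathfrak b))=\lim\exp(-2\gromov{\omega(y_n)}{\omega(z_n)}{x_0})$, and the displayed inequalities yield the claimed equivalence.

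For the ``Furthermore'' part the additive nature of the quasi-inverse bound makes the estimate softer. Let $\mathfrak b\in\mathcal N(\mathfrak a)$; by Proposition~\ref{prop:translation_null_space1}(2) we have $\mathfrak b\in\Gmbdy$, so after the same reduction $\gromov{y_n}{z_n}{x_0}\to\infty$. Setting $z_n'=\omega'\circ\omega(z_n)$, the quasi-inverse condition gives $d_T(z_n,z_n')\le D_3$, and moving one argument of the Gromov product a bounded distance changes its value by at most $d_T(z_n,z_n')\le D_3$; hence $\gromov{y_n}{z_n'}{x_0}\to\infty$ as well. Because $z_n'\to\overline{\omega'\omega}(q)$ (the extension of $\omega'\circ\omega$ being the composition of the extensions) and $h_{\omega'}\circ h_\omega(\mathfrak b)=\Psi_{x_0}(\overline{\omega'\omega}(q))$, passing to the limit gives
\[
i\bigl(\mathfrak a,\;h_{\omega'}\circ h_\omega(\mathfrak b)\bigr)=\lim_{n\to\infty}\exp\bigl(-2\gromov{y_n}{z_n'}{x_0}\bigr)=0,
\]
which is exactly \eqref{eq:translation_null_space2}.

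The routine points—the scale reductions via homogeneity, the identifications $h_\omega(\Psi_{x_0}(p))=\Psi_{x_0}(\omega(p))$ through the model map, and the interchange of limits justified by continuity of $i$ and of $\Psi_{x_0}$—I will dispatch briefly. The step that genuinely requires care is the basepoint shift in the first assertion: the bounded distortion inequality is stated with a moving third vertex, so one must check that replacing the basepoint $\omega(x_0)$ by $x_0$ costs only the fixed constant $d_T(x_0,\omega(x_0))$. This is the sole place this constant enters, and it is precisely what permits the whole argument to be run with the single fixed basepoint $x_0$.
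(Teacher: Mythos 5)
Your proof is correct, and its skeleton is the same as the paper's: both arguments rest on the bounded-distortion hypothesis applied with third vertex $x_0$, the identity $i(\Psi_{x_0}(y),\Psi_{x_0}(z))=\exp(-2\gromov{y}{z}{x_0})$ from Theorem~\ref{thm:pairing_intrinsic}~(4), and the continuity of $i$ together with the continuous extensions of $\omega$ and $\omega'$ to pass to boundary points; your treatment of the quasi-inverse assertion (the additive $D_3$ estimate on the Gromov product, then a limit) is essentially identical to the paper's. The one genuine difference is how the moving basepoint $\omega(x_0)$ is handled. The paper expands $\gromov{\omega(y)}{\omega(z)}{\omega(x_0)}$ via an exact identity into Gromov products based at $x_0$, which yields the multiplicative two-sided inequality \eqref{eq:intersection_mcg_1} with the factor $J_{x_0}(y,z)=e^{2d_T(x_0,\omega(x_0))}i_{x_0}(\omega(x_0),\omega(y))\,i_{x_0}(\omega(x_0),\omega(z))$; after letting $y\to\zeta$, $z\to\eta$ in \eqref{eq:intersection_mcg_2} it must then verify $J_{x_0}(\zeta,\eta)\neq 0$, which uses $\omega(x_0)\in\Teich{g}{m}$ and the non-triviality of extremal length (Lemma~\ref{lem:extremal_nontrivial}). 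You instead invoke the elementary uniform bound $|\gromov{a}{b}{x_0}-\gromov{a}{b}{\omega(x_0)}|\le d_T(x_0,\omega(x_0))$ and absorb the discrepancy into the additive constant $D_2'$, which keeps the whole argument at the level of divergence of Gromov products along approximating sequences and dispenses with the factor $J_{x_0}$ and its non-vanishing check altogether. Both routes are sound; yours is slightly more economical at this point, while the paper's version records a quantitative comparison of $i_{x_0}(\omega(\zeta),\omega(\eta))$ with $i_{x_0}(\zeta,\eta)^{D_1}$ and $i_{x_0}(\zeta,\eta)^{1/D_1}$ that is stronger than the bare vanishing equivalence.
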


\begin{proof}
Let $D_1$ and $D_2$ be the distortion constants of $\omega$.
%(2)\quad
%If $\mathfrak{a}\in \GmTeich$,
%from Lemma \ref{lem:extremal_nontrivial} and
%(1) of Theorem \ref{thm:pairing_intrinsic},
%$\mathcal{N}(\mathfrak{a})=\emptyset$.
%By definition,
%$h_\omega(\mathfrak{a})\in \GmTeich$ if and only if
%$\mathfrak{a}\in \GmTeich$.
%This means %that \eqref{eq:translation_null_space1}
%the desired inequality holds for $\mathfrak{a}\in \GmTeich$.
%
%We check the remaining case.
%We may assume that $\mathfrak{a}\in \Gmbdy$ from Proposition
%\ref{prop:translation_null_space1}.
A formal calculation yields
\begin{align*}
2\gromov{\omega(y)}{\omega(z)}{\omega(x_0)}
&=
2\gromov{\omega(y)}{\omega(z)}{x_0}
-
2\gromov{\omega(x_{0})}{\omega(y)}{x_0} \\
&
\qquad
-2\gromov{\omega(x_{0})}{\omega(z)}{x_0}
-
2d_{T}(x_{0},\omega(x_{0}))
% \\
%
%d_T(\omega(x_0),\omega(y))+d_T(\omega(x_0),\omega(z))-
%d_T(\omega(y),\omega(z)) \\
%&=
%d_T(x_0,\omega(y))+d_T(x_0,\omega(z))-
%d_T(\omega(y),\omega(z)) \\
%&
%\quad+(d_T(\omega(x_0),\omega(y))-d_T(x_0,\omega(y))) \\
%&
%\qquad+
%(d_T(\omega(x_0),\omega(z))-d_T(x_0,\omega(z))) \\
%&=
%2\gromov{\omega(y)}{\omega(z)}{x_0}
%-2\gromov{\omega(x_0)}{\omega(y)}{x_0} \\
%&\quad
%-2\gromov{\omega(x_0)}{\omega(z)}{x_0}
%+2d_T(x_0,\omega(x_0))
\end{align*}
for every $x,y\in \TeichbX$.
Since $\omega$ is a mapping of bounded distortion for triangles
with constant $D_1,D_2>0$,
%$$
%|2\langle \omega(y),\omega(z)\rangle_{\omega(x_0)}-2\langle y,z\rangle_{x_0}|
%\le 3D.
%$$
%This means that
%$$
%\frac{1}{D_1}\gromov{x}{y}{x_0}-D_2
%\le
%\gromov{\omega(x)}{\omega(y)}{\omega(x_0)}
%\le
%D_1
%\gromov{y}{z}{x_0}+D_2.
%$$
%Therefore 
we conclude that
\begin{equation} \label{eq:intersection_mcg_1}
e^{-2D_2}J_{x_0}(y,z)\,i_{x_0}(y,z)^{D_1}
\le i_{x_0}(\omega(y),\omega(z))
\le
e^{2D_2}J_{x_0}(y,z)\,i_{x_0}(y,z)^{\frac{1}{D_1}},
\end{equation}
where
$$
J_{x_0}(y,z)=
e^{2d_T(x_0,\omega(x_0))}
i_{x_0}(\omega(x_0),\omega(y))\,
i_{x_0}(\omega(x_0),\omega(z)).
$$

Let $\zeta,\eta\in \partialGM{\Teich{g}{m}}$.
Since $\omega$ has a continuous extension to $\cl{\Teich{g}{m}}$,
by letting $y\to \zeta$ and $z\to \eta$ in \eqref{eq:intersection_mcg_1},
we get
%\begin{align} 
%i_{x_0}(\omega(\zeta),\omega(\eta))
%&=
%\frac{i_{x_0}(\omega(x_0),\omega(\zeta))\,
%i_{x_0}(\omega(x_0),\omega(\eta))}
%{e^{2d_T(x_0,\omega(x_0))}}\,i_{x_0}(\zeta,\eta) \nonumber \\
%&=
%\frac{\extB^{x_0}_{\omega (x_0)}(\omega(\zeta))^{1/2}\,
%\extB^{x_0}_{\omega (x_0)}(\omega(\eta))^{1/2}}
%{e^{4d_T(x_0,H_\omega(x_0))}}\,i_{x_0}(\zeta,\eta)
%\label{eq:intersection_mcg_2}
%\end{align}
\begin{equation}
\label{eq:intersection_mcg_2}
e^{-2D_2}J_{x_0}(\zeta,\eta)\,i_{x_0}(\zeta,\eta)^{D_1}
\le i_{x_0}(\omega(\zeta),\omega(\eta))
\le
e^{2D_2}J_{x_0}(\zeta,\eta)\,i_{x_0}(\zeta,\eta)^{\frac{1}{D_1}}
\end{equation}
%from Proposition \ref{prop:pairing}.
%and the assumption of Theorem \ref{thm:almost_isometry}.
%From (1) of Proposition \ref{prop:pairing} again,
%\ref{prop:translation_null_space1},
%Lemma \ref{lem:extremal_nontrivial},
%e^{-d_T(x_0,y)}\extB^{x_0}_y(p)^{1/2},{prop:pairing}
from Proposition \ref{prop:pairing},
where
\begin{align*}
J_{x_0}(\zeta,\eta)
&=
\lim_{y\to \zeta,z\to \eta}
e^{2d_T(x_0,\omega(x_0))}
i_{x_0}(\omega(x_0),\omega(y))\,
i_{x_0}(\omega(x_0),\omega(z)) \\
&=
e^{2d_T(x_0,\omega(x_0))}
\extB^{x_0}_{\omega(x_0)}(\omega(\zeta))^{1/2}
\extB^{x_0}_{\omega(x_0)}(\omega(\eta))^{1/2}
\ne 0
\end{align*}
since $\omega(x_0)\in \Teich{g}{m}$ (cf. Lemma \ref{lem:extremal_nontrivial}).
Therefore,
\eqref{eq:intersection_mcg_2} implies that
$i_{x_0}(\omega(\zeta),\omega(\eta))=0$
if and only if $i_{x_0}(\zeta,\eta)=0$
for $\zeta,\eta\in \partialGM{\Teich{g}{m}}$.

%Notice from (1) above that
%$\mathcal{N}(\mathfrak{a})\subset \Gmbdy$.
Let $\mathfrak{a},\mathfrak{b}\in \Gmbdy$.
Take $\zeta,\eta\in \partialGM{\Teich{g}{m}}$ and $t,s\ge 0$ with
$\mathfrak{a}=\tilde{\Psi}_{x_0}(t\zeta)$
and $\mathfrak{b}=\tilde{\Psi}_{x_0}(s\eta)$.
Then,
by \eqref{eq:intersection_number_intrinsic},
\begin{align*}
i(\mathfrak{a},\mathfrak{b})
&=i_{x_0}(t\zeta,s\eta)
=ts\,
i_{x_0}(\zeta,\eta) \\
i(h_\omega(\mathfrak{a}),h_\omega(\mathfrak{b}))
&=i_{x_0}(H_\omega(t\zeta),H_\omega(s\eta))
=ts\,
i_{x_0}(\omega(\zeta),\omega(\eta)).
\end{align*}
Therefore,
$i(\mathfrak{a},\mathfrak{b})=0$
if and only if 
$i(h_\omega(\mathfrak{a}),h_\omega(\mathfrak{b}))=0$.

%$$
%i(h_\omega(\mathfrak{a}),h_\omega(\mathfrak{b}))
%=i_{x_0}(H_\omega(t\zeta),H_\omega(s\zeta))
%=ts\,
%i_{x_0}(\omega(\zeta),\omega(\eta)).
%$$
%Suppose $\mathfrak{b}\in \mathcal{N}(\mathfrak{a})$.
%Then,
% $i(\mathfrak{a},\mathfrak{b})=0$
% and $i_{x_0}(\zeta,\eta)=0$.
%Therefore,
%$i_{x_0}(\omega(\zeta),\omega(\eta))=0$
%and $i(h_\omega(\mathfrak{a}),h_\omega(\mathfrak{b}))=0$.
%This means that $h_\omega(\mathfrak{b})\in \mathcal{N}(h_\omega(\mathfrak{a}))$
%and
%\begin{equation}
%\label{eq:h_omega_contained_N_h_omega1}
%h_\omega(\mathcal{N}(\mathfrak{a}))\subset \mathcal{N}(h_\omega(\mathfrak{a})).
%\end{equation}
%Let $\mathfrak{b}\in h_\omega^{-1}(\mathfrak{a})$
%and
%suppose $\mathfrak{c}\in h_\omega^{-1}(\mathcal{N}(\mathfrak{a}))$.
%Then,
%$i(h_\omega(\mathfrak{b}),h_\omega(\mathfrak{c}))=0$
%and hence $i_{x_0}(\omega(\zeta),\omega(\eta))=0$.
%This means that $i_{x_0}(\zeta,\eta)=0$
%and $i(\mathfrak{b},\mathfrak{c})=0$.
%This means that $\mathfrak{c}\in \mathcal{N}(\mathfrak{b})$.
%Therefore,
%$$
%h_\omega^{-1}(\mathcal{N}(\mathfrak{a}))
%\subset \mathcal{N}(\mathfrak{b})
%$$
%for all $\mathfrak{a}\in h_\omega^{-1}(\mathfrak{b})$.
%In particular,
%$$
%\label{eq:h_omega_contained_N_h_omega3}
%\{0\}\ne h_\omega^{-1}(\mathcal{N}(\mathfrak{a}))\subset
%\mathcal{N}(\mathfrak{b}_1)\cap \mathcal{N}(\mathfrak{b}_2)
%$$
%for $\mathfrak{a}\in \Gmbdy$.
%
%Let $\mathfrak{b}\in \mathcal{N}(h_\omega(\mathfrak{a}))$.
%$i(h_\omega(\mathfrak{a}),\mathfrak{b})=0$ implies
% $i(\mathfrak{a},\mathfrak{b}')=0$ for all $\mathfrak{b}'\in h_\omega^{-1}(\mathfrak{b})$.

Suppose $\omega$ has a quasi-inverse $\omega'$ of
quasi-inverse constant $D_3$
which extends continuously to $\cl{\Teich{g}{m}}$.
Then,
$$
2\gromov{y}{z}{x_0}-2D_3
\le
2\gromov{y}{\omega'\circ \omega(z)}{x_0}
\le 
2\gromov{y}{z}{x_0}+2D_3
$$
and
$$
e^{-2D_3}i_{x_0}(y,z)\le
i_{x_0}(y,\omega'\circ \omega(z))
\le
e^{2D_3}i_{x_0}(y,z).
$$
Therefore,
by letting $y\to \zeta$ and $z\to \eta$,
we have
$$
e^{-2D_3}i_{x_0}(\zeta,\eta)\le
i_{x_0}(\zeta,\omega'\circ \omega(\eta))
\le
e^{2D_3}i_{x_0}(\zeta,\eta)
$$
for all $\zeta,\eta\in \partialGM{\Teich{g}{m}}$,
which implies
$$
e^{-2D_3}i(\mathfrak{a},\mathfrak{b})\le
i(\mathfrak{a},h_{\omega'}\circ h_\omega(\mathfrak{b}))
\le
e^{2D_3}i(\mathfrak{a},\mathfrak{b})
$$
for $\mathfrak{a},\mathfrak{b}\in \Gmbdy$.
Let $\mathfrak{b}\in h_{\omega'}\circ h_{\omega}(\mathcal{N}(\mathfrak{a}))$.
Take $\mathfrak{c}\in \mathcal{N}(\mathfrak{a})$
with $\mathfrak{b}=h_{\omega'}\circ h_{\omega}(\mathfrak{c})$.
Since
$$
i(\mathfrak{a},\mathfrak{b})
=i(\mathfrak{a},h_{\omega'}\circ h_{\omega}(\mathfrak{c}))
\le e^{2D_3}i(\mathfrak{a},\mathfrak{c})=0,
$$
we have
$\mathfrak{b}\in \mathcal{N}(\mathfrak{a})$.
%Thus,
%we obtain
%\begin{equation}
%\label{eq:h_omega_contained_N_h_omega2}
%h_{\omega'}\circ h_{\omega}(\mathcal{N}(\mathfrak{a}))
%\subset
%\mathcal{N}(\mathfrak{a}),
%\end{equation}
%and we are done.
%From \eqref{eq:h_omega_contained_N_h_omega1}
%and \eqref{eq:h_omega_contained_N_h_omega2},
%we have
%$$
%h_{\omega'}\circ h_{\omega}(\mathcal{N}(\mathfrak{a}))
%\subset
%\mathcal{N}(\mathfrak{a})\cap h_{\omega'}(\mathcal{N}(h_{\omega}(\mathfrak{a}))).
%$$
%Since $h_{\omega'}\circ h_{\omega}(\mathfrak{a})\in
%h_{\omega'}\circ h_{\omega}(\mathcal{N}(\mathfrak{a}))$
%for $\mathfrak{a}\in \Gmbdy$,
%$\mathcal{N}(\mathfrak{a})\cap
%h_{\omega'}(\mathcal{N}(h_{\omega}(\mathfrak{a})))$
%is not empty.
%By the same argument,
%we also
%conclude that $h_\omega(\mathfrak{b})\in h_\omega(\mathcal{N}(\mathfrak{a}))$
%implies $\mathfrak{b}\in \mathcal{N}(\mathfrak{a})$.
\qed
\end{proof}

\subsection{$\omega$ preserves $\pmf$}
\label{subsec:omega_preserves_pmf}
This section is devoted to showing (1) in Theorem \ref{thm:almost_isometry}.
Namely,
we prove the following.

\begin{proposition}[$\omega$ preserves $\pmf$]
\label{prop:OMEGApreservePMF}
Let $\omega\colon \Teich{g}{m}\to \Teich{g}{m}$ be a mapping of bounded distortion for triangles
with continuous extension to $\cl{\Teich{g}{m}}$.
Suppose that $\omega$ has a quasi-inverse $\omega'$ which also extends
continuously to $\cl{\Teich{g}{m}}$.
Then,
the restriction of $\omega$ to $\pmf$ is a self-homeomorphism
of $\pmf$.
Furthermore,
$\omega'=\omega^{-1}$ on $\pmf$.
\end{proposition}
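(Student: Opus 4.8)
The plan is to reduce the statement to two claims, to drive both through the null space calculus set up in Proposition \ref{prop:translation_null_space2}, and to promote everything from a dense subset of $\pmf$ by continuity. First I would record that $\omega$ and $\omega'$ each send $\partialGM{\Teich{g}{m}}$ into itself: being quasi-invertible mappings of bounded distortion for triangles they are quasi-isometries of $(\Teich{g}{m},d_T)$, hence carry sequences tending to infinity to sequences tending to infinity, and the assumed continuous extensions therefore restrict to continuous self-maps of $\partialGM{\Teich{g}{m}}$. The two things to prove are then: (i) $\omega(\pmf)\subset\pmf$ (and, by the symmetric hypotheses on $\omega'$, also $\omega'(\pmf)\subset\pmf$); and (ii) $\omega'\circ\omega=\omega\circ\omega'=\mathrm{id}$ on $\pmf$. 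Granting these, $\omega|_{\pmf}$ is a continuous bijection of the compact space $\pmf$ with continuous two-sided inverse $\omega'|_{\pmf}$, hence a self-homeomorphism with $\omega'=\omega^{-1}$.

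The engine for (i) is that $h_\omega$ respects the relation $i(\cdot,\cdot)=0$ on $\Gmbdy$. Indeed the first assertion of Proposition \ref{prop:translation_null_space2} says precisely that, for $\mathfrak{a},\mathfrak{b}\in\Gmbdy$, one has $i(h_\omega(\mathfrak{a}),h_\omega(\mathfrak{b}))=0$ if and only if $i(\mathfrak{a},\mathfrak{b})=0$; since $\mathcal{N}(\mathfrak{c})\subset\Gmbdy$ for every $\mathfrak{c}$ by Proposition \ref{prop:translation_null_space1}(2), $h_\omega$ carries $\mathcal{N}(\mathfrak{a})$ onto $\mathcal{N}(h_\omega(\mathfrak{a}))$ (running the equivalence in both directions, with the surjectivity on the boundary supplied by $\omega'$). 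Consequently $h_\omega$ preserves the iterated null space $\mathcal{N}(\mathcal{N}(\cdot))$, and being homogeneous it takes rays to rays. My plan is therefore to single out $\pmf$ — or rather a dense subset of it — by a property phrased purely through $\mathcal{N}$, which $h_\omega$ will then automatically preserve.

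The characterization I would use is \emph{bipolar minimality}: call $\mathfrak{a}\in\Gmbdy-\{0\}$ minimal if $\mathcal{N}(\mathcal{N}(\mathfrak{a}))=\overline{\mathbb{R}_{\ge0}\,\mathfrak{a}}$. One always has $\mathfrak{a}\in\mathcal{N}(\mathcal{N}(\mathfrak{a}))$ by symmetry of $i$, so minimality asks that the ray through $\mathfrak{a}$ exhaust the double null space. I claim the minimal points are precisely the nonzero filling, uniquely ergodic measured foliations $F\in\mf\subset\Gmbdy$, whose projective classes form a dense subset $\mathcal{U}\subset\pmf$. The key input is a rigidity statement for the extended intersection number: if $F$ is filling and uniquely ergodic and $\mathfrak{c}\in\Gmbdy$ satisfies $i(\mathfrak{c},F)=0$, then $\mathfrak{c}\in\overline{\mathbb{R}_{\ge0}\,F}$; granting this, $\mathcal{N}(F)=\overline{\mathbb{R}_{\ge0}\,F}$ and hence $\mathcal{N}(\mathcal{N}(F))=\overline{\mathbb{R}_{\ge0}\,F}$, whereas for a non-filling or non-uniquely-ergodic $F$ one exhibits a second ray inside $\mathcal{N}(\mathcal{N}(F))$ and minimality fails. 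Because minimality is $\mathcal{N}$-intrinsic and $h_\omega$ intertwines $\mathcal{N}$ and preserves rays, $h_\omega$ preserves the set of minimal points; thus $\omega(\mathcal{U})\subset\mathcal{U}$, and by continuity with $\overline{\mathcal{U}}=\pmf$ and compactness of $\pmf$ we get $\omega(\pmf)\subset\pmf$, proving (i). \emph{This rigidity statement — that intersection number zero against a filling uniquely ergodic foliation forces proportionality even for boundary points of $\GmInv$ — is the main obstacle}, since it is exactly where the coarser Gardiner--Masur boundary (strictly larger than $\pmf$ once $\dim_{\mathbb{C}}\Teich{g}{m}\ge2$) must be shown not to interfere.

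For (ii) I would exploit the second assertion of Proposition \ref{prop:translation_null_space2}, namely $h_{\omega'}\circ h_\omega(\mathcal{N}(\mathfrak{a}))\subset\mathcal{N}(\mathfrak{a})$ for $\mathfrak{a}\in\Gmbdy$. Writing $\phi=h_{\omega'}\circ h_\omega$, fix $\mathfrak{b}\in\Gmbdy$; for any $\mathfrak{a}\in\mathcal{N}(\mathfrak{b})$ one has $\mathfrak{b}\in\mathcal{N}(\mathfrak{a})$, whence $\phi(\mathfrak{b})\in\phi(\mathcal{N}(\mathfrak{a}))\subset\mathcal{N}(\mathfrak{a})$, i.e. $i(\mathfrak{a},\phi(\mathfrak{b}))=0$; letting $\mathfrak{a}$ range over $\mathcal{N}(\mathfrak{b})$ this yields $\phi(\mathfrak{b})\in\mathcal{N}(\mathcal{N}(\mathfrak{b}))$ for every $\mathfrak{b}\in\Gmbdy$. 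Restricting to minimal $\mathfrak{b}$, where $\mathcal{N}(\mathcal{N}(\mathfrak{b}))=\overline{\mathbb{R}_{\ge0}\,\mathfrak{b}}$, gives $\phi(\mathfrak{b})\in\overline{\mathbb{R}_{\ge0}\,\mathfrak{b}}$, so $\omega'\circ\omega$ fixes every point of $\mathcal{U}$ projectively. Since $\mathcal{U}$ is dense in $\pmf$ and $\omega'\circ\omega$ is continuous, $\omega'\circ\omega=\mathrm{id}$ on $\pmf$; exchanging the roles of $\omega$ and $\omega'$ gives $\omega\circ\omega'=\mathrm{id}$ on $\pmf$. This completes both claims. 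The only genuinely new difficulty beyond the formal null space calculus is the filling/uniquely-ergodic rigidity flagged above; the remaining steps are continuity-and-density promotions together with the symmetric reuse of the hypotheses for $\omega'$.
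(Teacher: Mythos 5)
Your step (ii) is essentially sound and is in fact the paper's own first move: using only the inclusion $h_{\omega'}\circ h_\omega(\mathcal{N}(\mathfrak{a}))\subset\mathcal{N}(\mathfrak{a})$ of Proposition \ref{prop:translation_null_space2}, one gets that $h_{\omega'}\circ h_\omega(G)$ lies in $\mathcal{N}(G)$ for uniquely ergodic $G$, and the rigidity you flag as ``the main obstacle'' is not an obstacle at all --- it is precisely Lemma \ref{lem:charcterization_ue} (Theorem 3 of \cite{Mi3}) combined with Lemma \ref{lem:unuquely_ergodic_points}: for uniquely ergodic $G$ (the paper's definition of unique ergodicity already encodes filling) one has $\mathcal{N}(G)=\{tG\mid t\ge 0\}$, so in particular your ``minimality'' $\mathcal{N}(\mathcal{N}(G))=\{tG\mid t\ge 0\}$ holds, $\omega'\circ\omega$ fixes $[G]$ projectively, and density of uniquely ergodic classes plus continuity give $\omega'\circ\omega=\omega\circ\omega'=\mathrm{id}$ on $\pmf$.

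The genuine gap is in your step (i). Your argument needs $h_\omega$ to carry $\mathcal{N}(\mathfrak{a})$ \emph{onto} $\mathcal{N}(h_\omega(\mathfrak{a}))$, and the parenthetical ``surjectivity on the boundary supplied by $\omega'$'' is circular at that stage: Proposition \ref{prop:translation_null_space2} controls intersection numbers only for pairs lying in the image of $h_\omega$, so to pull back a point $\mathfrak{c}\in\mathcal{N}(h_\omega(\mathfrak{a}))$ you would already need $h_\omega\circ h_{\omega'}=\mathrm{id}$ on that null space, which is part of what is being proved; moreover, even after (ii) is established, the quasi-inverse only yields $\pmf=\omega\circ\omega'(\pmf)\subset\omega(\partialGM{\TeichbX})$, i.e.\ surjectivity of $h_\omega$ onto $\mf$, never onto all of $\Gmbdy$, whereas $\mathcal{N}(h_\omega(\mathfrak{a}))$ may contain boundary points outside $\mf$. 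Without ontoness, minimality of $\mathfrak{a}$ gives no control on $\mathcal{N}(\mathcal{N}(h_\omega(\mathfrak{a})))$, so ``$h_\omega$ preserves minimal points'' does not follow. A second hole of the same kind: your classification ``minimal $\Leftrightarrow$ filling uniquely ergodic'' is argued only for elements of $\mf$, but a minimal point could a priori be an exotic point of $\Gmbdy\setminus\mf$, which is exactly the phenomenon that must be excluded before concluding $\mathcal{U}\subset\pmf$. The repair is the paper's ordering: prove (ii) first; deduce $\mf\subset h_\omega(\Gmbdy)$; then, for uniquely ergodic $G$, take a single nonzero $F\in\mathcal{N}(h_\omega(G))\cap\mf$ (Proposition \ref{prop:translation_null_space1}(3)), write $F=h_\omega(\mathfrak{a})$ with $\mathfrak{a}\in\Gmbdy$, deduce $i(\mathfrak{a},G)=0$ from Proposition \ref{prop:translation_null_space2}, hence $\mathfrak{a}\in\{tG\mid t\ge 0\}$ by Lemma \ref{lem:unuquely_ergodic_points}, so that $h_\omega(G)$ is a positive multiple of $F$ and lies in $\mf$; density of uniquely ergodic points then gives $\omega(\pmf)\subset\pmf$, and your concluding step (d) finishes the proof exactly as in the paper.
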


The proof of Proposition \ref{prop:OMEGApreservePMF}
is given in
\S\ref{subsubsec:Proof_of_homeomorphism}.
Before showing Proposition \ref{prop:OMEGApreservePMF},
we deal with uniquely ergodic measured foliations as elements in $\GmInv$
in the next section.

\subsubsection{Uniquely ergodic measured foliations}
\label{subsec:ue_in_GmInv}
In this paper,
$G\in \mf-\{0\}$ is said to be \emph{uniquely ergodic}
if 
every $F\in (\mathcal{N}(G)-\{0\})\cap \mf$ is projectively equivalent to $G$.
It is known that the set of uniquely ergodic measured foliations 
is dense in $\mf$
%In fact,
%such measured foliations 
%consists of
%a full-measure set in $\mf$
(cf. \cite{FLP}. See also \cite{Masur2} and \cite{Veech}).

In the Gardiner-Masur boundary,
simple closed curves and
uniquely ergodic measured foliations are rigid
in the following sense.

\begin{lemma}[Theorem 3 of \cite{Mi3}]
\label{lem:charcterization_ue}
Let $p\in \cl{\TeichbX}$.
Let $G\in \mf$ be a simple closed curve or a uniquely ergodic measured foliation.
Suppose that $\mathcal{E}_p(F)=0$
for all $F\in \mathcal{N}(G)\cap \mf$.
Then there is $t>0$ such that
$$
\mathcal{E}_p(F)=t\,i(F,G)
$$
for all $F\in \mf$.
Namely, $p=[G]$ as points in $\cl{\TeichbX}$.
\end{lemma}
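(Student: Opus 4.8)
The plan is to translate the statement into the intersection-number language supplied by Theorem \ref{thm:pairing_intrinsic} and then exploit the rigidity of $G$ among measured foliations. First I would reduce to the boundary: since $I(G,G)=0$ we have $G\in\mathcal{N}(G)\cap\mf$ with $G\ne 0$, so the hypothesis forces $\mathcal{E}_p(G)=0$; by Proposition \ref{prop:zero-boundary} this rules out $p\in\TeichbX$, whence $p\in\partialGM{\TeichbX}$ and we may set $\mathfrak{a}=\tilde{\Psi}_{x_0}(p)\in\Gmbdy-\{0\}$. Using property (1) of Theorem \ref{thm:pairing_intrinsic} together with \eqref{eq:E-p-For-measured-foliation}, we have $\mathcal{E}_p(F)=i(\mathfrak{a},F)$ for all $F\in\mf$, so the hypothesis reads $\mathcal{N}(G)\cap\mf\subset\mathcal{N}(\mathfrak{a})$ and the goal becomes $i(\mathfrak{a},\cdot)=t\,I(\cdot,G)$ on $\mf$, i.e. that $\mathfrak{a}$ is projectively $G$.

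Next I would realize $\mathfrak{a}$ dynamically. Choosing $y_n\to p$, the Teichm\"uller geodesics from $x_0$ to $y_n$ carry vertical foliations whose extremal lengths decay; after normalizing in $\mf_1$ and passing to a subsequence, these converge to some $V\in\mf$. The main structural input I would establish (reconstructing the argument of \cite{Mi3}) is that the null behaviour of $\mathcal{E}_p$ is governed by this limiting vertical foliation, namely that $\mathcal{E}_p(F)=0$ forces $I(F,V)=0$, so that the zero set of $\mathcal{E}_p$ on $\mf$ is contained in $\mathcal{N}(V)\cap\mf$. Granting this, the hypothesis $\mathcal{E}_p(G)=0$ yields $I(G,V)=0$; unique ergodicity of $G$ then gives $V=cG$ projectively. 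In the simple-closed-curve case the same conclusion follows differently: the hypothesis gives $\mathcal{N}(\gamma)\cap\mf\subset\mathcal{N}(V)$, and the only measured foliation disjoint from every foliation disjoint from $\gamma$ is a multiple of $\gamma$, so again $V=c\gamma$.

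With the vertical direction identified as $G$, I would finish by feeding the decay $\ext_{\gamma(t)}(\mathfrak{a})\to 0$ along the $G$-ray $\gamma(t)$ into the intrinsic representation \eqref{eq:extremal_representation_intrinsic}, where the supremum concentrates on foliations near $G$. Continuity of $i(\mathfrak{a},\cdot)$ at $G$ together with Minsky's inequality and its sharpness (Corollary \ref{coro:minsky_inequality_extention}) should then squeeze $i(\mathfrak{a},F)$ between constant multiples of $I(F,G)$, giving the desired proportionality $i(\mathfrak{a},\cdot)=t\,I(\cdot,G)$ and hence $p=[G]$. Equivalently, one invokes the computation of the Gardiner--Masur limit of a Teichm\"uller ray whose vertical foliation is uniquely ergodic (or a simple closed curve), for which the limit is exactly $[G]$ and the limiting boundary function is proportional to $I(\cdot,G)$.

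The hard part will be the rigidity step, namely ruling out \emph{exotic} boundary points: a priori $\mathcal{N}(G)$ could contain points of $\partialGM{\TeichbX}$ that are not measured foliations, and for uniquely ergodic $G$ the hypothesis supplies only the single equation $i(\mathfrak{a},G)=0$, so the conclusion $\mathfrak{a}=[G]$ is far stronger than bare continuity delivers. Overcoming this requires the precise asymptotics of $\mathcal{E}_p$ along Teichm\"uller rays from \cite{Mi3}---in particular that unique ergodicity prevents the vertical data of $\mathfrak{a}$ from splitting off any component other than $G$---and this is exactly where the full strength of unique ergodicity (respectively the disjointness structure of $\mathcal{N}(\gamma)$ for a simple closed curve) is indispensable.
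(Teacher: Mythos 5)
First, a framing point: the paper itself contains no proof of this lemma; it is imported verbatim as Theorem 3 of \cite{Mi3}, and nothing in the present paper re-derives it. So your proposal must stand on its own. Its first two stages are sound and do match the architecture of Miyachi's actual argument. The reduction to $p\in\partialGM{\TeichbX}$ via Proposition \ref{prop:zero-boundary} is correct. Your ``structural input'' is indeed provable: writing $y_n$ as the endpoint of the geodesic segment from $x_0$ of length $t_n=d_T(x_0,y_n)$, with initial vertical foliation $V_n$ normalized by $\ext_{x_0}(V_n)=1$, one has $\ext_{y_n}(V_n)=e^{-2t_n}$, so Minsky's inequality \eqref{eq:minsky_inequality} gives $\mathcal{E}_{y_n}(F)\ge I(F,V_n)$, and hence $\mathcal{E}_p(F)\ge I(F,V)$ for any subsequential limit $V\in\mf_1$ of $\{V_n\}$. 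The identification of $V$ with $G$ up to scale is also correct in both cases (for a simple closed curve, your ``double orthogonal'' argument works: a foliation null against everything disjoint from $\gamma$ is a multiple of $\gamma$).

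The gap is the last step, and it is the heart of the lemma. At that point you know: (a) the zero set of $\mathcal{E}_p$ on $\mf$ contains $\mathcal{N}(G)\cap\mf$, and (b) $\mathcal{E}_p\ge c\,I(\cdot,G)$. This information is logically insufficient to force $\mathcal{E}_p=t\,I(\cdot,G)$, so no squeeze using continuity, Minsky's inequality, or Corollary \ref{coro:minsky_inequality_extention} can close the argument: by \cite{Mi1}, the Jenkins--Strebel ray with vertical foliation $V=a_1\gamma_1+a_2\gamma_2$ ($a_1\ne a_2$) converges in the Gardiner--Masur boundary to the point whose function is $F\mapsto\bigl(a_1^2I(F,\gamma_1)^2+a_2^2I(F,\gamma_2)^2\bigr)^{1/2}$; this function vanishes exactly on $\mathcal{N}(V)\cap\mf$ and dominates $\tfrac{1}{\sqrt{2}}\,I(\cdot,V)$, yet is not proportional to $I(\cdot,V)$. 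Hence properties (a) and (b) admit non-proportional examples, and any correct proof must use unique ergodicity (resp.\ connectedness of the curve) in a finer way than merely identifying the limiting vertical direction $V$. Two further problems with the proposed finish: even a two-sided bound $c_1I(F,G)\le\mathcal{E}_p(F)\le c_2I(F,G)$ would not yield the exact proportionality needed for $p=[G]$; and the ingredient $\ext_{\gamma(t)}(\mathfrak{a})\to 0$ along the $G$-ray is circular, since for a boundary point $\mathfrak{a}$ known only to satisfy $i(\mathfrak{a},G)=0$ that decay is essentially equivalent to the rigidity being proved. You acknowledge this by deferring ``the precise asymptotics'' to \cite{Mi3} --- but that deferral is precisely the theorem being quoted, so the proposal reconstructs the setup of the cited proof without supplying its decisive step.
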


We give a characterization
of uniquely ergodic measured foliations
as follows.

\begin{lemma}[Uniquely ergodic points]
\label{lem:unuquely_ergodic_points}
The following four conditions are equivalent
for $\mathfrak{a}\in \GmInv-\{0\}$:
\begin{itemize}
\item[{\rm (i)}]
There exists $\mathfrak{b}\in \GmInv-\{0\}$ such that
$\mathcal{N}(\mathfrak{a})=\{t\mathfrak{b}\mid t\ge 0\}$.
\item[{\rm (ii)}]
$\mathcal{N}(\mathfrak{a})=\{t\mathfrak{a}\mid t\ge 0\}$.
\item[{\rm (iii)}]
$\mathfrak{a}\in \mf$ and $\mathfrak{a}$ is uniquely ergodic.
\item[{\rm (iv)}]
$\mathcal{N}(\mathfrak{a})$ contains a uniquely ergodic measured foliation.
\end{itemize}
\end{lemma}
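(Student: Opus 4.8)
The plan is to prove the cyclic chain of implications $(ii)\Rightarrow(i)\Rightarrow(iii)\Rightarrow(iv)\Rightarrow(ii)$, using the null space machinery together with Lemma \ref{lem:charcterization_ue}. The easy arrow is $(ii)\Rightarrow(i)$: it is the special case $\mathfrak{b}=\mathfrak{a}$. For $(iv)\Rightarrow(ii)$ I would use the symmetry of the intersection number: if $G\in \mathcal{N}(\mathfrak{a})$ is a uniquely ergodic measured foliation, then $\mathfrak{a}\in \mathcal{N}(G)$, and by Proposition \ref{prop:translation_null_space1}(2) we have $\mathfrak{a}\in \Gmbdy$. Writing $\mathfrak{a}=\tilde\Psi_{x_0}(tp)$ with $p\in \partialGM{\TeichbX}$ and unwinding the definitions via Theorem \ref{thm:pairing_intrinsic}, the condition $\mathfrak{a}\in \mathcal{N}(G)$ forces $\mathcal{E}_p(F)=0$ for all $F$ in the $\mf$-part of $\mathcal{N}(G)$; since $G$ is uniquely ergodic, that part is the ray through $G$, so Lemma \ref{lem:charcterization_ue} identifies $p=[G]$. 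Hence $\mathfrak{a}$ is a positive multiple of $G$ in $\mf$, and computing $\mathcal{N}(\mathfrak{a})=\mathcal{N}(G)$ directly from unique ergodicity gives $(ii)$.

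The implication $(i)\Rightarrow(iii)$ is the heart of the argument. Assuming $\mathcal{N}(\mathfrak{a})$ is a single ray $\mathbb{R}_+\mathfrak{b}$, I would first note $\mathcal{N}(\mathfrak{a})\neq\{0\}$, so Proposition \ref{prop:translation_null_space1}(1) gives $\mathfrak{a}\in \Gmbdy$, and then (3) of that proposition gives a nonzero $F\in \mathcal{N}(\mathfrak{a})\cap\mf$. By $(i)$ this $F$ must be a positive multiple of $\mathfrak{b}$, so $\mathfrak{b}\in \mf$. By symmetry $\mathfrak{a}\in \mathcal{N}(\mathfrak{b})=\mathcal{N}(F)$ (up to scaling), and $\mathfrak{a}\in \mathcal{N}(F)\cap\mf$ would follow once I know $\mathfrak{a}\in\mf$. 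To get $\mathfrak{a}\in\mf$, the key observation is that $\mathcal{N}(\mathfrak{a})\cap\mf$ is contained in a single ray; the plan is to argue that a boundary point whose null space meets $\mf$ in only one ray must itself be a (uniquely ergodic) measured foliation, again by feeding the hypothesis into Lemma \ref{lem:charcterization_ue}. Concretely, I would write $\mathfrak{a}=\tilde\Psi_{x_0}(tp)$, observe that $F\in\mathcal{N}(\mathfrak{a})\cap\mf$ means $\mathcal{E}_p(F')=0$ for all $F'$ in $\mathcal{N}(F)\cap\mf$ (here using that $F$ lies in the one-dimensional null space and the symmetry $i(\mathfrak{a},F')=0$ whenever $F'\in\mathcal{N}(\mathfrak{a})$), apply Lemma \ref{lem:charcterization_ue} to conclude $p=[F]$, and thus $\mathfrak{a}$ is a positive multiple of $F\in\mf$. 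Finally, unique ergodicity of $\mathfrak{a}$ follows because $(\mathcal{N}(\mathfrak{a})-\{0\})\cap\mf$ is exactly the intersection of the single ray $\mathbb{R}_+\mathfrak{b}$ with $\mf$, forcing every element of it to be projectively equivalent to $\mathfrak{a}$.

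The remaining implication $(iii)\Rightarrow(iv)$ is immediate, since a uniquely ergodic $\mathfrak{a}$ satisfies $\mathfrak{a}\in\mathcal{N}(\mathfrak{a})$ by Proposition \ref{prop:translation_null_space1}(1) (as $\mathfrak{a}\in\mf\subset\Gmbdy$), so $\mathcal{N}(\mathfrak{a})$ contains the uniquely ergodic foliation $\mathfrak{a}$ itself. The main obstacle I anticipate is the step inside $(i)\Rightarrow(iii)$ of upgrading ``$\mathfrak{a}\in\Gmbdy$ with one-dimensional null space'' to ``$\mathfrak{a}\in\mf$''; the subtlety is that a priori $\mathfrak{a}$ could be a boundary point not lying in $\mf$, and ruling this out requires carefully verifying that the hypothesis on $\mathcal{N}(\mathfrak{a})$ supplies exactly the vanishing condition $\mathcal{E}_p\equiv 0$ on $\mathcal{N}(F)\cap\mf$ needed to invoke Lemma \ref{lem:charcterization_ue}. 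I would handle this by using the symmetry and homogeneity of $i(\cdot,\cdot)$ from Theorem \ref{thm:pairing_intrinsic} to translate all null-space conditions into statements about $\mathcal{E}_p$ and the geometric intersection number $I$, where the rigidity lemma applies cleanly.
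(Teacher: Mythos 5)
Your cyclic scheme and the arrows $(ii)\Rightarrow(i)$ and $(iii)\Rightarrow(iv)$ are fine, and the first half of your $(iv)\Rightarrow(ii)$ (applying Lemma \ref{lem:charcterization_ue} to conclude that $\mathfrak{a}$ is a positive multiple of the uniquely ergodic $G$) is a legitimate use of that lemma, since there $G$ \emph{is} uniquely ergodic. The genuine gap is in $(i)\Rightarrow(iii)$, exactly at the step you flag as the main obstacle. To invoke Lemma \ref{lem:charcterization_ue} with $G=F$ you need two things: that $F$ is a simple closed curve or uniquely ergodic (a standing hypothesis of that lemma), and that $\mathcal{E}_p$ vanishes on all of $\mathcal{N}(F)\cap\mf$. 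Neither is available at that stage. What hypothesis $(i)$ gives you is vanishing of $\mathcal{E}_p$ on $\mathcal{N}(\mathfrak{a})\cap\mf$, i.e.\ on the single ray through $F$; the lemma requires vanishing on $\mathcal{N}(F)\cap\mf$, which is in general strictly larger. Under $(i)$, the inclusion $\mathcal{N}(F)\cap\mf\subset\mathcal{N}(\mathfrak{a})$ that you would need is precisely the assertion that $F$ is uniquely ergodic --- the very conclusion being proved --- so the argument is circular. The parenthetical appeal to symmetry ($i(\mathfrak{a},F')=0$ whenever $F'\in\mathcal{N}(\mathfrak{a})$) is tautological and does not supply vanishing on $\mathcal{N}(F)\cap\mf$.

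The missing idea, and the one the paper uses, is self-nullity of boundary points: by (5) of Theorem \ref{thm:pairing_intrinsic}, $i(\mathfrak{a},\mathfrak{a})=0$ for $\mathfrak{a}\in\Gmbdy$. Under $(i)$ we have $\mathcal{N}(\mathfrak{a})\ne\{0\}$, so $\mathfrak{a}\in\Gmbdy$ by (1) of Proposition \ref{prop:translation_null_space1}, hence $\mathfrak{a}\in\mathcal{N}(\mathfrak{a})=\{t\mathfrak{b}\mid t\ge 0\}$; thus the ray is the ray through $\mathfrak{a}$, which is exactly $(ii)$, with no rigidity lemma needed. From $(ii)$, item (3) of Proposition \ref{prop:translation_null_space1} produces a nonzero element of $\mathcal{N}(\mathfrak{a})\cap\mf$, which must be a positive multiple of $\mathfrak{a}$, so $\mathfrak{a}\in\mf$; and any $F\in\mf$ with $I(F,\mathfrak{a})=0$ lies on the ray $\{t\mathfrak{a}\}$, so $\mathfrak{a}$ is uniquely ergodic. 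A second, smaller gap: at the end of your $(iv)\Rightarrow(ii)$ you claim $\mathcal{N}(\mathfrak{a})=\mathcal{N}(G)=\{tG\mid t\ge0\}$ follows ``directly from unique ergodicity,'' but unique ergodicity only controls $\mathcal{N}(G)\cap\mf$; a priori $\mathcal{N}(G)$ could contain points of $\Gmbdy$ outside $\mf$. To exclude these you must rerun your Lemma \ref{lem:charcterization_ue} computation for an arbitrary nonzero $\mathfrak{b}\in\mathcal{N}(G)$ (this is the paper's ``(iii) implies (ii)'' step); that repair uses only tools you already deployed, unlike the repair needed for $(i)\Rightarrow(iii)$.
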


\begin{proof}
{\bf (i) is equivalent to (ii).}\quad
Clearly (ii) implies (i).
Since $\mathcal{N}(\mathfrak{a})\ne \{0\}$,
$\mathfrak{a}\in \Gmbdy$.
Thus,
(ii) follows from (i) since $i(\mathfrak{a},\mathfrak{a})=0$
(cf. Theorem \ref{thm:pairing_intrinsic}).

\medskip
\noindent
{\bf (ii) implies (iii).}\quad
%Take $\zeta\in \partialGM{\TeichbX}$ and $s>0$
%with $\mathfrak{a}=s\zeta$.
By (1) and (3) of Proposition \ref{prop:translation_null_space1},
$\mathfrak{a}\in \Gmbdy$ and $\mathcal{N}(\mathfrak{a})\cap \mf\ne \{0\}$.
Therefore, 
we have $\mathfrak{a}\in \mf$.
%Suppose that $\mathfrak{a}\not\in \mf$.
%From \eqref{eq:intersection_number_pre-definition},
%we have
%\begin{equation}
%\mathcal{E}_\zeta(F)=i_{x_0}(\zeta,\tilde{\Psi}_{x_0}^{-1}(F))=s^{-1}i(\mathfrak{a},F)
%\end{equation}
%for $F\in \mf$.
%Hence $\mathcal{E}_\zeta$ does not vanish on $\mf$
%and $\zeta\in \TeichbX$ by Proposition \ref{prop:zero-boundary}.
%This is a contradiction.
%Hence, $\mathfrak{a}\in \mf$.
Thus,
if $F\in \mf$ satisfies $I(F,\mathfrak{a})=0$,
$F$ is projectively equivalent to $\mathfrak{a}$.
This means that $\mathfrak{a}$ is a uniquely ergodic measured foliation.

\medskip
\noindent
{\bf (iii) implies (ii).}\quad
Let $G\in \mf\subset \GmInv$ be
a uniquely ergodic measured foliation.
Let $\mathfrak{b}\in \mathcal{N}(G)-\{0\}$.
From Proposition \ref{prop:translation_null_space1},
$\mathfrak{b}\in \Gmbdy$.
Let $p\in \partialGM{\TeichbX}$ and $t>0$ with $\mathfrak{b}=\tilde{\Psi}_{x_0}(tp)$.
Then,
by
%\eqref{eq:intersection_number_pre-definition} and
Theorem \ref{thm:pairing_intrinsic},
%\S\ref{subsec:IntersectionNumber_is_intrinsic},
$$
t\,\mathcal{E}_p(G)=i(\tilde{\Psi}_{x_0}(tp),G)=i(\mathfrak{b},G)=0.
$$
By Lemma \ref{lem:charcterization_ue},
$\mathfrak{b}$ is projectively equivalent to $G$.
This means that $\mathcal{N}(G)=\{tG\mid t\ge 0\}$.

\medskip
\noindent
{\bf (iii) is equivalent to (iv).}\quad
Clearly (iii) implies (iv) since $\mathfrak{a}\in \mathcal{N}(\mathfrak{a})$.
Suppose $\mathcal{N}(\mathfrak{a})$ contains
a uniquely ergodic measured foliation $G$.
Since $i(\mathfrak{a},G)=0$,
by applying the same argument  in ``(iii) implies (ii)" above,
we deduce $\mathfrak{a}$ is projectively equivalent to $G$
and $\mathfrak{a}$ is a uniquely ergodic measured foliation.
\qed
\end{proof}

\subsubsection{Proof of Proposition \ref{prop:OMEGApreservePMF}}
\label{subsubsec:Proof_of_homeomorphism}
Let $G\in \mf\subset \GmInv$ be
a uniquely ergodic measured foliation.
Since $\mathcal{N}(G)=\{tG\mid t\ge 0\}$,
we have from Proposition \ref{prop:translation_null_space2}
that
$$
 h_{\omega'}\circ  h_{\omega}(\mathcal{N}(G))
 \subset \mathcal{N}(G)=\{tG\mid t\ge 0\}.
 $$
%\subset 
%\mathcal{N}(G)\cap h_{\omega'}\circ h_\omega(\mathcal{N}(G))
Since $h_{\omega'}\circ  h_{\omega}(G)\in h_{\omega'}\circ  h_{\omega}(\mathcal{N}(G))$,
$h_{\omega'}\circ  h_{\omega}(\mathcal{N}(G))\ne \{0\}$.
Therefore,
$$
h_{\omega'}\circ  h_{\omega}(\mathcal{N}(G))=\mathcal{N}(G)=\{tG\mid t\ge 0\}.
$$
%Take $\mathfrak{a}\in \mathcal{N}(h_\omega(G))$
%with $h_{\omega'}(\mathcal{a})=G$.
%Let $\mathfrak{b}\in \mathcal{N}(h_\omega(G))$.
%Since $i(\mathfrak{a},\mathfrak{b})=0$,
%$i(G,h_{\omega'}(\mathfrak{b}))=
%i(h_{\omega'}(\mathfrak{a}),h_{\omega'}(\mathfrak{b}))=0$.
%Hence,
%$h_{\omega'}(\mathfrak{b})=t'G$ for some $t'>0$.
%This means that
%$h_{\omega'}(\mathcal{N}(h_\omega(G)))=\{tG\mid t\ge 0\}$.
%Since 
%$$
%\{0\}\ne h_{\omega'}\circ h_\omega(\mathcal{N}(G))\subset 
%h_{\omega'}(\mathcal{N}(h_\omega(G))=\{tG\mid t\ge 0\},
%$$
%we have
%$$
%h_{\omega'}\circ h_\omega(\mathcal{N}(G))=
%\{tG\mid t\ge 0\}
%=\mathcal{N}(G).
%$$
This implies that $\omega'\circ \omega([G])=[G]$.

Since
%$[G]$ is taken arbitrarily from $\pmf^{UE}$ and
%$\pmf^{UE}$ 
the set $\pmf^{UE}$ of uniquely ergodic measured foliations
is dense in $\pmf$
and $\omega$ and $\omega'$ are continuous,
%(cf. \cite{Masur2}),
we conclude that $\omega'\circ \omega$ is the identity mapping on $\pmf$.
By applying the same argument,
we deduce that $\omega\circ \omega'$ is also
the identity on $\pmf$.
In particular,
since
$$
\pmf
=\omega\circ \omega'(\pmf)
\subset \omega(\partialGM{\TeichbX}),
$$
$\mf$ is contained in both $h_\omega(\Gmbdy)$
and $h_{\omega'}(\Gmbdy)$.

Let $[G]\in \pmf^{UE}$ again.
By Proposition \ref{prop:translation_null_space1},
we can take $F\in \mathcal{N}(h_\omega(G))\cap\mf$
with $F\ne 0$.
Since $\mf\subset h_\omega(\Gmbdy)$,
there is an $\mathfrak{a}\in \Gmbdy$
such that $F=h_\omega(\mathfrak{a})$.
Since $i(h_\omega(\mathfrak{a}),h_\omega(G))=i(F,h_\omega(G))=0$,
we have
from Proposition \ref{prop:translation_null_space2} that
$i(\mathfrak{a},G)=0$.
Hence,
it follows from Lemma \ref{lem:unuquely_ergodic_points}
that $\mathfrak{a}=tG$
for some $t>0$.
Therefore $h_\omega(G)=t^{-1}F\in \mf$,
and
$\omega([G])\in \pmf$
for all $[G]\in \pmf^{UE}$.
By applying the same argument to $h_{\omega'}$,
we conclude that $\omega(\pmf)\subset \pmf$
and $\omega'(\pmf)\subset \pmf$
from the density of uniquely ergodic measured foliations in $\mf$.

On the other hand,
since
$\omega\circ \omega'$ and $\omega'\circ \omega$ are the identity on $\pmf$,
we deduce
$$
\pmf
=\omega\circ \omega'(\pmf)
\subset \omega(\pmf)
\subset \pmf
$$
and we are done.
\qed

\subsubsection{Null space in $\mf$}
From Proposition \ref{prop:OMEGApreservePMF},
we have the following observation.

\begin{proposition}
\label{prop:omega_preserving_mf}
Let $\omega$ be as Proposition \ref{prop:OMEGApreservePMF}.
For $G\in \mf$,
%\subset \GmInv$,
%\begin{equation}
%\label{eq:scc_pre}
$$
h_\omega(\mathcal{N}(G)\cap \mf)=
\mathcal{N}(h_\omega(G))\cap \mf.
$$
%\end{equation}
\end{proposition}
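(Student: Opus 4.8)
The plan is to deduce this from Proposition \ref{prop:OMEGApreservePMF} together with the null-space invariance recorded in Proposition \ref{prop:translation_null_space2}. The first step I would carry out is to observe that $h_\omega$ maps $\mf$ onto $\mf$. Indeed, by Proposition \ref{prop:OMEGApreservePMF} the map $\omega$ restricts to a self-homeomorphism of $\pmf$; since $\mfC=\pmf\times\mathbb{R}_+/(\pmf\times\{0\})$ and $H_\omega$ is homogeneous, this gives $H_\omega(\mfC)=\mfC$, and applying the model map $\tilde{\Psi}_{x_0}$ (for which $\tilde{\Psi}_{x_0}(\mfC)=\mf$) yields $h_\omega(\mf)=\mf$. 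In particular $h_\omega$ restricts to a bijection of $\mf$ onto itself, so in particular it is surjective there.

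With this in hand the two inclusions are formal. For the inclusion $\subseteq$, I would take $F\in\mathcal{N}(G)\cap\mf$, so that $i(G,F)=0$ with $G,F\in\mf\subset\Gmbdy$; Proposition \ref{prop:translation_null_space2} then gives $i(h_\omega(G),h_\omega(F))=0$, while the first step gives $h_\omega(F)\in\mf$, so $h_\omega(F)\in\mathcal{N}(h_\omega(G))\cap\mf$. For the reverse inclusion $\supseteq$, I would take $F'\in\mathcal{N}(h_\omega(G))\cap\mf$ and use the surjectivity of $h_\omega$ on $\mf$ to write $F'=h_\omega(F)$ for some $F\in\mf$; then $i(h_\omega(G),h_\omega(F))=0$, and the ``only if'' direction of Proposition \ref{prop:translation_null_space2}, applied to $G,F\in\Gmbdy$, gives $i(G,F)=0$, i.e. $F\in\mathcal{N}(G)\cap\mf$, whence $F'\in h_\omega(\mathcal{N}(G)\cap\mf)$.

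The only genuine content sits in the first step, the equality $h_\omega(\mf)=\mf$; once the preservation of $\pmf$ from Proposition \ref{prop:OMEGApreservePMF} is combined with homogeneity of $H_\omega$, everything else is a direct application of the equivalence in Proposition \ref{prop:translation_null_space2}. I therefore expect no serious obstacle, since the proposition is essentially a repackaging of the two preceding results; the only point requiring care is to verify that each intersection-number comparison is applied to points genuinely lying in $\Gmbdy$, which holds here because $G$, $F$, and their images under $h_\omega$ all lie in $\mf\subset\Gmbdy$.
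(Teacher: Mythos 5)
Your proof is correct and takes essentially the same approach as the paper: both arguments first upgrade Proposition \ref{prop:OMEGApreservePMF} to the statement that $h_\omega$ restricts to a self-bijection of $\mf$, and then transfer the vanishing of intersection numbers through Proposition \ref{prop:translation_null_space2} to get both inclusions. The only cosmetic difference is that the paper realizes the surjectivity step via the quasi-inverse $h_{\omega'}$ (using $h_{\omega}=h_{\omega'}^{-1}$ on $\mf$), whereas you invoke surjectivity together with the if-and-only-if statement of Proposition \ref{prop:translation_null_space2} directly.
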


\begin{proof}
Take a quasi-inverse $\omega'$ of $\omega$.
Notice as in Proposition \ref{prop:OMEGApreservePMF}
that $\omega'=\omega^{-1}$
on $\pmf$.
Therefore,
the restrictions of
$h_\omega$ and $h_{\omega'}$ to $\mf$ are
self-homeomorphisms of $\mf$
and
$h_{\omega}=h_{\omega'}^{-1}$ on $\mf$.

Take $F\in \mathcal{N}(h_\omega(G))\cap \mf$.
%By \eqref{eq:intersection_mcg_2},
Since $i(h_\omega\circ h_{\omega'}(F),h_\omega(G))=i(F,h_\omega(G))=0$,
we have $i(h_{\omega'}(F),G)=0$
and $h_{\omega'}(F)\in \mathcal{N}(G)\cap \mf$
from Proposition \ref{prop:translation_null_space2}.
Therefore,
\begin{align*}
\mathcal{N}(h_\omega(G))\cap \mf
&\subset
h_{\omega'}^{-1}(\mathcal{N}(G)\cap \mf) \\
&\subset
h_{\omega'}^{-1}(\mathcal{N}(G))
\cap h_{\omega'}^{-1}(\mf)
=
h_\omega(\mathcal{N}(G))\cap \mf.
\end{align*}
Conversely,
let $F\in h_\omega(\mathcal{N}(G))\cap \mf$.
Take $H\in \mathcal{N}(G)$ with $h_\omega(H)=F$.
%By \eqref{eq:intersection_mcg_2} again,
By Proposition \ref{prop:translation_null_space2} again,
$i(h_{\omega'}(F),G)=i(H,G)=0$
implies $i(F,h_\omega(G))=0$.
Therefore,
we obtain
$F\in \mathcal{N}(h_\omega(G))\cap \mf$
and
$$
h_\omega(\mathcal{N}(G))\cap \mf
\subset
\mathcal{N}(h_\omega(G))\cap \mf,
$$
and we are done.
\qed
\end{proof}

%\begin{proof}[Proof of Proposition \ref{prop:OMEGApreservePMF}]
%%Let $G\in \mf\subset \GmInv$ be
%%a uniquely ergodic measured foliation.
%%Let $\mathfrak{b}\in \mathcal{N}(G)-\{0\}$.
%%From the argument above,
%%$\mathfrak{b}\in \Gmbdy$.
%%Let $p\in \partialGM{\TeichbX}$ and $t>0$ with $\mathfrak{b}=\tilde{\Psi}_{x_0}(tp)$.
%%Then,
%%by \eqref{eq:intersection_number_pre-definition} and
%%\eqref{subsec:IntersectionNumber_is_intrinsic},
%%$$
%%t\,\mathcal{E}_p(G)=i(\tilde{\Psi}_{x_0}(tp),G)=i(\mathfrak{b},G)=0.
%%$$
%%Hence, by Lemma \ref{lem:charcterization_ue},
%%$\mathfrak{b}$ is projectively equivalent to $G$.
%%This means that $\mathcal{N}(G)=\{tG\mid t\ge 0\}$.
%By Proposition \ref{prop:translation_null_space},
%$\mathcal{N}(h_\omega(G))=h_\omega(\mathcal{N}(G))$.
%Since $h_\omega$ is homeomorphism on $\Gmbdy$,
%by Lemma \ref{lem:unuquely_ergodic_points},
%$h_\omega(G)$ is a uniquely ergodic measured foliation.
%Expecially, $h_\omega(G)\in \mf$.
%Since
%$$
%\tilde{\Psi}_{x_0}^{-1}(h_\omega(G))=\ext_{x_0}(h_\omega(G))^{1/2}[h_\omega(G)],
%$$
%we conclude that for any uniquely ergodic measured foliation $G$,
%$\omega([G])$ is contained in $\pmf$.
%Since uniquely ergodic measured foliations are dense in $\mf$,
%from the continuity of $\omega$,
%$\omega(\pmf)\subset \pmf$.
%Since $\omega$ is injective,
%by the Borsuk-Ulam theorem,
%$\omega$ maps $\pmf$
%onto $\pmf$
%since $\pmf$ is homeomorphic to a sphere
%(cf. Corollay 9.3 of \S9 in \cite{Massey}).
%\end{proof}

\subsection{Proof of Theorem \ref{thm:almost_isometry}}
\label{subsec:proof_of_theorem_characterization}
From Proposition \ref{prop:OMEGApreservePMF},
it suffices to check the assertion (2) in the theorem.

%By Proposition
%\ref{prop:OMEGApreservePMF},
%$h_\omega(\mf)=\mf$.
%Furthermore,
%since $\omega$ is a homeomorphism of $\pmf$,
%by \eqref{eq:intersection_mcg_1},
%we have
%\begin{equation}
%\label{eq:scc_pre}
%h_\omega(\mathcal{N}(\mathfrak{a})\cap \mf)
%%=h_\omega(\mathcal{N}(\mathfrak{a}))\cap\mf
%=\mathcal{N}(h_\omega(\mathfrak{a}))\cap \mf
%\end{equation}
%for $\mathfrak{a}\in \mf$.
%%for $\mathfrak{a}\in \GmInv$.
%%from Proposition \ref{prop:translation_null_space}.

%Let $\alpha=1\cdot \alpha\in \mathbb{R}_+\otimes \mathcal{S}$.
We identify $\alpha\in \mathcal{S}$ as an element of $\Gmbdy$ by
\eqref{eq:scc_embedded}.
Then,
by Proposition \ref{prop:OMEGApreservePMF},
$h_\omega(\alpha)\in \mf$.
%since $\alpha\in \mathbb{R}_+\otimes \mathcal{S}\subset \mf$.
Notice that
$\mathcal{N}(\alpha)\cap \mf$ is a subset of
codimension one in $\mf$.
%Therefore,
By Proposition \ref{prop:omega_preserving_mf},
%\eqref{eq:scc_pre}
so is $\mathcal{N}(h_\omega(\alpha))\cap \mf$
since $h_\omega$ is a self-homeomorphism of $\mf$.
Since the complex dimension of $\Teich{g}{m}$ is at least $2$,
by virtue of Theorem 4.1 in \cite{Ivanov},
we deduce that $h_\omega(\alpha)\in \mathbb{R}_+\otimes \mathcal{S}$.
By applying the same argument to the quasi-inverse $\omega'$,
we conclude that the action of $\omega$ on $\pmf$ preserves $\mathcal{S}$.
Namely,
$\omega$ is a bijection from $\mathcal{S}$ onto $\mathcal{S}$.

Let $\alpha$, $\beta\in \mathcal{S}$ with $i(\alpha,\beta)=0$.
Then,
$\beta\in \mathcal{N}(\alpha)\cap \mf$.
By the argument above,
$h_\omega(\beta)\in \mathcal{N}(h_\omega(\alpha))\cap\mf$
and hence $i(h_\omega(\alpha),h_\omega(\beta))=0$.
This means that $\omega\colon \mathcal{S}\to \mathcal{S}$
induces an automorphism of the complex of curves of $X$.
\qed

\subsection{Proof of Corollary \ref{coro:Ivanov_characterization}}
\label{subsec:isometry_on_TX}
The purpose of this section
is to prove
Corollary \ref{coro:Ivanov_characterization} by applying Theorem \ref{thm:almost_isometry}.
%Here we notice that
It is known that
any isometry of $(\Teich{g}{m},d_T)$
extends to $\partialGM{\Teich{g}{m}}$ as a homeomorphism
%since $\cl{\Teich{g}{m}}$ is canonically identified with the horofunction compactification
%of $(\Teich{g}{m},d_T)$ 
(cf. \cite{LiuSu}. See also \cite{BriF}, \cite{Gromov}  and \cite{Rieffel}).

\subsubsection{Action of the extended mapping class group}
Before proving Corollary \ref{coro:Ivanov_characterization},
we shall recall the action of the extended mapping class group
on Teichm\"uller space (cf. \cite{IT} and \cite{MW}).

The \emph{extended mapping class group} $\mcg^*(X)$ is defined by
$$
\mcg^*(X)={\rm Diff}(X)/{\rm Diff}_0(X)
$$
where ${\rm Diff}(X)$ is the group of diffeomorphisms of $X$
and ${\rm Diff}_0(X)$ is the normal subgroup of ${\rm Diff}(X)$
consisting of diffeomorphisms which are isotopic to the identity.
Here,
we may choose $X$ so that it admits an antiholomorphic reflection $j_X\colon
X\to X$.

Let $\psi\in {\rm Diff}(X)$.
If $\psi$ is an orientation preserving diffeomorphism,
the action of the mapping class of $\psi$ is defined by
$$
\psi_*(Y,f)=(Y,f\circ \psi^{-1}).
$$
If $\psi$ is represented by an orientation reversing diffeomorphism,
there is an orientation preserving diffeomorphism $\vartheta_\psi$ such that
$\psi$ is isotopic to $\vartheta_\psi\circ j_X$.
Then,
the action of $\psi$ is defined by
$$
\psi_*(Y,f)=(Y^*,\overline{r}_Y\circ f\circ j_X\circ \vartheta_\psi^{-1}),
$$
where $Y^*$ is the conjugate Riemann surface to $Y$,
that is,
the coordinate charts of $Y^*$ are those of $Y$
followed by complex conjugations,
and $\overline{r}_Y:Y\to Y^*$ is the anticonformal mapping
induced by the identity mapping on the underlying surface of $Y$.

The following is well-known.
However,
we give a proof here because
the author cannot find a suitable reference in the case of
the action of orientation reversing diffeomorphisms.

\begin{lemma}[Isometry]
\label{lem:isometric_action_extended_mapping_class_group}
Any element in the extended mapping class group
acts isometrically on $(\Teich{g}{m},d_T)$.
\end{lemma}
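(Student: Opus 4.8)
The plan is to treat orientation-preserving and orientation-reversing mapping classes separately, in each case identifying the marking of the comparison map between two image points and comparing the relevant maximal dilatations in \eqref{eq:original_teichmuller_distance}.

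First, suppose $\psi\in {\rm Diff}(X)$ is orientation preserving. For $y_i=(Y_i,f_i)\in\Teich{g}{m}$ ($i=1,2$) we have $\psi_*(y_i)=(Y_i,f_i\circ\psi^{-1})$, and the marking defining the comparison map satisfies
$$
(f_2\circ\psi^{-1})\circ(f_1\circ\psi^{-1})^{-1}=f_2\circ f_1^{-1}.
$$
Hence the homotopy class of quasiconformal maps $Y_1\to Y_2$ entering \eqref{eq:original_teichmuller_distance} is literally unchanged, and $d_T(\psi_*y_1,\psi_*y_2)=d_T(y_1,y_2)$ follows immediately.

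Next, for an orientation-reversing $\psi$ we write it isotopic to $\vartheta_\psi\circ j_X$ as in the statement, and set $g_i=\overline{r}_{Y_i}\circ f_i\circ j_X\circ\vartheta_\psi^{-1}$, so that $\psi_*(y_i)=(Y_i^*,g_i)$. The first step is to simplify the comparison marking $g_2\circ g_1^{-1}$. Using that $j_X$ is an involution ($j_X^{-1}=j_X$) and that $\vartheta_\psi^{-1}\circ\vartheta_\psi={\rm id}$, a direct calculation collapses the middle factors and yields
$$
g_2\circ g_1^{-1}=\overline{r}_{Y_2}\circ(f_2\circ f_1^{-1})\circ \overline{r}_{Y_1}^{-1}.
$$
Thus the comparison map between $\psi_*y_1$ and $\psi_*y_2$ is obtained from the one between $y_1$ and $y_2$ simply by conjugating with the anticonformal reflections $\overline{r}_{Y_1}$ and $\overline{r}_{Y_2}$.

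The key step is then the invariance of the maximal dilatation under composition with anticonformal maps: for any quasiconformal $h\colon Y_1\to Y_2$ one has $K(\overline{r}_{Y_2}\circ h\circ \overline{r}_{Y_1}^{-1})=K(h)$, since the modulus of the Beltrami coefficient is unaffected by conformal or anticonformal changes of local coordinates. Because $h\mapsto \overline{r}_{Y_2}\circ h\circ \overline{r}_{Y_1}^{-1}$ carries the homotopy class of $f_2\circ f_1^{-1}$ bijectively onto that of $g_2\circ g_1^{-1}$ while preserving $K$, the two infima in \eqref{eq:original_teichmuller_distance} agree, giving $d_T(\psi_*y_1,\psi_*y_2)=d_T(y_1,y_2)$. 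I expect the main point to verify carefully to be precisely this anticonformal invariance, together with the bookkeeping of the conjugate surfaces $Y_i^*$ and the reflections $\overline{r}_{Y_i}$; once that is in place the isometry property drops out at once.
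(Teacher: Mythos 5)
Your proposal is correct and follows essentially the same route as the paper: split into the orientation-preserving case (where the comparison marking is literally unchanged) and the orientation-reversing case, compute that the comparison map becomes $\overline{r}_{Y_2}\circ f_2\circ f_1^{-1}\circ \overline{r}_{Y_1}^{-1}$, and conclude by the invariance of the maximal dilatation under pre- and post-composition with the anticonformal reflections. The only difference is cosmetic: the paper simply cites the orientation-preserving case as well known, whereas you verify it directly, and you spell out the $K$-preserving bijection of homotopy classes that the paper leaves implicit in the phrase ``since each $\overline{r}_{Y_i}$ are anticonformal.''
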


\begin{proof}
Let $\psi\in \mcg^*(X)$.
If $\psi$ is represented by an orientation preserving diffeomorphism,
the assertion is well-known (cf. e.g \cite{IT}).

Suppose that $\psi$ is represented by an orientation reversing diffeomorphism.
Let $\vartheta_\psi$ as above.
From the original definition of the Teichm\"uller distance
\eqref{eq:original_teichmuller_distance},
we have
$$
d_T(\psi_*(Y_1,f_1),\psi_*(Y_2,f_2))=\frac{1}{2}
\log \inf_{h'}K(h')
$$
where $h'$ which runs over all quasiconformal mapping from $Y_1^*$
to $Y_2^*$
homotopic to
$$
(f_2\circ j_X\circ \vartheta_\psi)\circ (f_1\circ j_X\circ \vartheta_\psi)^{-1}
=\overline{r}_{Y_2}\circ f_2\circ f_1^{-1}\circ \overline{r}_{Y_1}^{-1}.
$$
Since each $\overline{r}_{Y_i}$ are anticonformal,
the action of $\psi_*$ is an isometry.
\qed
\end{proof}

In the proof of the following lemma,
we use the following simple formula:
For any simple closed curve $\alpha$ on a Riemann surface $Y$,
\begin{equation} \label{eq:extremal_length_conjugation}
\ext_{Y^*}(\overline{r}_{Y}(\alpha))=\ext_Y(\alpha).
\end{equation}
Indeed,
the modulus of an annulus does not change
under taking the complex conjugation
(cf. \eqref{eq:extremal_length_original}).
%Indeed,
%notice that any conformal metric $\rho=\rho(z)|dz|$ on $Y$
%is naturally realized as a conformal metric on $Y^*$
%by push-forwarding by $\overline{r}_Y$.
%Hence the equality holds since  the extremal length is defined by
%the supremum of the square of $\rho$-length over the $\rho$-area,
%where $\rho$ runs over all conformal metrics
%(cf. \cite{Ahlfors}).

\begin{lemma}[Action at the boundary]
\label{lem:action_bdy}
For $\psi\in \mcg^*(X)$,
the restriction of the action of $\psi$ to
$\pmf\subset \partialGM{\Teich{g}{m}}$
coincides with the canonical action of $\psi$ on $\pmf$,
that is,
the continuous extension of the action 
$\mathcal{S}\ni \alpha\mapsto \psi(\alpha)\in \mathcal{S}$.
\end{lemma}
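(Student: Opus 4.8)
The plan is to reduce the boundary statement to the elementary transformation rule for extremal length under $\psi$, and then pass to the limit using the continuity of the functions $\mathcal{E}_p$. Recall that $\psi_*$ acts isometrically on $(\Teich{g}{m},d_T)$, hence extends to a homeomorphism of $\partialGM{\Teich{g}{m}}$; the task is to locate the image of a point $[G]\in\pmf$. First I would establish that $\ext_{\psi_*(y)}(\alpha)=\ext_y(\psi^{-1}(\alpha))$ for every $y\in\Teich{g}{m}$ and $\alpha\in\mathcal{S}$. If $\psi$ is orientation preserving this is immediate from \eqref{eq:extremal_length_original}, since $\psi_*(Y,f)=(Y,f\circ\psi^{-1})$ replaces the homotopy class of the annulus core $f(\alpha)$ by $f(\psi^{-1}(\alpha))$. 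If $\psi$ is orientation reversing, writing $\psi_*(Y,f)=(Y^*,\overline{r}_Y\circ f\circ j_X\circ\vartheta_\psi^{-1})$ and peeling off the anticonformal reflection via the conjugation invariance \eqref{eq:extremal_length_conjugation}, the computation reduces to $\ext_y(j_X\vartheta_\psi^{-1}(\alpha))$, which equals $\ext_y(\psi^{-1}(\alpha))$ because $\psi$ is isotopic to $\vartheta_\psi\circ j_X$ and $j_X$ is an involution. By Kerckhoff's continuous extension of extremal length together with the continuity of the $\psi$-action on $\mf$, the rule upgrades to $\ext_{\psi_*(y)}(F)=\ext_y(\psi^{-1}(F))$ for all $F\in\mf$.

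Next I would transfer this to the functions $\mathcal{E}$. Since $\mathcal{E}_y(F)=(\ext_y(F)/K_y)^{1/2}$, the rule above gives $\mathcal{E}_{\psi_*(y)}(F)=\lambda(y)\,\mathcal{E}_y(\psi^{-1}(F))$, where $\lambda(y)=\exp(d_T(x_0,y)-d_T(x_0,\psi_*(y)))>0$. Now fix $[G]\in\pmf$ and choose $y_n\to[G]$ in $\cl{\Teich{g}{m}}$. By continuity of the boundary extension, $\psi_*(y_n)\to\psi_*([G])$, so Proposition \ref{prop:continuity_E_p} yields $\mathcal{E}_{\psi_*(y_n)}\to\mathcal{E}_{\psi_*([G])}$ and $\mathcal{E}_{y_n}(\psi^{-1}(\cdot))\to\mathcal{E}_{[G]}(\psi^{-1}(\cdot))$, both uniformly on compact subsets of $\mf$ (the second because $\psi^{-1}$ is a homeomorphism of $\mf$). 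Evaluating at one $F_0$ with $\mathcal{E}_{[G]}(\psi^{-1}(F_0))\neq 0$ then shows that $\lambda(y_n)$ converges to a limit $\lambda_\infty$, which is both positive and finite since neither $\mathcal{E}_{\psi_*([G])}$ nor $\mathcal{E}_{[G]}\circ\psi^{-1}$ vanishes identically. Passing to the limit gives $\mathcal{E}_{\psi_*([G])}(F)=\lambda_\infty\,\mathcal{E}_{[G]}(\psi^{-1}(F))$ for all $F\in\mf$.

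Finally I would identify this image with the canonical one. Substituting \eqref{eq:E-p-For-measured-foliation}, namely $\mathcal{E}_{[G]}(\psi^{-1}(F))=I(\psi^{-1}(F),G)/\ext_{x_0}(G)^{1/2}$, and using the invariance of the geometric intersection number under the homeomorphism $\psi$, that is $I(\psi^{-1}(F),G)=I(F,\psi(G))$, I obtain $\mathcal{E}_{\psi_*([G])}(F)=c\,I(F,\psi(G))$ for a positive constant $c$. Restricted to $\mathcal{S}$ this is precisely the function defining $[\psi(G)]$ through the embedding \eqref{eq:scc_embedded}, so $\psi_*([G])=[\psi(G)]$. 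Since $[G]\mapsto[\psi(G)]$ is exactly the continuous extension of $\alpha\mapsto\psi(\alpha)$ from $\mathcal{S}$ to $\pmf$, this is the asserted coincidence.

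I expect the main obstacle to be the orientation reversing case of the first step: carefully tracking the conjugate surface $Y^*$, the reflection $\overline{r}_Y$, and the isotopy $\psi\simeq\vartheta_\psi\circ j_X$ so as to produce the clean, orientation-independent rule $\ext_{\psi_*(y)}(\alpha)=\ext_y(\psi^{-1}(\alpha))$. Once this formula is in hand the remainder is soft, relying only on continuity, normal families, and the homeomorphism invariance of $I$; the only further technical point, the boundedness of the projective scalars $\lambda(y_n)$ away from $0$ and $\infty$, is dispatched by evaluating at a foliation where the limiting functions do not vanish.
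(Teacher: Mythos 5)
Your proof is correct, but it follows a genuinely different route from the paper's. The paper treats only the orientation-reversing case explicitly (citing Theorem 1.3 of \cite{Mi1} for the other), and argues along Jenkins--Strebel rays: it takes the ray $R_{\alpha,x_0}$ converging to $[\alpha]$, shows via the conjugation invariance \eqref{eq:extremal_length_conjugation} that $\ext_{\psi_*(R_{\alpha,x_0}(t))}(\psi(\beta))=O(1)$ for every $\beta$ disjoint from $\alpha$, deduces that the limit function $\mathcal{E}_{p_\infty}$ of the image ray vanishes on $\mathcal{N}(\psi(\alpha))\cap\mf$, and then invokes the rigidity Lemma \ref{lem:charcterization_ue} to conclude $p_\infty=[\psi(\alpha)]$; the extension from $\mathcal{S}$ to all of $\pmf$ is then an implicit density-and-continuity step. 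You instead prove the global naturality formula $\ext_{\psi_*(y)}(\cdot)=\ext_y(\psi^{-1}(\cdot))$ (using the same conjugation trick \eqref{eq:extremal_length_conjugation} to handle $\overline{r}_Y$ and $j_X\circ\vartheta_\psi^{-1}\simeq\psi^{-1}$ in the reversing case), convert it into the exact transformation rule $\mathcal{E}_{\psi_*(y)}=\lambda(y)\,\mathcal{E}_y\circ\psi^{-1}$ for the normalized functions, pass to the boundary with Proposition \ref{prop:continuity_E_p}, and identify the limit through \eqref{eq:E-p-For-measured-foliation} and the homeomorphism invariance of $I$. What your route buys: it works directly at every $[G]\in\pmf$ at once, and it avoids the ray machinery entirely --- no Theorem 5.1 of \cite{GM}, no uniqueness-of-limit input from \cite{Mi3}, and no appeal to Lemma \ref{lem:charcterization_ue}. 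What the paper's route buys: given the machinery it has already developed, the verification is shorter, and it never needs the scalar bookkeeping you perform; your handling of that point (showing $\lambda(y_n)$ converges by evaluating at an $F_0$ with $\mathcal{E}_{[G]}(\psi^{-1}(F_0))\neq 0$, and ruling out $\lambda_\infty=0$ because $\mathcal{E}_{\psi_*([G])}$ cannot vanish identically by (E1)) is the one delicate step in your argument, and it is handled correctly.
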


\begin{proof}
Let $\psi\in \mcg^*(X)$.
We only check the case where
$\psi$ corresponds to an orientation reversing diffeomorphism.
The other case can be treated in a similar way
(cf. e.g. Theorem 1.3 of \cite{Mi1}).

For $\alpha\in \mathcal{S}$,
we denote by $R_{\alpha,y}\colon [0,\infty)\to \Teich{g}{m}$
the Teichm\"uller geodesic ray
which  emanates from $y$ and is defined by the Jenkins-Strebel differential
on $y$ whose vertical foliation is $\alpha$.
Let $(X_t,f_t)=R_{\alpha,x_0}(t)$ for $t\ge 0$.
Let $p_\infty\in \partialGM{\Teich{g}{m}}$
be the limit of the Teichm\"uller geodesic ray $t\mapsto \psi_*(R_{\alpha,x_0}(t))$.

Take $\beta\in \mathcal{S}$ with $i(\alpha,\beta)=0$.
From the proof of Theorem 5.1 of \cite{GM},
$$
\ext_{X_t}(f_t(\beta))=\ext_{R_{\alpha,x_0}(t)}(\beta)=O(1)
$$
as $t\to \infty$
(see also \cite{Ker}).
Take $\vartheta_\psi$ as above.
Since $\vartheta_\psi\circ j_X$ is isotopic to $\psi$,
\begin{align}
\ext_{\psi_*(R_{\alpha,x_0}(t))}(\psi(\beta))
&=\ext_{X_t^*}(\overline{r}_{X_t}\circ f_t\circ j_X\circ \vartheta_\psi^{-1}(\psi(\beta))) \nonumber \\
&=\ext_{X_t^*}(\overline{r}_{X_t}\circ f_t(\beta))=\ext_{X_t}(f_t(\beta))=O(1)
\label{eq:action_psi_beta}
\end{align}
as $t\to \infty$ (cf. \eqref{eq:extremal_length_conjugation}).
This means that the corresponding function
$\mathcal{E}_{p_\infty}$ at the limit $p_\infty$ satisfies
\begin{align*}
\mathcal{E}_{p_\infty}(\beta')
&=
\lim_{t \to\infty}
\mathcal{E}_{\psi_*(R_{\alpha,x_0}(t))}(\beta') \\
&=
\lim_{t \to\infty}
e^{-d_T(x_0,\psi_*(R_{\alpha,x_0}(t)))}\cdot 
\ext_{\psi_*(R_{\alpha,x_0}(t))}(\beta')^{1/2}=0
\end{align*}
for all $\beta'\in \mathcal{S}$ with $i(\psi(\alpha),\beta')=0$.
Since the set $\{t\beta'\in \mathbb{R}_+\otimes \mathcal{S}\mid i(\psi(\alpha),\beta')=0\}$
is dense in $\mathcal{N}(\psi(\alpha))\cap \mf$,
by Lemma \ref{lem:charcterization_ue},
the limit $p_\infty$ is equal to the projective class of $\psi(\alpha)$.
\qed
\end{proof}

\subsubsection{Proof of Corollary \ref{coro:Ivanov_characterization}}
\label{subsub:Proof_corollary_Ivanov}
Let $\omega$ be an isometry of $\Teich{g}{m}$.
Then,
$\omega$ extends homeomorphically to $\cl{\Teich{g}{m}}$
(cf. \cite{LiuSu}).
We denote by the same symbol $\omega$ the extension.
By Theorem \ref{thm:almost_isometry}
and Theorems by Ivanov, Korkmaz and Luo
in \cite{Ivanov0}, \cite{Korkmaz} and \cite{Luo},
%the action of $\omega$ on $\pmf$ is induced from a
%diffeomorphism of $X$,
there is a diffeomorphism $h$ on $X$ which induces the action of the complex of curves above.
%the restriction of $\omega$ to $\pmf$ coincides with
%that of the action of $h$ to $\pmf$
%(cf. \cite{Ivanov0}, \cite{Korkmaz} and \cite{Luo}).
%We claim
%
%\medskip
%\noindent
%{\bf Claim.}
%$h$ is an orientation preserving homeomorphism on $X$.
%
%\medskip
%We postphone proving the claim to \S\ref{subsec:claim},
%and assume the claim.
%Since $h$ is orientation preserving,
By Lemma \ref{lem:isometric_action_extended_mapping_class_group}
$h$ acts on $\Teich{g}{m}$ isometrically
and the action extends on $\cl{\Teich{g}{m}}$.
We denote by $h_*$ the action of $h$ to $\cl{\Teich{g}{m}}$.
Let $\overline{\omega}=\omega\circ h_*^{-1}$.
By Lemma \ref{lem:action_bdy},
$\overline{\omega}$ acts on $\Teich{g}{m}$ isometrically
and coincides with the identity on
$\pmf\subset \partialGM{\Teich{g}{m}}$.

%\begin{proof}
The following argument is impressed with 
the proof of Theorem A in \cite{Ivanov}.
However,
our situation is different from that in Ivanov's proof
as we mentioned in \S\ref{subsubsec:isometry-teich}.
%%As we discussed in Introduction,
%Indeed,
%we consider the Gardiner-Masur compactification
%while he discussed with ``exponential maps" of Teichm\"uller geodesic rays.
For completeness,
we proceed to prove the theorem.

\begin{claim}
\label{claim:fixed_pt}
$\overline{\omega}$ has a fixed point in $\Teich{g}{m}$.
\end{claim}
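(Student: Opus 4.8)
The plan is to prove the claim in the strong form $\overline{\omega}(x_0)=x_0$, which is the most transparent route given the extended Gromov product. The starting point is that, since $\overline{\omega}$ is an isometry, the defining formula (1) of Corollary \ref{coro:Gromov_product_boundary} gives $\gromov{\overline{\omega}(y)}{\overline{\omega}(z)}{\overline{\omega}(x_0)}=\gromov{y}{z}{x_0}$ for all $y,z\in\Teich{g}{m}$. I would then observe that both sides extend continuously to $\cl{\Teich{g}{m}}\times\cl{\Teich{g}{m}}$: the right side by Corollary \ref{coro:Gromov_product_boundary} with basepoint $x_0$, and the left side by the same Corollary with basepoint $\overline{\omega}(x_0)\in\Teich{g}{m}$ precomposed with the homeomorphic extension $\overline{\omega}\times\overline{\omega}$ of $\overline{\omega}$ to $\cl{\Teich{g}{m}}$ (isometries extend by \cite{LiuSu}). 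As the two continuous functions agree on the dense set $\Teich{g}{m}\times\Teich{g}{m}$, they agree on the closure. Since $\overline{\omega}$ restricts to the identity on $\pmf$, evaluating at $([F],[G])$ yields $\gromov{[F]}{[G]}{\overline{\omega}(x_0)}=\gromov{[F]}{[G]}{x_0}$ for all $[F],[G]\in\pmf$.

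Next I would feed this into the explicit boundary formula, part (2) of Corollary \ref{coro:Gromov_product_boundary}, written for each of the two basepoints. For a transverse pair $F,G$, where $I(F,G)\neq 0$, the common factor $I(F,G)$ cancels and leaves
\[
\frac{\ext_{\overline{\omega}(x_0)}(F)}{\ext_{x_0}(F)}\cdot\frac{\ext_{\overline{\omega}(x_0)}(G)}{\ext_{x_0}(G)}=1.
\]
Setting $\psi(F)=\ext_{\overline{\omega}(x_0)}(F)/\ext_{x_0}(F)$, a continuous positive scaling-invariant function on $\mf-\{0\}$, this reads $\psi(F)\psi(G)=1$ whenever $I(F,G)\neq 0$. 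Choosing a pairwise transverse triple $F,G,H$ forces $\psi(F)^2=1$, hence $\psi(F)=1$; and since for every nonzero $F$ such a triple exists (the locus $\{[G]:I(F,G)=0\}$ has empty interior in $\pmf$), I conclude $\psi\equiv 1$, i.e. $\ext_{\overline{\omega}(x_0)}(\alpha)=\ext_{x_0}(\alpha)$ for all $\alpha\in\mathcal{S}$. The injectivity of the Gardiner-Masur embedding (Lemma 6.1 of \cite{GM}) then gives $\overline{\omega}(x_0)=x_0$, so $x_0$ is the desired fixed point.

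The step I expect to require the most care is the passage to the boundary in the isometry identity for the Gromov product: one must legitimately invoke Corollary \ref{coro:Gromov_product_boundary} with the moved basepoint $\overline{\omega}(x_0)$ (valid because the Corollary holds for an arbitrary basepoint) and use both that $\overline{\omega}$ extends continuously to $\cl{\Teich{g}{m}}$ and that it fixes $\pmf$ pointwise—the latter being the defining property of $\overline{\omega}=\omega\circ h_*^{-1}$. The concluding functional-equation argument is then elementary once one records the openness and density of transversality on $\pmf$, which guarantees transverse triples about every prescribed foliation and hence $\psi\equiv 1$.
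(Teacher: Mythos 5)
Your proof is correct, but it takes a genuinely different route from the paper's. The paper argues geometrically: it takes a filling pair $\alpha,\beta\in\mathcal{S}$, the Teichm\"uller disk of the quadratic differential whose horizontal and vertical foliations are $\alpha$ and $\beta$, and the stable/unstable foliations of the pseudo-Anosov $\tau_\alpha\circ\tau_\beta^{-1}$; the two geodesics in the disk joining opposite boundary points terminate at prescribed points of $\pmf\subset\partialGM{\Teich{g}{m}}$, and since $\overline{\omega}$ is an isometry fixing $\pmf$ pointwise, uniqueness of Teichm\"uller geodesics with prescribed endpoints (Theorem 1.1 of \cite{Mi3} together with Theorem 5.1 of \cite{GM}) makes each geodesic invariant, so their single intersection point $x_1$ is the fixed point. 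You instead exploit Corollary \ref{coro:Gromov_product_boundary} itself: isometry-invariance of the Gromov product, joint continuity of its extension, and $\overline{\omega}|_{\pmf}=\mathrm{id}$ give $\gromov{[F]}{[G]}{\overline{\omega}(x_0)}=\gromov{[F]}{[G]}{x_0}$, and the boundary formula (2) plus the transverse-triple cancellation force $\ext_{\overline{\omega}(x_0)}(\alpha)=\ext_{x_0}(\alpha)$ for all $\alpha\in\mathcal{S}$, whence $\overline{\omega}(x_0)=x_0$ by injectivity of the Gardiner-Masur embedding (or, even more directly, by Kerckhoff's formula, which gives $d_T(x_0,\overline{\omega}(x_0))=0$). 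Both arguments rest on the same previously established inputs about $\overline{\omega}$ (isometry, continuous extension, identity on $\pmf$), and Corollary \ref{coro:Gromov_product_boundary} is proved before \S\ref{sec:isometric_action}, so there is no circularity in your approach. Your route buys two things: it avoids the Teichm\"uller-disk and geodesic-rigidity machinery of \cite{Mi3} entirely, and, since Corollary \ref{coro:Gromov_product_boundary} holds for an arbitrary basepoint, the identical argument fixes \emph{every} point of $\Teich{g}{m}$, so it subsumes Claim \ref{claim:identity} and would let one skip that claim's separate proof via density of the Jenkins-Strebel rays; what the paper's proof buys in exchange is independence from the boundary formula for the Gromov product. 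The one step you should record explicitly is the existence of a pairwise transverse triple through any $F\in\mf-\{0\}$: pick uniquely ergodic $G,H$ in distinct projective classes, both different from $[F]$ (possible by the density cited in \S\ref{subsec:ue_in_GmInv}); unique ergodicity then forces $I(F,G)$, $I(F,H)$, $I(G,H)$ to be nonzero, which justifies the nowhere-density assertion you invoke.
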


\begin{proof}
Take $\alpha,\beta\in \mathcal{S}$ which fill up $X$.
Consider a holomorphic quadratic differential $q$
whose horizontal and vertical foliations are $\alpha$ and $\beta$
respectively (cf. \cite{HM}).
Consider the Teichm\"uller disk $\varphi:\mathbb{D}\to \Teich{g}{m}$
corresponding to the quadratic differential $q$.
It is well-known that the Teichm\"uller disk $\varphi$
is invariant under the action of a pseudo-Anosov
mapping $\tau_\alpha\circ \tau_\beta^{-1}$
where $\tau_\alpha$ and $\tau_\beta$ are
Dehn-twists along $\alpha$ and $\beta$,
respectively (cf. \cite{Thurston1}).
Let $\mu_1$ and $\mu_2$ be the stable and unstable foliations
of the pseudo-Anosov mapping.
For simplifying of the notation,
we set $\{\lambda_i\}_{i=1}^4=\{\alpha,\beta,\mu_1,\mu_2\}$,
where the equality holds as unordered sets.
Let $\theta_i\in \partial\mathbb{D}$
be the corresponding point to $\lambda_i$ via $\varphi$.
This means that the radial ray of direction $\theta_i$
terminates at the projective class of $\lambda_i\in \partialGM{\Teich{g}{m}}$
(cf. \cite{Mi3}. See also Theorem 5.1 of \cite{GM} and Lemma \ref{lem:charcterization_ue}).
We may assume that $\theta_i$ lies on $\partial \mathbb{D}$
counterclockwise.
For $i=1,2$,
let $g_i$
be the hyperbolic geodesic connecting $\theta_{i}$ and $\theta_{i+2}$
in $\mathbb{D}$.
Then,
$g_1$ and $g_2$ intersect transversely in $\mathbb{D}$,
%and for $i=1,2$,
%$\varphi(g_i)$ are Teichm\"uller geodesics
%such that the horizontal and vertical foliations of
%corresponding quadratic differential $q_i$ are
%$\lambda_{i}$ and $\lambda_{i+2}$.
%For the simplicity of discussion,
%
%Namely,
%$\{\lambda^1_1,\lambda^1_2,\lambda^2_1,\lambda^2_2\}=\{\alpha,\beta,\mu_1,\mu_2\}$.
and $\varphi(g_1)\cap \varphi(g_2)$ consists of one point,
say $x_1\in \Teich{g}{m}$  since $\varphi$ is injective.

Since each end of $g_i$ are asymptotically tangent to the radial ray
at $\partial \mathbb{D}$,
$\varphi(g_i)$ is Teichm\"uller geodesic
which terminates at the projective classes of $\lambda_i$
and $\lambda_{i+2}$ in the Gardiner-Masur compactification
(cf. \cite{LiuSu} and Proposition 4.9 in \cite{Rieffel}).
Notice from Theorem 1.1 in \cite{Mi3}
that the limits of two different Teichm\"uller rays emanating from $x_1$
are different in the Gardiner-Masur compactification.
Hence,
the horizontal and vertical foliations of
corresponding quadratic differential $q_i$ should be
$\lambda_{i}$ and $\lambda_{i+2}$ for $i=1,2$.

%By Theorem 5.1 of \cite{GM} and Lemma \ref{lem:charcterization_ue},
%%Theorem 3 of \cite{Mi3},
%in the Gardiner-Masur compactification,
%the ends of each Teichm\"uller geodesic $\varphi(g_i)$ terminate
%at the projective classes
%of $\lambda^i_1$ and $\lambda^i_2$ at infinity.
Since $\overline{\omega}$ is the identity on $\pmf$,
$\overline{\omega}(\varphi(g_i))$ is also a Teichm\"uller geodesic
terminating at the projective classes of $\lambda_i$ and $\lambda_{i+2}$.
By applying Theorem 1.1 in \cite{Mi3} as above,
we deduce that
$\overline{\omega}(\varphi(g_i))$ is the Teichm\"uller geodesic
of the holomorphic quadratic differential whose horizontal
and vertical foliations are $\lambda_i$ and $\lambda_{i+2}$.
Thus,
by Theorem 5.1 in \cite{GM},
$\overline{\omega}(\varphi(g_i))=\varphi(g_i)$ for $i=1,2$
and hence $\overline{\omega}$ fixes the intersecting point $x_1$.
\qed
\end{proof}

\begin{claim}
\label{claim:identity}
$\overline{\omega}$ is the identity
on $\Teich{g}{m}$.
\end{claim}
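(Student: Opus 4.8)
The plan is to exploit that $\overline{\omega}$ is an isometry of $(\Teich{g}{m},d_T)$ which fixes the point $x_1$ produced in Claim~\ref{claim:fixed_pt} and which restricts to the identity on $\pmf\subset\partialGM{\Teich{g}{m}}$. Since an isometry fixing $x_1$ sends every Teichm\"uller geodesic ray from $x_1$ to a Teichm\"uller geodesic ray from $x_1$ and preserves the distance from $x_1$, it suffices to show that $\overline{\omega}$ maps each such ray onto itself; every point of $\Teich{g}{m}$ lies on a ray from $x_1$ by Teichm\"uller's theorem, so this forces $\overline{\omega}=\mathrm{id}$.

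First I would treat the rays directed by uniquely ergodic foliations. For a uniquely ergodic $F\in\mf$ let $R_F$ be the Teichm\"uller ray from $x_1$ whose defining quadratic differential is directed by $F$. By Theorem~5.1 of \cite{GM} together with Lemma~\ref{lem:charcterization_ue}, the ray $R_F(t)$ converges as $t\to\infty$ to $[F]\in\pmf$ in the Gardiner-Masur compactification. As $\overline{\omega}$ extends continuously to $\cl{\Teich{g}{m}}$ and fixes $\pmf$ pointwise, the image $\overline{\omega}\circ R_F$ is again a Teichm\"uller ray from $x_1$ with the same limit $[F]$. By Theorem~1.1 of \cite{Mi3}, two distinct Teichm\"uller rays emanating from a common point have distinct limits in the Gardiner-Masur compactification; hence $\overline{\omega}\circ R_F=R_F$, and because $\overline{\omega}$ preserves the distance from $x_1$ we obtain $\overline{\omega}(R_F(t))=R_F(t)$ for every $t\ge 0$.

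To remove the unique ergodicity hypothesis I would invoke the density of uniquely ergodic foliations in $\mf$ (cf.\ \cite{FLP}) together with the continuous dependence of the ray $R_q(t)$ on its quadratic differential $q$ at $x_1$, which via the Hubbard-Masur correspondence is continuous in the directing foliation. Given any $z\in\Teich{g}{m}$, Teichm\"uller's theorem writes $z=R_{F_0}(t_0)$ for a unique ray from $x_1$; choosing uniquely ergodic $F_n\to F_0$ gives fixed points $R_{F_n}(t_0)$ converging to $R_{F_0}(t_0)=z$, so the continuity of $\overline{\omega}$ yields $\overline{\omega}(z)=z$. This completes the argument.

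I expect the main obstacle to be the rigidity input of the second paragraph, namely the identification of the limit of a uniquely ergodic ray with its class in $\pmf$ and the uniqueness of the ray with a prescribed Gardiner-Masur limit (Theorem~1.1 of \cite{Mi3}); the remainder is a routine density-and-continuity passage. A minor but necessary point is to fix the convention (vertical versus horizontal directing foliation) consistently with the convergence statement already used in the proof of Claim~\ref{claim:fixed_pt}.
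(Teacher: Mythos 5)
Your proposal is correct and follows essentially the same route as the paper: fix each Teichm\"uller ray emanating from the fixed point $x_1$ by combining its Gardiner-Masur limit (fixed pointwise by $\overline{\omega}$) with the uniqueness of rays with prescribed limit from Theorem 1.1 of \cite{Mi3}, then conclude by density and continuity. The only difference is cosmetic: the paper uses the Jenkins-Strebel rays $R_{\alpha,x_1}$ directed by simple closed curves $\alpha\in\mathcal{S}$ and simply asserts the density of their union, whereas you use rays directed by uniquely ergodic foliations (whose convergence to $[F]$ follows from Lemma \ref{lem:charcterization_ue}) and make the density step explicit via Hubbard-Masur continuity, both of which are legitimate within the paper's framework.
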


\begin{proof}
%The discussion given here is the same as that by Ivanov in \cite{Ivanov}.
%However, we include a proof for completeness.
%
As in the previous section,
for $\alpha\in \mathcal{S}$,
we denote by $R_{\alpha,x_1}\colon [0,\infty)\to \Teich{g}{m}$
the Teichm\"uller geodesic ray
which  emanates from $x_1$ and is defined by the Jenkins-Strebel differential
on $x_1$ whose vertical foliation is $\alpha$.
%Notice 
Hence,
from Theorem 1.1 in \cite{Mi3} again,
%that the limits of two different Teichm\"uller rays emanating $x_1$
%are different in the Gardiner-Masur compactification.
we have that $R_{\alpha,x_1}$ is
the only geodesic ray which emanates from $x_1$
and terminates at $[\alpha]\in \pmf\subset \partialGM{\Teich{g}{m}}$
since $\lim_{t\to \infty}R_{\alpha,x_1}(t)=[\alpha]$ by Theorem 5.1 of \cite{GM}.
Since $\overline{\omega}([\alpha])=[\alpha]$,
we deduce that $\overline{\omega}\circ R_{\alpha,x_1}=R_{\alpha,x_1}$ on $[0,\infty)$.
Since Teichm\"uller rays $\{R_{\alpha,x_1}\}_{\alpha\in \mathcal{S}}$
are dense in $\Teich{g}{m}$,
we conclude that $\overline{\omega}$ is the identity on $\Teich{g}{m}$.
\qed
\end{proof}

For closing the proof of Corollary \ref{coro:Ivanov_characterization},
we check that the extended mapping class group $\mcg^*(X)$ is isomorphic
to the isometry group ${\rm Isom}(\Teich{g}{m},d_T)$ of $(\Teich{g}{m},d_T)$.
From Lemma \ref{lem:isometric_action_extended_mapping_class_group},
there is a natural homomorphism
\begin{equation}
\label{eq:homomorphism}
\mcg^*(X)\ni h\mapsto h_*\in {\rm Isom}(\Teich{g}{m},d_T).
\end{equation}
From Claim \ref{claim:identity},
the homomorphism \eqref{eq:homomorphism} is surjective.
Let $h\in \mcg^*(X)$
and assume that $h_*=id$ on $\Teich{g}{m}$.
Then,
from Lemma \ref{lem:action_bdy},
the extension of $h_*$ to $\partialGM{\Teich{g}{m}}$
fixes $\mathcal{S}$ pointwise.
From Theorem \ref{thm:almost_isometry},
$h_*$ induces the identity automorphism of the complex of curves.
%and the automorphism is the identity.
%This means that the action of $h$ to the complex of curves is the identity.
Hence,
by Ivanov-Korkmaz-Luo's theorem,
$h$ should be the identity
from the topological assumption of $X$.
\qed

\subsection{Comments on the exceptional cases}
Suppose first that
$(g,m)=(1,2)$.
%the base surface $X$
%is a torus $X_{1,2}$ with two punctures.
It is known that the canonical homomorphism
from the extended mapping class group on $X_{1,2}$ to the isometry group 
is neither injective nor surjective.
Indeed,
by Proposition 1.3 in \cite{EK},
$\Teich{1}{2}$ admits a biholomorphic mapping
to the Teichm\"uller space $\Teich{0}{5}$ of
a sphere $X_{0,5}$ with five punctures
which is induced by the quotient mapping $X_{1,2}\to X_{0,5}$ of
the action of the hyperelliptic involution (double branched points are
considered as punctures).
Hence,
from Corollary \ref{coro:Ivanov_characterization},
the isometry group of $\Teich{1}{2}$
is isometric to the extended mapping class group
$\mcg^*(X_{0,5})$ of $X_{0,5}$
since the Teichm\"uller distance coincides with the Kobayashi distance.
Therefore,
the canonical 
homomorphism from the extended mapping class group $\mcg^*(X_{1,2})$
to the isometry group of $\Teich{1}{2}$ is not surjective
(cf. Corollary 3 in \S4.3 of \cite{EK}).
By a theorem due (independently) to Birman and Viro,
the hyperelliptic involution of $X_{1,2}$ fixes every non-trivial and non-peripheral
simple closed curves on $X_{1,2}$ (cf. \cite{Luo}).
Hence,
the hyperelliptic involution acts trivially on $\Teich{1}{2}$
and the canonical homomorphism is not injective (cf. \cite{EK}).

When
$(g,m)=(2,0)$,
%$X$ is a closed surface $X_{2,0}$ of genus two,
any automorphism of the complex of curves induces a homeomorphism on $X_{2,0}$.
However,
the hyperelliptic involution fixes every non-trivial simple closed curves on $X_{2,0}$
and hence the action of the extended mapping class group is not faithful
(cf. e.g. \S\ref{subsub:Proof_corollary_Ivanov}) .
In fact,
it is known that the hyperelliptic involution generates the kernel of the canonical homomorphism
(e.g. \cite{Luo})

\subsection{Comments on the characterization of biholomorphisms}
The problem of characterizing isometries and biholomorphisms
makes sense
for Teichm\"uller spaces of arbitrary Riemann surfaces.
In the case where the Teichm\"uller space is of infinite dimension,
Earle and Gardiner \cite{EG}
%gave the adjointness  theorem
%which asserts that any $\mathbb{C}$-isometry between the tangent spaces
%of Teichm\"uller spaces
%induces a $\mathbb{C}$-isometry of spaces of 
%holomorphic quadratic differentials
%of the corresponding Riemann surfaces.
%Earle and Gardiner also
obtained the characterization
for Riemann surfaces of topologically finite type.
In \cite{Lakic},
N. Lakic obtained the characterization for Riemann surfaces
of finite genus.
Finally,
in \cite{Markovic},
Markovic settled the characterization for biholomorphisms
of Teichm\"uller space of arbitrary Riemann surfaces.

\section{Appendix : A proper geodesic metric space without extendable Gromov product}
\label{sec:non-extendability-GromovProduct}
This section is devoted to giving a geodesic metric space on which the Gromov product
does not extend to the horofunction boundary.
The following example is given by Cormac Walsh (cf. \cite{Walsh}).
Notice that the Gardiner-Masur compactfication
coincides with the horofunction compactification
with respect to the Teichm\"uller distance (cf. \cite{LiuSu}).

Let $C_{n}$ be the frame $\partial ([-n,n]\times [0,n])$
with the standard Euclidean metric.
We construct a space $X$
by gluing each frame $C_{n}$ to $\mathbb{R}$ along
the bottom edge $[-n,n]\times \{0\}$ of $C_{n}$ and the
interval $[-n,n]$ of $\mathbb{R}$
isometrically.
The space $X$ is a proper geodesic space
(cf. Figure \ref{fig:metricspaceX}).
\begin{figure}
\begin{center}
\includegraphics[height=4cm]{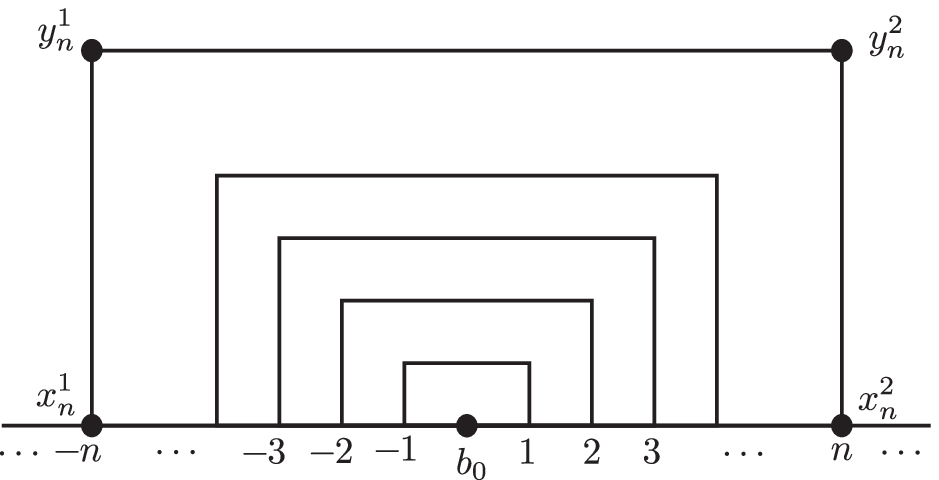}
\caption{The metric space $X$.}
\label{fig:metricspaceX}
\end{center}
\end{figure}
Let $b_{0}$,
$x^{1}_{n},y^{1}_{n},x^{2}_{n}$ and $y^{2}_{n}$ be points in $X$
corresponding to $0\in \mathbb{R}$,
$(-n,0)$,
$(-n,n)$,
$(n,0)$ and $(n,n)$ in $C_{n}$ respectively.
We consider $b_{0}$ as the basepoint of $X$.
Then,
one can see that for $i=1,2$,
$\{x^{i}_{n}\}_{n}$ and $\{y^{i}_{n}\}_{n}$
converges to the same Busemann point in the horofunction boundary of $X$
though $\{y^{i}_{n}\}_{n}$ is not an almost geodesic (cf. \cite{Rieffel}).
On the other hand,
we see
$$
\lim_{n\to \infty}
\gromov{y^{1}_{n}}{y^{2}_{n}}{b_{0}}
=\lim_{n\to \infty}\frac{1}{2}(2n+2n-2n)=\infty
$$
while $\gromov{x^{1}_{n}}{x^{2}_{n}}{b_{0}}=(n+n-2n)/2=0$ for all $n$.

\medskip
\noindent
\paragraph{{\bf Acknowledgements}}
The author thanks Professor Ken'ichi Ohshika and
Professor
Athanase Papadopoulos
for stimulating and useful conversations
and continuous encouragements.
The author would like to express his heartfelt gratitude 
to Professor Francis Bonahon for his valuable suggestions and discussions
and for his kind hospitality in the author's visit at USC.
The author thanks Professor Cormac Walsh for informing his example and
for kindly permitting to put it in this paper.
Finally,
he is also grateful to the referee for his/her careful reading
and for a number of helpful suggestions.

\begin{thebibliography}{99}
%\bibitem{Ahlfors}
%L. V. Ahlfors,
%\emph{Lectures on Quasiconformal Mappings},
%Van Nostrand Mathematical Studies, vol. {\bf 10}. 
%D. Van Nostrand Co., Inc., Toronto, Ont.-New York-London (1966) .

%\bibitem{Bers}
%L. Bers,
%An inequality for Riemann surfaces,
%Differential geometry and complex analysis,
%87--93, Springer, Berlin (1985).

%\bibitem{Birman}
%J. Birman,
%\emph{Braids, links, and mapping class groups},
%Annals of Mathematics Studies, No. {\bf 82}.
%Princeton University Press, Princeton, N.J. (1974).

\bibitem{Bonahon_bouts}
F. Bonahon,
Bouts des vari\'et\'es hyperboliques de dimension $3$,
Ann. of Math. {\bf 124},  no. 1, 71--158 (1986).

\bibitem{Bonahon_currents}
F. Bonahon,
The geometry of Teichm\"uller space via geodesic currents,
Invent. Math.  {\bf 92},  no. 1, 139--162  (1988). 

\bibitem{BriF}
M. Bridson and A. Haefliger,
\emph{Metric spaces of Non-positive curvature},
Grundlehren der mathematischen Wissenschaften {\bf 319},
Springer Verlag (1999).

\bibitem{FLP}
A. Douady,
A. Fathi,
D. Fried,
F. Laudenbach,
V. Po\'enaru,
and
M. Shub,
\emph{Travaux de Thurston sur les surfaces},
S\'eminaire Orsay (seconde \'edition).
Ast\'erisque No. 66-67,
Soci\'et\'e Math\'ematique de France, Paris
 (1991).

\bibitem{DLR}
M. Duchin, C.J. Leininger, and K. Rafi,
Length spectra and degeneration of flat metrics,
Invent. Math. {\bf 182}, 231--277 (2010).

\bibitem{EG}
C. Earle and F. Gardiner,
Geometric isomorphisms between infinite dimensional Teichm\"uller spaces,
Trans. Amer. Math. Soc. {\bf 348},
1163--1190 (1996).

\bibitem{EK}
C. Earle and I. Kra,
On isometries between Teichm\"uller spaces. 
Duke Math. J. {\bf 41}, 583--591 (1974).

\bibitem{EM}
C. Earle and V. Markovic,
Isometries between the spaces of $L^1$ holomorphic quadratic differentials on Riemann surfaces of finite type,
Duke Math. J. {\bf 120}, no. 2, 433--440 (2003). 

%\bibitem{Gar}
%F. Gardiner,
%Measured foliations and the minimal norm property for quadratic differentials,
%Acta Math.
%{\bf 152}, no. 1-2, 57--76 (1984).


\bibitem{GM}
F. Gardiner and H. Masur,
Extremal length geometry of Teichm\"uller space.
Complex Variables Theory Appl. {\bf 16}, no. 2-3, 209--237 (1991). 

\bibitem{Gromov}
M. Gromov,
Hyperbolic manifolds, groups and actions,
In \emph{Riemann surfaces and related topics},
Proceedings of the 1978 Stony Brook Conference,
182--213,
Princeton University Press  (1981).

\bibitem{HM}
J. Hubbard,
and H. Masur,
Quadratic differentials and foliations,
Acta Math.
{\bf 142}, no. 3-4, 221--274  (1979). 

\bibitem{IT}
Y. Imayoshi and M. Taniguchi,
\emph{Introduction to Teichm\"uller spaces},
Springer-Verlag (1992).

\bibitem{Ivanov0}
N. V. Ivanov,
\emph{Subgroups of Teichm\"uller modular groups},
Translations of Mathematical Monographs, {\bf 115}.
American Mathematical Society, Providence, RI (1992).
%
\bibitem{Ivanov}
N. Ivanov,
Isometries of Teichm\"uller spaces from the point of view of Mostow rigidity,
\emph{Topology, Ergodic Theory, Real Algebraic Geometry}
(eds.  Turaev, V., Vershik, A.),
pp. 131--149,
Amer. Math. Soc. Transl. Ser. 2, Vol {\bf 202},
American Mathematical Society (2001)

\bibitem{Ker}
S. Kerckhoff,
The asymptotic geometry of Teichm\"uller space,
Topology {\bf 19}, 23--41 (1980).

\bibitem{Korkmaz}
M. Korkmaz,
Automorphisms of complexes of curves on
punctured spheres and punctured tori,
Topology and its Applications {\bf 95},
85--111 (1999).

\bibitem{Lakic}
N. Lakic,
An isometry theorem for quadratic differentials
on Riemann surfaces of finite genus,
Trans. Amer. Math. Soc. {\bf 349},
2951--2967 (1997).

\bibitem{LiuSu}
L. Liu and W. Su,
The horofunction compactification of Teichm\"uller metric,
to appear in \emph{Handbook of Teichm\"uller theory}
(A. Papadopoulos, ed.),
Volume IV,
EMS Publishing House, Z\"urich (2012).

\bibitem{Luo}
F. Luo,
Automorphisms of the complexes and curves,
Topology {\bf 39},
283--298 (2000).

\bibitem{Markovic}
V. Markovic,
Biholomorphic maps between Teichm\"uller spaces,
Duke Math. {\bf 120},
405--431 (2003).

%\bibitem{MS3}
%A. Marden, and K. Strebel,
%The height theorem for quadratic differentials
%on Riemann surfaces,
%Acta. Math {\bf 153},
%153--211 (1984).

\bibitem{Masur0}
H. Masur,
On a class of geodesics in Teichm\"uller space.
Ann. of Math. {\bf 102},
205--221 (1975). 

\bibitem{Masur2}
H. Masur,
Interval exchange transformations and measured foliations,
Ann. of Math. {\bf 115},
169--200 (1982).

\bibitem{MasurSurvey}
H, Masur,
Geometry of Teichm\"uller space with the Teichm\"uller metric,
\emph{Surveys in differential geometry} Vol. {\bf XIV}.
\emph{Geometry of Riemann surfaces and their moduli spaces},
295--313.  Int. Press, Somerville, MA, (2009).

\bibitem{MW2}
H. Masur and M. Wolf,
Teichm\"uller space is not Gromov hyperbolic,
Ann. Acad. Sci. Fenn. Math. {\bf 20}, 259--267 (1995).

\bibitem{MW}
H. Masur and M. Wolf,
The Weil-Petersson Isometry Group,
Geom. Dedicata {\bf 93}, 177--190 (2002).

\bibitem{MP}
J. McCarthy and A. Papadopoulos,
The visual sphere of Teichm\"uller space and a theorem of Masur-Wolf,
Ann. Acad. Sci. Fenn. Math. {\bf 24}, 147--154 (1999).

\bibitem{Mi0}
H. Miyachi,
On the Gardiner-Masur boundary of Teichm\"uller spaces
Proceedings of the 15th ICFIDCAA Osaka 2007, OCAMI Studies 2, 295--300 (2008).

\bibitem{Mi1}
H. Miyachi,
Teichm\"uller rays and the Gardiner-Masur boundary of Teichm\"uller space.
Geom. Dedicata {\bf 137}, 113--141 (2008). 

\bibitem{Mi3}
H. Miyachi,
Teichm\"uller rays and the Gardiner-Masur boundary of Teichm\"uller space II,
Geom. Dedicata
{\bf 162}, 283--304 (2013).

\bibitem{Mi4}
H. Miyachi,
Lipschitz algebra and compactifications of Teichm\"uller space,
to appear in Handbook of Teichm\"uller theory,
Vol IV,
European Math. Society, Z\"urich (2012).

\bibitem{Minsky2}
Y. Minsky,
Teichm\"uller geodesics and ends of hyperbolic 3-manifolds. 
Topology {\bf 32},
625--647
(1993).

\bibitem{Rees}
M. Rees,
An alternative approach to the ergodic theory of measured foliations on surfaces.
Ergodic Theory Dynamical Systems  {\bf 1}  (1981), no. 4, 461--488 (1982).

\bibitem{Rieffel}
M. Rieffel,
Group $C^*$-algebra as compact quantum metric spaces,
Doc. Math. {\bf 7},
605--651 (2002).

\bibitem{Royden}
H. Royden,
Automorphisms and isometries of Teichm\"uller space,
\emph{Advances in the Theory of Riemann Surfaces} (Proc. Conf., Stony Brook, N.Y., 1969),
Ann. of Math. Studies, No. {\bf 66}. Princeton Univ. Press, Princeton, N.J.,
pp. 369--383 (1971).

\bibitem{Thurston1}
W.P. Thurston,
On the geometry and dynamics of diffeomorphisms of surfaces,
Bull. Amer. Math. Soc. {\bf 19}, 417-431 (1988). 

%\bibitem{Thurston_book}
%W.P. Thurston,
%\emph{Three-dimensional Geometry and Topology} Vol $1$,
%Princeton Univ. Press,
%Princeten, New Jersey (1997).

\bibitem{Veech}
W. Veech,
Gauss measures for transformations on the space of interval exchange maps,
Ann of Math. {\bf 115}, 201--242 (1982).

\bibitem{Walsh}
C. Walsh,
private communication (2012).
\end{thebibliography}
\end{document}